\documentclass[reqno]{amsart}
\usepackage{CJK}
\usepackage{hyperref}
\usepackage{cite}
\usepackage{amsmath}
\usepackage{mathrsfs}
\usepackage{bm}
\usepackage{graphicx}
\usepackage{epsfig}
\usepackage{epstopdf}

\textheight 20.0cm \textwidth 14.0cm
\numberwithin{equation}{section}

\newtheorem{theorem}{Theorem}[section]
\newtheorem{lemma}[theorem]{Lemma}
\newtheorem{definition}[theorem]{Definition}

\newtheorem{proposition}[theorem]{Proposition}
\newtheorem{remark}[theorem]{Remark}
\newtheorem{corollary}[theorem]{Corollary}

\allowdisplaybreaks

\makeatletter
\@namedef{subjclassname@2020}{%
	\textup{2020} Mathematics Subject Classification}
\makeatother

\newcommand{ \mint }{ {\int\hspace{-0.38cm}-}}

\begin{document}
	
\title[\hfil Harnack inequality for doubly nonlinear parabolic equations] {Harnack inequality for doubly nonlinear mixed local and nonlocal parabolic equations}

\author[V.D. R\u adulescu, B. Shang, C. Zhang \hfil \hfilneg]
{Vicen\c tiu D. R\u adulescu, Bin Shang, Chao Zhang$^*$}

\thanks{ORCID: 0000-0003-4615-5537 (Vicen\c{t}iu D. R\u{a}dulescu), 0000-0003-2702-2050 (Chao Zhang)} 

\thanks{$^*$Corresponding author.}

\address{Vicen$\c{t}$iu D. R\u{a}dulescu \hfill\break 
	Faculty of Applied Mathematics, AGH University of Krak\'ow, 30-059, Krak\'ow, Poland \&	Department of Mathematics, University of Craiova, Street A.I. Cuza 13, 200585 Craiova, Romania} \email{radulescu@inf.ucv.ro}

\address{Bin Shang \hfill\break
	School of Mathematics, Harbin Institute of Technology,
	Harbin 150001, P.R. China \&
	Department of Mathematics, University of Craiova, Street A.I. Cuza 13, 200585 Craiova, Romania
} \email{shangbin0521@163.com}

\address{Chao Zhang\hfill\break
	School of Mathematics and Institute for Advanced Study in Mathematics, Harbin Institute of Technology,
	Harbin 150001, P.R. China} \email{czhangmath@hit.edu.cn}

\subjclass[2020]{35K67, 35B45, 35B65, 35K65, 35K92}
\keywords{Expansion of positivity; Harnack inequality; Doubly nonlinear parabolic equations}

\maketitle

\begin{abstract}
In this paper, we establish the Harnack inequality of nonnegative weak solutions to the doubly nonlinear mixed local and nonlocal parabolic equations. This result is obtained by combining  a related comparison principle, a local boundedness estimate, and an integral Harnack-type inequality. Our proof is based on the expansion of positivity together with a comparison argument. 
\end{abstract}

\section{Introduction}
\label{sec1}
\par

Let $E_T:=E\times(0,T)$ be an open bounded set $E\subset \mathbb{R}^N$ and $T>0$. In this paper, we discuss the Harnack-type estimate for nonnegative weak solutions to the following doubly nonlinear parabolic equation 
\begin{align}
\label{1.1}
\partial_t \left(|u|^{q-1}u\right)-\mathrm{div}(|\nabla u|^{p-2}\nabla u)+\mathcal{L}u=0 \quad \text{in } E_T,
\end{align}
where $p>1$, $q>0$ and the operator $\mathcal{L}$ is given by
\begin{align}
\label{1.2}
\mathcal{L} u(x,t)=\mathrm{P.V.} \int_{\mathbb{R}^N} K(x,y,t)|u(x,t)-u(y,t)|^{p-2}(u(x,t)-u(y,t))\,dy.
\end{align}
Here, P.V. means the Cauchy principal value and $K(x, y, t):\mathbb{R}^N\times\mathbb{R}^N\times (0,T]\rightarrow [0,\infty)$ is the symmetric kernel function satisfying
\begin{align*}
\frac{\Lambda^{-1}}{|x-y|^{N+sp}} \leq K(x,y,t)\equiv K(y,x,t) \leq \frac{\Lambda}{|x-y|^{N+sp}}\quad \mathrm{a.e. }\ x,y\in\mathbb{R}^N
\end{align*}
for some $\Lambda \geq 1$ and $s \in(0,1)$. 

\smallskip
To state the definition of weak solutions, we denote by $W_0^{1,p}(E)$ the Sobolev space with zero boundary values, namely 
\begin{align*}
W_0^{1,p}(E):=\left\{u \in W^{1,p}(E): u=0 \text { in } \mathbb{R}^N \backslash E \right\}.
\end{align*}
Moreover, we introduce the tail space
\begin{align*}
L_\alpha^m(\mathbb{R}^{N}):=\left\{v \in L_{\rm{loc}}^m(\mathbb{R}^{N}): \int_{\mathbb{R}^N} \frac{|v(x)|^m}{1+|x|^{N+\alpha}}\,dx<+\infty\right\}, \quad m>0 \text{ and } \alpha>0.
\end{align*}
The parabolic nonlocal tail is of the form
\begin{align*}
\mathrm{Tail}_\infty(v; x_0, R; t_0-S, t_0):=\operatorname*{ess\,\sup}_{t_0-S <t<t_0}\left(R^p \int_{\mathbb{R}^{N} \backslash B_R(x_0)} \frac{|v(x, t)|^{p-1}}{|x-x_0|^{N+sp}}\,dx\right)^{\frac{1}{p-1}}.
\end{align*}
Note that $\mathrm{Tail}_\infty(v;x_0,R;t_0-S,t_0)$ is well-defined for any $v\in L^\infty(t_0-S,t_0;L_{sp}^{p-1}(\mathbb{R}^N))$.

Eq. \eqref{1.1} can be seen as a mixed version of the doubly nonlinear parabolic equation
\begin{align}
\label{1.4}
\partial_t \left(|u|^{q-1}u\right)-\mathrm{div}(|\nabla u|^{p-2}\nabla u)=0 \quad \text{in } E_T,
\end{align}
which attracts lots of interest both in light of its mathematical structure and its significance to describe many physical phenomena including shallow water flows \cite{ASD08} and glacier dynamics \cite{M76}, see also \cite{V06, LM18} for other classical applications. It is well-known that Eq. \eqref{1.4} covers Trudinger's equation $(q=p-1)$, the evolutionary $p$-Laplace equation $(q=1)$, and the porous medium equation $(p=2)$. There have been fruitful achievements on Eq. \eqref{1.4}, such as the existence of solutions \cite{BDMS18, MN23}, H\"{o}lder regularity \cite{BDL21, V92}, higher integrability \cite{BDKS20, MSS23}, and Harnack type estimate \cite{V94,BDG23, BHS21, VV22}. We refer the readers to \cite{ GV06, KK07, MN23-1, M23} and reference therein for more related results.

For the doubly nonlinear nonlocal parabolic equation
\begin{align}
\label{1.5}
\partial_t \left(|u|^{q-1}u\right)+(-\Delta)_p^s u=0 \quad \text{in } E_T,
\end{align}
the pointwise behavior of weak solutions was shown in \cite{BGK23} with $p>1$ and $q>0$. For what concerns the nonlocal Trudinger equation, Banerjee-Garain-Kinnunen \cite{BGK22} investigated the local boundedness of weak subsolutions by De Giorgi's method. Meanwhile, they also provide a crucial algebraic inequality to deal with the nonlocal term in obtaining a reverse H\"{o}lder inequality for positive weak supersolutions under the assumption that $p>2$. In particular, a weak Harnack inequality for globally bounded positive weak solutions to the nonlocal Trudinger equation was derived by Prasad \cite{P24}. Taking into account the fractional $p$-Laplace parabolic equation, the local boundedness $(p>1)$ and the H\"{o}lder regularity $(p>2)$ for local weak solutions were explored in \cite{DZZ21}. Moreover, \cite{L24} generalized the regularity results to the region $1<p<\infty$ by means of the intrinsic scaling method, but avoids using any comparison principle. Further results can be found in \cite{S19, V16, BLS21}.

Regarding the homogeneous scenario of \eqref{1.1}, i.e., $q=p-1$, the local boundedness of sign-changing weak solutions for $p\geq 2$ was achieved by Nakamura \cite{N22}. Later, the author considered Harnack's inequality for globally bounded positive weak solutions through some quantitative estimates in \cite{N23}. When $q=1$, Eq. \eqref{1.1} becomes the mixed local and nonlocal parabolic $p$-Laplace equation
\begin{align}
\label{1.6}
\partial_t u-\mathrm{div}(|\nabla u|^{p-2}\nabla u)+\mathcal{L}u=0 \quad \text{in } E_T,
\end{align}
some regularity properties of sign-changing weak solutions involving the local boundedness, the semicontinuity, and the pointwise behavior were discussed in \cite{GK24}. By employing the expansion of positivity, Shang-Zhang studied Harnack's estimate and the H\"{o}lder continuity for weak solutions to \eqref{1.6} in \cite{SZ23} and \cite{SZ22}, respectively. Very recently, Adimurthi-Prasad-Tewary \cite{APT23} developed the $C^{1, \alpha}$ regularity of weak solutions to \eqref{1.6} by suitable comparison estimates. 

To the best of our knowledge, there are no results concerning Eq. \eqref{1.1} for the general case $q\neq p-1$. Motivated by the works \cite{BDG23, N23}, our purpose in this paper is to study the Harnack inequality of globally bounded nonnegative weak solutions to problem \eqref{1.1} under the assumptions that  $0<p-1<q<p^2-1$ and $p>N$. Different from the homogeneous case, the solutions to Eq. \eqref{1.1} in the fast diffusion range can not be scaled by multiplying any scale factor. Exactly for this, we adopt the approach called the expansion of positivity and choose suitable geometries to overcome the non-homogeneity. In addition, we remark that the assumptions proposed on $p$ and $q$ address another difficulty stemming from the nonlocal feature. To investigate the Harnack inequality, we also establish the local boundedness, a comparison principle, and an integral-type Harnack inequality as byproducts.

\smallskip

We now state the notion of weak solutions to problem \eqref{1.1}.

\begin{definition}
We identify a function $u$ is a weak subsolution (super-) to the doubly nonlinear mixed local and nonlocal parabolic equation \eqref{1.1} if 
\begin{align*}
u\in C(0,T; L^{q+1}(E))\cap L^p(0,T;W^{1,p}(E))\cap L^\infty(0,T;L_{sp}^{p-1}(\mathbb{R}^N))
\end{align*}
such that there holds the integral inequality
\begin{align}
\label{1.7}
\iint_{E_T}-|u|^{q-1} u \partial_t \varphi+|\nabla u|^{p-2}\nabla u \cdot \nabla \varphi\, dxdt+\int_{0}^{T} \mathcal{E}(u, \varphi, t)\,dt \leq (\geq) 0
\end{align}
for all nonnegative test functions $\varphi \in W_0^{1, q+1}(0,T;L^{q+1}(E))\cap L^p(0,T;W_0^{1,p}(E))$, where
\begin{align*}
\mathcal{E}(u,\varphi,t):=\int_{\mathbb{R}^N}\int_{\mathbb{R}^N} K(x,y,t)|u(x,t)-u(y,t)|^{p-2}(u(x,t)-u(y,t))(\varphi(x,t)-\varphi(y,t))\,dydx.
\end{align*}
A function $u$ is called a weak solution to \eqref{1.1} if it is both a weak subsolution and a weak supersolution.
\end{definition}

At this stage, we present our results as follows. The first one is about the local boundedness of weak subsolutions.
\begin{theorem}
\label{thm-1-2}
Let $0<p-1<q$ and let $r\geq 1$ such that 
\begin{align}
\label{1.8}
\lambda_r:=N(p-q-1)+rp>0.
\end{align}
Suppose that $u$ is a nonnegative, weak subsolution to \eqref{1.1}. For $r>m:=p\frac{N+q+1}{N}$, we further assume that $u$ is qualitatively locally bounded. Let $Q_{\rho,s}=K_\rho(x_0)\times(t_0-s,t_0] \subset\subset E_T$. Then there holds
\begin{align*}
\underset{Q_{\frac{1}{2} \rho, \frac{1}{2} s}}{\operatorname{ess} \sup } \ u \leq & \gamma\left(\frac{\rho^p}{s}\right)^{\frac{N}{\lambda_r}}\left(\mint\!\!\mint_{Q_{\rho,s}} u^r\,dxdt\right)^{\frac{p}{\lambda_r}}+\gamma\left(\frac{s}{\rho^p}\right)^{\frac{1}{q+1-p}}\\
&+\gamma\left(\frac{s}{\rho^p}[\mathrm{Tail}_\infty(u; x_0, \rho/2; t_0-s, t_0)]^{p-1}\right)^{\frac{1}{q}},
\end{align*}
where $\gamma$ is a constant depending only on $N,p,s,q,\Lambda$.	
\end{theorem}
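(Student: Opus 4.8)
The plan is to run a De Giorgi iteration on a sequence of shrinking cylinders, but with the time-scaling chosen \emph{intrinsically} to match the non-homogeneous structure of the doubly nonlinear equation. First I would fix the target cylinder $Q_{\frac12\rho,\frac12 s}$ and introduce, for $n=0,1,2,\dots$, the radii $\rho_n=\frac12\rho(1+2^{-n})$ and the time levels $s_n=\frac12 s(1+2^{-n})$, together with increasing levels $k_n=k(1-2^{-n})$ for a parameter $k>0$ to be chosen at the end; set $Q_n:=K_{\rho_n}(x_0)\times(t_0-s_n,t_0]$ and $(u-k_n)_+$ as the truncated function. For the energy estimate I would test \eqref{1.7} with $\varphi=(u-k_n)_+\zeta^p$ for a cutoff $\zeta$ that is $1$ on $Q_{n+1}$, vanishes near the parabolic boundary of $Q_n$, and satisfies $|\nabla\zeta|\lesssim 2^n/\rho$, $|\partial_t\zeta|\lesssim 2^n/s$. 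The parabolic term is handled by the standard algebraic lemma on $\partial_t(|u|^{q-1}u)\cdot(u-k_n)_+$, which produces a quantity comparable to $(u-k_n)_+^{q+1}$ in the $L^\infty_t L^{q+1}_x$ norm; the local diffusion term gives the gradient energy $\iint |\nabla (u-k_n)_+|^p\zeta^p$ as usual.

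The nonlocal term $\int \mathcal E(u,\varphi,t)\,dt$ is the first genuine obstacle. Splitting $\mathbb R^N\times\mathbb R^N$ into the near-diagonal region $(B_{\rho_n})^2$ and the two tail regions, the near part is absorbed into the gradient energy up to a fractional seminorm with a favorable sign (this is where the monotonicity of $t\mapsto |t|^{p-2}t$ is used), while the tail contributions are estimated by
\[
\Big(\frac{\rho_n}{\rho_{n+1}-\rho_n}\Big)^{N+sp}\!\!\!\int_{Q_n}(u-k_n)_+\zeta^p\,dxdt\cdot\big[\mathrm{Tail}_\infty(u;x_0,\tfrac\rho2;t_0-s,t_0)\big]^{p-1}\lesssim \frac{2^{n(N+sp)}}{\rho^{sp}}\,[\mathrm{Tail}_\infty]^{p-1}\!\!\int_{Q_n}(u-k_n)_+\,dxdt,
\]
which, after the reabsorption argument, is what forces the third term in the asserted bound (together with the restriction $q<p^2-1$ that keeps the exponent $\frac1q$ in range when one trades powers). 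The second term $(s/\rho^p)^{1/(q+1-p)}$ will appear from balancing the $\partial_t\zeta$ contribution: since $|u|^{q-1}u\,\partial_t\zeta^p$ gives a factor $s^{-1}\iint (u-k_n)_+^{q+1}$, which is not scale-invariant, one is forced to absorb a term proportional to $k^{q+1-p}s/\rho^p$, and smallness of that term is exactly $k\gtrsim (s/\rho^p)^{1/(q+1-p)}$.

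Next I would convert the energy estimate into a nonlinear recursion. Applying the parabolic Sobolev embedding (the Gagliardo–Nirenberg inequality interpolating $L^\infty_t L^{q+1}_x$ with $L^p_t W^{1,p}_x$, here using $p>N$ so the exponents work out and $\lambda_r>0$ controls the resulting power) on the set $\{u>k_n\}\cap Q_n$, one obtains
\[
\Yint\longdash_{Q_{n+1}}(u-k_{n+1})_+^{\,r}\,dxdt\;\le\; C\,b^{\,n}\,\Big(\frac{s}{\rho^p}\Big)^{\alpha}\,k^{-\beta}\Big(\Yint\longdash_{Q_n}(u-k_n)_+^{\,r}\,dxdt\Big)^{1+\kappa}
\]
for explicit $b>1$, $\kappa>0$ and exponents $\alpha,\beta>0$ determined by $N,p,q,r$ through $\lambda_r$. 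Here I would be careful to carry the three inhomogeneous terms (the Tail term, the $(s/\rho^p)$ term, and the main integral) through the iteration separately, or equivalently to add them to $k$ from the start so that the recursion is clean. By the standard fast-geometric-convergence lemma, $Y_n\to 0$ provided $Y_0=\Yint\longdash_{Q_\rho,s}u^r\le C^{-1/\kappa}b^{-1/\kappa^2}$ times the appropriate power of $k$; choosing
\[
k=\gamma\Big(\frac{\rho^p}{s}\Big)^{\frac N{\lambda_r}}\Big(\Yint\longdash_{Q_{\rho,s}}u^r\,dxdt\Big)^{\frac p{\lambda_r}}+\gamma\Big(\frac{s}{\rho^p}\Big)^{\frac1{q+1-p}}+\gamma\Big(\frac{s}{\rho^p}[\mathrm{Tail}_\infty]^{p-1}\Big)^{\frac1q}
\]
makes $Y_n\to0$, which yields $\operatorname*{ess\,sup}_{Q_{\rho/2,s/2}}u\le k$, i.e.\ the claimed inequality.

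The main obstacle I anticipate is twofold: first, getting the power of $s/\rho^p$ in the recursion to be exactly $N/\lambda_r$ after the embedding — this requires bookkeeping the non-scale-invariance of the doubly nonlinear term with the intrinsic choice of cylinder, and is where the hypothesis $\lambda_r>0$ is essential; second, handling the borderline case $r>m=p\frac{N+q+1}{N}$, where the qualitative local boundedness hypothesis is needed to make the interpolation legitimate (one first proves the estimate for $r\le m$ and then bootstraps upward using $u\in L^\infty_{\mathrm{loc}}$ to interpolate $L^r$ against $L^m$). The nonlocal tail term, by contrast, is by now fairly routine once the algebraic splitting above is in place, since the constraint $p>N$ and $q<p^2-1$ are precisely what keep all exponents positive.
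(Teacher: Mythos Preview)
Your approach is essentially the paper's: Caccioppoli estimate with cutoff on nested cylinders, increasing levels $k_n$, parabolic Sobolev embedding (Lemma~\ref{lem-2-7}) to produce a nonlinear recursion in $Y_n$, then fast geometric convergence with $k$ chosen as the sum of the three terms in the statement. Two corrections are in order.

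First, you repeatedly invoke hypotheses that are \emph{not} part of Theorem~\ref{thm-1-2}. Neither $p>N$ nor $q<p^2-1$ is assumed here; those enter only later (Theorems~\ref{thm-1-5} and~\ref{thm-1-6}). The parabolic embedding $L^\infty_t L^{q+1}_x\cap L^p_t W^{1,p}_x\hookrightarrow L^m_{t,x}$ with $m=p\tfrac{N+q+1}{N}$ holds for all $p>1$, and the exponent $1/q$ on the Tail term arises simply from demanding $k^q\gtrsim (s/\rho^p)[\mathrm{Tail}_\infty]^{p-1}$, with no restriction on $q$ beyond $q>p-1$. Drop those references.

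Second, and more substantively, your recursion as written presumes $r\ge q+1$. The H\"older steps that convert $\iint(u-k_j)_+^{q+1}$, $\iint(u-k_j)_+^p$, and $\iint(u-k_j)_+$ on the right-hand side of the energy estimate into powers of $\iint(u-k_j)_+^r$ (with the correct negative powers of $k$) require $r\ge q+1\ge p$. The paper therefore splits into three cases: (i) $q+1\le r\le m$, where your scheme runs directly; (ii) $r<q+1$ (hence $r<m$), where one first applies case~(i) with exponent $q+1$ in place of $r$, writes $\iint u^{q+1}\le M_1^{\,q+1-r}\iint u^r$, and closes via Young's inequality plus the iteration Lemma~\ref{lem-2-5}; (iii) $r>m$, where one inserts $\|u\|_\infty^{r-m}$ to pass from $L^r$ to $L^m$ and again closes with Lemma~\ref{lem-2-5}. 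You allude to~(iii) but not to~(ii); without it the range $1\le r<q+1$ is not covered.
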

\begin{remark}
\label{rem-1-3}
We will discuss the local boundedness result in three cases. The first two cases care about the exponent $r\leq m$, in which $u\in L_{\rm{loc}}^m(E_T)$ can be ensured by Sobolev embedding in Lemma \ref{lem-2-7}. For $r>m$, we further assume that the weak subsolutions of \eqref{1.1} are locally bounded because Lemma \ref{lem-2-7} is generally not true. Notice that there exists some $r\geq 1$ satisfying $r\leq m$ and $\lambda_r>0$, if and only if $\lambda_m>0$. Owing to $$\lambda_m=(N+p)(m-q-1)=\frac{N+p}{N}\lambda_{q+1},$$ $\lambda_m>0$ asking for $m>q+1$ or $\lambda_{q+1}>0$, which is equal to
\begin{align*}
q<\frac{N(p-1)+p}{(N-p)_+}.
\end{align*}
Consequently, we can apply Theorem \ref{thm-1-2} for $r=q+1$ without assuming a prior that $u$ is locally bounded in this case.
\end{remark}

From Remark \ref{rem-1-3}, we get a corollary of Theorem \ref{thm-1-2}.
\begin{corollary}
\label{cor-1-4}
Let $0<p-1<q<\frac{N(p-1)+p}{(N-p)_+}$. Then we know that every nonnegative weak subsolution to \eqref{1.1} is locally bounded.
\end{corollary}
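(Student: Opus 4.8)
The plan is to deduce Corollary \ref{cor-1-4} directly from Theorem \ref{thm-1-2} by specializing to the exponent $r=q+1$. First I would check that under the hypothesis $0<p-1<q<\frac{N(p-1)+p}{(N-p)_+}$, the choice $r=q+1$ is admissible for Theorem \ref{thm-1-2}: one needs $r\geq 1$, which follows since $q>p-1>0$ forces $q+1>p>1$ (note $p>1$ is a standing assumption on \eqref{1.1}); and one needs $\lambda_r=\lambda_{q+1}=N(p-q-1)+(q+1)p>0$. A short computation gives $\lambda_{q+1}=Np-N(q+1)+(q+1)p=Np+(q+1)(p-N)$, and the condition $\lambda_{q+1}>0$ rearranges exactly to $q+1<\frac{Np}{(N-p)_+}+1=\frac{N(p-1)+p}{(N-p)_+}$ when $N>p$ (and is automatic when $N\le p$, in which case $(N-p)_+=0$ and the upper bound is $+\infty$). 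This is precisely the hypothesis assumed, so $\lambda_{q+1}>0$ holds.

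Next I would invoke Remark \ref{rem-1-3}: since $r=q+1$ satisfies $r\leq m$ — indeed $m=p\frac{N+q+1}{N}$ and $m-(q+1)=\frac{p(N+q+1)-N(q+1)}{N}=\frac{pN+(q+1)(p-N)}{N}=\frac{\lambda_{q+1}}{N}>0$ — we are in the regime where Theorem \ref{thm-1-2} applies without any a priori qualitative local boundedness assumption; the membership $u\in L^{q+1}_{\mathrm{loc}}(E_T)$ needed to make the right-hand side finite is guaranteed by the definition of weak subsolution (which requires $u\in C(0,T;L^{q+1}(E))$), and in fact the Sobolev-type embedding in Lemma \ref{lem-2-7} gives $u\in L^m_{\mathrm{loc}}$. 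Thus for every cylinder $Q_{\rho,s}=K_\rho(x_0)\times(t_0-s,t_0]\subset\subset E_T$ the conclusion of Theorem \ref{thm-1-2} with $r=q+1$ reads
\begin{align*}
\operatorname*{ess\,sup}_{Q_{\rho/2,s/2}} u \leq \gamma\Big(\frac{\rho^p}{s}\Big)^{\frac{N}{\lambda_{q+1}}}\Big(\mint\!\!\mint_{Q_{\rho,s}} u^{q+1}\,dx\,dt\Big)^{\frac{p}{\lambda_{q+1}}}+\gamma\Big(\frac{s}{\rho^p}\Big)^{\frac{1}{q+1-p}}+\gamma\Big(\frac{s}{\rho^p}[\mathrm{Tail}_\infty(u;x_0,\rho/2;t_0-s,t_0)]^{p-1}\Big)^{\frac{1}{q}},
\end{align*}
and every term on the right is finite: the integral term because $u\in L^{q+1}_{\mathrm{loc}}$, the second term trivially (note $q+1-p>0$ since $q>p-1$), and the tail term because $u\in L^\infty(0,T;L^{p-1}_{sp}(\mathbb{R}^N))$ by the definition of weak solution, which is exactly the condition under which $\mathrm{Tail}_\infty$ was observed to be well-defined. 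Hence $u$ is essentially bounded on $Q_{\rho/2,s/2}$.

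Finally, since every point of $E_T$ lies in the interior of some such cylinder $Q_{\rho,s}\subset\subset E_T$, and the bound above holds on the corresponding $Q_{\rho/2,s/2}$, a standard covering argument shows $u\in L^\infty_{\mathrm{loc}}(E_T)$, which is the assertion of the corollary. In truth there is no genuine obstacle here — the content is entirely in Theorem \ref{thm-1-2} and Remark \ref{rem-1-3}; the only point requiring care is the algebraic verification that the stated range of $q$ is exactly equivalent to $\lambda_{q+1}>0$ (equivalently $m>q+1$), together with checking $r=q+1\ge 1$, so that the theorem may be applied with the self-improving choice $r=q+1$ and no circular boundedness hypothesis is needed.
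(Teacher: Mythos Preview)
Your proposal is correct and follows essentially the same approach as the paper: the corollary is deduced directly from Theorem \ref{thm-1-2} with $r=q+1$, using the observations of Remark \ref{rem-1-3} that this choice satisfies $r\ge 1$, $\lambda_{q+1}>0$, and $q+1\le m$, so no a priori boundedness is needed. (There is a harmless slip in your intermediate algebra---the displayed identity ``$\frac{Np}{(N-p)_+}+1=\frac{N(p-1)+p}{(N-p)_+}$'' is not right; the correct chain is $q+1<\frac{Np}{(N-p)_+}$, hence $q<\frac{Np}{(N-p)_+}-1=\frac{N(p-1)+p}{(N-p)_+}$---but your conclusion is correct.)
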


Based on the boundedness result, we can derive the Harnack inequality in an integral-type, which is a key ingredient to obtain the Harnack inequality.

\begin{theorem}
\label{thm-1-5}
Let $0<p-1<q<\min\left\{p^2-1,\frac{N(p-1)}{(N-p)_{+}}\right\}$ and $\lambda_q:=N(p-1-q)+q p>0$. Suppose that $u$ is a nonnegative, weak solution to \eqref{1.1}, and the cylinder $Q_{\rho,s}=K_\rho(x_0)\times(t_0-s,t_0]\subset E_T$ with $\rho\in(0,1]$. Then there holds
\begin{align*}
\operatorname*{ess \sup}_{Q_{\frac{1}{2} \rho, \frac{1}{2} s}} u \leq &\gamma\left(\frac{\rho^p}{s}\right)^{\frac{N}{\lambda q}}\left(\inf _{t \in\left[t_0-s,t_0\right]} \mint_{K_\rho(x_0) \times\{t\}} u^q \,dx \right)^{\frac{p}{\lambda_q}}+\gamma\left(\frac{s}{\rho^p}\right)^{\frac{1}{q+1-p}}\\
&+\gamma\left(\frac{s}{\rho^p}[\mathrm{Tail}_\infty(u;x_0,\rho/2;t_0-s,t_0)]^{p-1}\right)^{\frac{1}{q}}\\
&+\gamma \left(\frac{s}{\rho^p}\right)^{\frac{p-N}{\lambda_q}}\left[\mathrm{Tail}_\infty(u;x_0,\rho/2;t_0-s,t_0)\right]^{\frac{p(p-1)}{\lambda_q}}
\end{align*}
with constant $\gamma$ depending only on $N, p, s, q, \Lambda$.
\end{theorem}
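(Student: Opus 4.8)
The plan is to upgrade the local boundedness estimate of Theorem~\ref{thm-1-2} into one controlled by an \emph{infimal} spatial average of $u^q$, which is the natural quantity appearing in an integral Harnack inequality. First I would invoke Theorem~\ref{thm-1-2} with the choice $r = q$; the hypothesis $q < \frac{N(p-1)}{(N-p)_+}$ is precisely what makes $\lambda_q = \lambda_r > 0$ (note $\lambda_q$ here is $N(p-1-q)+qp$, matching \eqref{1.8} with $r=q$), and since $q < p^2 - 1$ forces $q < m$ one does not need the a priori local boundedness assumption. This yields
\begin{align*}
\operatorname*{ess\,sup}_{Q_{\frac12\rho,\frac12 s}} u \le \gamma\Big(\frac{\rho^p}{s}\Big)^{\frac{N}{\lambda_q}}\Big(\mint\!\!\mint_{Q_{\rho,s}} u^q\,dxdt\Big)^{\frac{p}{\lambda_q}} + \gamma\Big(\frac{s}{\rho^p}\Big)^{\frac{1}{q+1-p}} + \gamma\Big(\frac{s}{\rho^p}[\mathrm{Tail}_\infty(u)]^{p-1}\Big)^{\frac{1}{q}}.
\end{align*}
The remaining task is to replace the full space--time average $\mint\!\!\mint_{Q_{\rho,s}} u^q$ by $\inf_{t\in[t_0-s,t_0]}\mint_{K_\rho(x_0)\times\{t\}} u^q\,dx$ up to the extra tail correction term displayed in the statement.

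The key step is therefore a \emph{time-propagation of the spatial $L^q$-average}: one shows that if the spatial average $\int_{K_\rho}u^q\,dx$ is large at some time level $t_1$, it cannot collapse too quickly, so that $\mint\!\!\mint_{Q_{\rho,s}}u^q$ is bounded by a constant times its infimum in time plus a tail term. To do this I would test the weak formulation \eqref{1.7} with a test function of the form $\varphi = \psi(x)^p\,\zeta(t)$, where $\psi$ is a standard cutoff that is $1$ on $K_{(1-\sigma)\rho}$ and supported in $K_\rho$, and $\zeta$ is a Lipschitz time-cutoff selecting an interval $[\tau_1,\tau_2]$. Integrating the parabolic term gives a difference of spatial $L^{q+1}$-type energies; the local divergence term contributes the usual $-\gamma\sigma^{-p}\rho^{-p}\iint u^p$ after a Young-inequality absorption; and the nonlocal term $\mathcal{E}(u,\varphi,t)$ splits, as in the mixed-equation literature (cf.\ \cite{SZ23,GK24}), into a \emph{local part} over $K_\rho\times K_\rho$ controlled again by $\rho^{-p}\iint u^p$, and a \emph{tail part} over $K_\rho\times(\mathbb{R}^N\setminus K_{(1-\sigma)\rho})$ bounded by $\gamma\sigma^{-(N+sp)}\frac{s}{\rho^p}[\mathrm{Tail}_\infty(u)]^{p-1}$ times a spatial mass of $u$. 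Combining these, for $t_0-s<\tau_1<\tau_2\le t_0$ one obtains an inequality of the schematic form
\begin{align*}
\int_{K_{(1-\sigma)\rho}\times\{\tau_2\}}\!\!\! u^{q+1}\,dx \;\le\; \int_{K_\rho\times\{\tau_1\}}\!\!\! u^{q+1}\,dx + \gamma\,\sigma^{-(N+sp)}\Big(\rho^{-p}\!\!\iint_{Q_{\rho,s}}\!\! u^p + \rho^{N}[\mathrm{Tail}_\infty(u)]^{p-1}\Big),
\end{align*}
and similarly with the roles reversed. Using $\operatorname*{ess\,sup}_{Q_{\frac12\rho,\frac12 s}}u =: M$ to bound $u^{q+1}\le M\,u^{q}$ and $u^p \le M^{p-q}\,u^q$ (legitimate since $q<p$ and $u$ is now known bounded), these convert into a comparison between $\mint_{K_{\rho}\times\{t\}}u^q$ at different times with a multiplicative constant depending only on $\sigma$ and additive tail terms; optimizing the geometric constants over a De~Giorgi--type iteration in $\sigma$ then yields
\begin{align*}
\mint\!\!\mint_{Q_{\rho,s}} u^q\,dxdt \;\le\; \gamma \inf_{t\in[t_0-s,t_0]} \mint_{K_\rho(x_0)\times\{t\}} u^q\,dx + \gamma\Big(\frac{s}{\rho^p}\Big)[\mathrm{Tail}_\infty(u)]^{p-1}\big/M^{\,p-q} \cdot(\text{absorbable}) + \dots,
\end{align*}
the precise bookkeeping producing exactly the fourth term $\gamma(s/\rho^p)^{(p-N)/\lambda_q}[\mathrm{Tail}_\infty(u)]^{p(p-1)/\lambda_q}$ after one substitutes back into the boundedness estimate and balances powers of $M$ via Young's inequality.

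The main obstacle I anticipate is the non-homogeneity of the doubly nonlinear operator together with the fast-diffusion range $q>p-1$: unlike the case $q=p-1$ or $q=1$, one cannot rescale $u\mapsto \lambda u$, so the energy comparison must be carried out with carefully chosen intrinsic cylinders and all constants must be tracked through the Young-inequality absorptions without ever multiplying $u$ by a free parameter. Concretely, when passing from $u^{q+1}$ and $u^p$ back to $u^q$ one introduces powers of $M=\operatorname*{ess\,sup}u$, and closing the argument requires that the exponents satisfy $q < p^2-1$ — this is exactly where that hypothesis is consumed — so that the resulting inequality in $M$ can be solved (via a standard iteration/Young's inequality lemma) rather than merely estimated. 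A secondary technical point is justifying the test function $\varphi=\psi^p\zeta$ in \eqref{1.7} despite it not lying a priori in the stated test-function class; this is handled by a Steklov-averaging in time, exactly as in \cite{SZ23,GK24}, and I would cite that reduction rather than reproduce it.
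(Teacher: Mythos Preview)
Your overall strategy---apply Theorem~\ref{thm-1-2} with $r=q$ and then replace the space--time average of $u^q$ by the time-infimum of the spatial average---matches the paper's. The gap is in the second step.

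Testing the weak formulation with $\varphi=\psi(x)^p\zeta(t)$ does \emph{not} give what you describe. Since the equation has $\partial_t(u^q)$, the parabolic term produces a difference of $\int u^q\psi^p\,dx$ at two time levels (not $u^{q+1}$). More importantly, the local divergence term becomes $p\!\iint |\nabla u|^{p-2}\nabla u\cdot\psi^{p-1}\nabla\psi$, which is of order $\rho^{-1}\!\iint|\nabla u|^{p-1}$; this is \emph{not} bounded by $\rho^{-p}\!\iint u^p$, and there is nothing on the left-hand side into which a Young-inequality remainder $\varepsilon\!\iint|\nabla u|^p\psi^p$ could be absorbed. Your subsequent conversion ``$u^p\le M^{p-q}u^q$ since $q<p$'' is also false in general: the range $p-1<q<p^2-1$ allows $q>p$ (e.g.\ $p=2$, $q=5/2$).

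The paper closes this gap via a Moser-type test function with a \emph{negative} power of $u$: one tests with $t^{1/p}(u+\kappa)^{-(q+1-p)/p}\zeta^p$, where $\kappa=(s/\rho^p)^{1/(q+1-p)}$. The gradient term then produces a positive quantity $\iint|\nabla u|^p(u+\kappa)^{-(q+1)/p}t^{1/p}\zeta^p$ on the good side (Lemma~\ref{lem-5-3}); by H\"older this controls $\iint|\nabla u|^{p-1}$ in terms of $\sup_t\int u^q$ (Lemma~\ref{lem-5-4}). For the nonlocal term the same test function requires the algebraic inequality of Lemma~\ref{lem-2-3}, and this is precisely where the hypothesis $q<p^2-1$ enters (it ensures $0<(q+1-p)/p<p-1$). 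Only after these gradient bounds are available does one test with the bare cutoff $\zeta$ to compare $\int u^q$ at different times, and an iteration over nested cubes yields Proposition~\ref{pro-5-1}. Your proposal skips the essential weighted-gradient estimate, without which the $|\nabla u|^{p-1}$ term cannot be handled.
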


The last theorem is our main result regarding the Harnack inequality of nonnegative weak solutions while we additionally assume that the weak solutions are globally bounded.
\begin{theorem}
\label{thm-1-6}
Let $p>N$, $0<p-1<q<p^2-1$ and let $\rho\in(0,1]$. Suppose that $u\in L^\infty(\mathbb {R}^N\times(0,T))$ is a nonnegative, continuous, weak solution to \eqref{1.1}, and $u(x_0,t_0)>0$. There exist constants $\gamma>1$ and $\sigma \in(0,1)$ depending only on $N, p, s, q, \Lambda, \|u\|_{L^\infty(\mathbb{R}^N\times(0,T))}$, such that for any
\begin{align*}
(x, t) \in K_{\rho}\left(x_0\right) \times\left(t_0-\sigma\left[u\left(x_0, t_0\right)\right]^{q+1-p} \rho^p, t_0+\sigma\left[u\left(x_0, t_0\right)\right]^{q+1-p} \rho^p\right),
\end{align*}
we have
\begin{align*}
\gamma^{-1} u\left(x_0, t_0\right) \leq u(x, t) \leq \gamma u\left(x_0, t_0\right),
\end{align*}
provided 
\begin{align*}
K_{8 \rho}\left(x_0\right) \times\left(t_0-\gamma[u(x_0,t_0)]^{q+1-p}(8 \rho)^p, t_0+\gamma[u(x_0,t_0)]^{q+1-p}(8 \rho)^p\right) \subset E_T.
\end{align*}
\end{theorem}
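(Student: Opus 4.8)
I would prove the two inequalities of Theorem~\ref{thm-1-6} separately, establishing the upper one first since it feeds into the lower one. After a translation we may take $(x_0,t_0)=(0,0)$ and write $M:=u(0,0)>0$; using that $v(x,\tau):=M^{-1}u(x,M^{q+1-p}\tau)$ solves the same equation with $\|v\|_{L^\infty}=\|u\|_{L^\infty}/M$, we may further normalise $M=1$, so that the intrinsic cylinders to be used have time-length proportional to $\rho^p$ with $\rho\le1$. Here $q>p-1$ enters, making $q+1-p>0$ the correct sign, and global boundedness of $u$ makes every nonlocal tail appearing below at most a constant multiple of $\|u\|_{L^\infty}$, since $\rho\le1$. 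Feeding an intrinsic cylinder $K_{c\rho}\times(-\sigma(c\rho)^p,0]$ (with $c$ a fixed fraction, and a finite covering of $K_\rho$ when $\rho$ is close to $1$) into Theorem~\ref{thm-1-5}, and choosing $\sigma\in(0,1)$ small depending on the data, every term on the right-hand side is under control, which yields a constant $\gamma_0=\gamma_0(N,p,s,q,\Lambda,\|u\|_{L^\infty})$ with
\[ u\le\gamma_0\quad\text{on}\quad K_{2\rho}\times\big(-\sigma(2\rho)^p,0\big]; \]
sliding the reference time gives the same bound on the whole cylinder $K_{2\rho}\times(-\sigma(2\rho)^p,\sigma(2\rho)^p]$, which in particular settles the upper half of the Harnack inequality.

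For the lower bound I would first localise the positivity at scale $\rho$ via a critical-radius device. For $r\in[0,\rho)$ put $M_r:=\sup_{\overline{K_r}\times\{0\}}u$, which is finite and continuous in $r$, and compare it with $\Phi(r):=(1-r/\rho)^{-b}$ for a fixed exponent $b=b(N,p,q)>0$. Since $M_0=1=\Phi(0)$, $\Phi(r)\to\infty$ as $r\to\rho^-$, and $M_r\le\gamma_0$, there is a largest $r^\ast\in[0,\rho)$ with $M_{r^\ast}=\Phi(r^\ast)$, and $\Phi(r^\ast)\le\gamma_0$ forces $1-r^\ast/\rho\ge2c_0$ with $2c_0:=\gamma_0^{-1/b}\in(0,1)$. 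Set $\bar\rho:=\tfrac12(\rho-r^\ast)\ge c_0\rho$ and pick $\bar x\in\overline{K_{r^\ast}}$ with $u(\bar x,0)=M_{r^\ast}=:\bar M\ge1$; since $K_{2\bar\rho}(\bar x)\subset K_{r^\ast+\bar\rho}$ we obtain
\[ \sup_{K_{2\bar\rho}(\bar x)\times\{0\}}u\ \le\ \Phi(r^\ast+\bar\rho)\ \le\ 2^b\,\Phi(r^\ast)\ =\ 2^b\bar M . \]
The crucial gain is that $\bar\rho\ge c_0\rho$ is comparable to $\rho$: this is what will keep the number of expansion steps below bounded.

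On the intrinsic cylinder $Q:=K_{\bar\rho}(\bar x)\times(-c_1\bar M^{q+1-p}\bar\rho^p,0]$, with $c_1$ chosen small by a time-oscillation iteration so that $\sup_Q u$ stays comparable to $\bar M$, I would read Theorem~\ref{thm-1-2} in reverse: $\bar M=u(\bar x,0)$ is at most the supremum over the sub-cylinder $\tfrac12 Q$, which Theorem~\ref{thm-1-2} bounds by a constant times a power of the mean of $u^r$ over $Q$ plus lower-order terms that are small once $\sigma$ is small; hence the mean of $u^r$ over $Q$ is bounded below, which forces constants $\kappa,\nu\in(0,1)$, depending only on $N,p,s,q,\Lambda,\|u\|_{L^\infty}$, with
\[ \big|\{(x,t)\in Q:\ u(x,t)>\kappa\bar M\}\big|\ \ge\ \nu\,|Q| . \]
Now the expansion of positivity --- the engine of the paper --- converts this measure-density bound for the supersolution $u$ into a pointwise bound $u\ge\eta_0\bar M$ on a full slice $K_{\bar\rho}(\bar x)\times\{t^\sharp\}$ with $t^\sharp\in(-c_1\bar M^{q+1-p}\bar\rho^p,0)$, and then, by its forward form which doubles the spatial radius at each application, after boundedly many ($\sim\log_2 c_0^{-1}$) steps into
\[ u\ \ge\ \eta\,\bar M\ \ge\ \eta\quad\text{on}\quad K_{4\rho}\times\big(t^\sharp,\ t^\sharp+c_2(4\rho)^p\big) , \]
with $\eta,c_2$ depending only on $N,p,s,q,\Lambda,\|u\|_{L^\infty}$. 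Shrinking $\sigma$ once more so that $(-\sigma\rho^p,\sigma\rho^p)\subset(t^\sharp,t^\sharp+c_2(4\rho)^p)$ and using $K_\rho\subset K_{4\rho}$, we conclude $u\ge\eta=\eta\,u(x_0,t_0)$ on the asserted cylinder; taking $\gamma:=\max\{\gamma_0,\eta^{-1}\}$ and undoing the normalisation completes the proof.

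The hardest part will be this middle step: passing from the measure-density estimate to the pointwise lower bound and then controlling the iterated expansion of positivity. What complicates it relative to the purely local case is twofold. First, the nonlocal operator $\mathcal L$: each De Giorgi step must absorb a contribution of $\mathrm{Tail}_\infty$, and it is exactly the combination of global boundedness of $u$, $\rho\le1$, and the structural range $p-1<q<p^2-1$ --- which forces the several intrinsic time-scalings (of the measure lemma, of the forward expansion, and of the comparison with the purely local doubly nonlinear equation) to be mutually compatible --- that keeps these tail terms subordinate; the comparison principle is used to carry the positivity across the interaction of the $p$-Laplacian with $\mathcal L$ and, where needed, to reach the past part of the asserted cylinder. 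Second, the non-homogeneity: with no global scaling available when $q\ne p-1$, every cylinder in the argument must be intrinsic, of time-length proportional to $(\text{amplitude})^{q+1-p}(\text{radius})^p$, and one must check at each step that the amplitude used there is consistent with the bound actually proved on that cylinder. This bookkeeping, together with the a priori control $\rho-r^\ast\ge2c_0\rho$ on the critical radius --- which caps the number of expansion steps and so prevents $\eta$ from degenerating --- is what substitutes for the scaling invariance of the homogeneous equation.
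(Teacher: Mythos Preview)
Your proposal takes a genuinely different route from the paper's, and there is a real gap in the order you have chosen.

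\textbf{The gap.} You try to prove the \emph{upper} Harnack estimate first, directly from Theorem~\ref{thm-1-5}, and then exploit the resulting bound $v\le\gamma_0$ to force $1-r^\ast/\rho\ge\gamma_0^{-1/b}$, capping the number of expansion-of-positivity steps. But Theorem~\ref{thm-1-5} bounds $\sup u$ by quantities involving $\inf_t\mint_{K_\rho}u^q\,dx$ and the tail; with no lower Harnack available yet, the only way to control these is by $\|u\|_{L^\infty}$, which after your normalisation $M=1$ becomes $\|v\|_{L^\infty}=\|u\|_{L^\infty}/u(x_0,t_0)$. So your $\gamma_0$, hence $c_0$, hence the number of iterations and the final constant $\eta$, all depend on $u(x_0,t_0)$, which Theorem~\ref{thm-1-6} forbids. (Your remark that ``choosing $\sigma$ small'' tames the right-hand side is also wrong for the leading term: $(\rho^p/s)^{N/\lambda_q}$ blows up as $\sigma\downarrow0$.) In the paper the order is reversed: the lower bound is proved first, and the upper bound then follows from it by the standard continuity argument (see the last paragraph of Section~\ref{sec7}).

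\textbf{What the paper does instead.} The paper's critical-radius argument is set up so that no lower bound on $r=c^{1/p}(1-\tau_\ast)$ is assumed; $r$ may be arbitrarily small. After Lemma~\ref{lem-7-3} one has $v\ge\eta(1-\tau_\ast)^{-\beta}$ on the tiny cube $K_{2r}(\bar x)$, and the passage from $K_{2r}$ to the unit cube is \emph{not} done by iterating Proposition~\ref{pro-6-1}. It is done by the comparison argument of \S7.4: one solves the Cauchy--Dirichlet problem \eqref{7.10} for $w$ on $K_4(\bar x)$ with zero lateral data and initial datum $w^q(\cdot,-\sigma)=\eta(1-\tau_\ast)^{-N}\chi_{K_{2r}(\bar x)}$, uses Proposition~\ref{pro-3-4} to get $w\le v$, and then analyses $w$ via Proposition~\ref{pro-5-1} and Theorem~\ref{thm-1-5}. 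The point is that the $L^q$-mass of this initial datum equals $\eta(1-\tau_\ast)^{-N}(4r)^N=2^N\eta\,c^{N/p}$, a universal constant independent of $\tau_\ast$ (this is where $p>N$, hence $\beta q>N$, is used). Iterating expansion of positivity would instead lose a fixed factor $\eta$ at each of $\sim\log_2(1/r)$ steps, and since $r$ is unbounded below, the resulting lower bound would degenerate. The comparison device is precisely what the paper introduces to bypass this; your outline mentions the comparison principle only in passing and for a different purpose, so the central mechanism of the proof is missing from your plan.
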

\begin{figure}[htbp] 
	\centering 
	\includegraphics[width=0.6\textwidth]{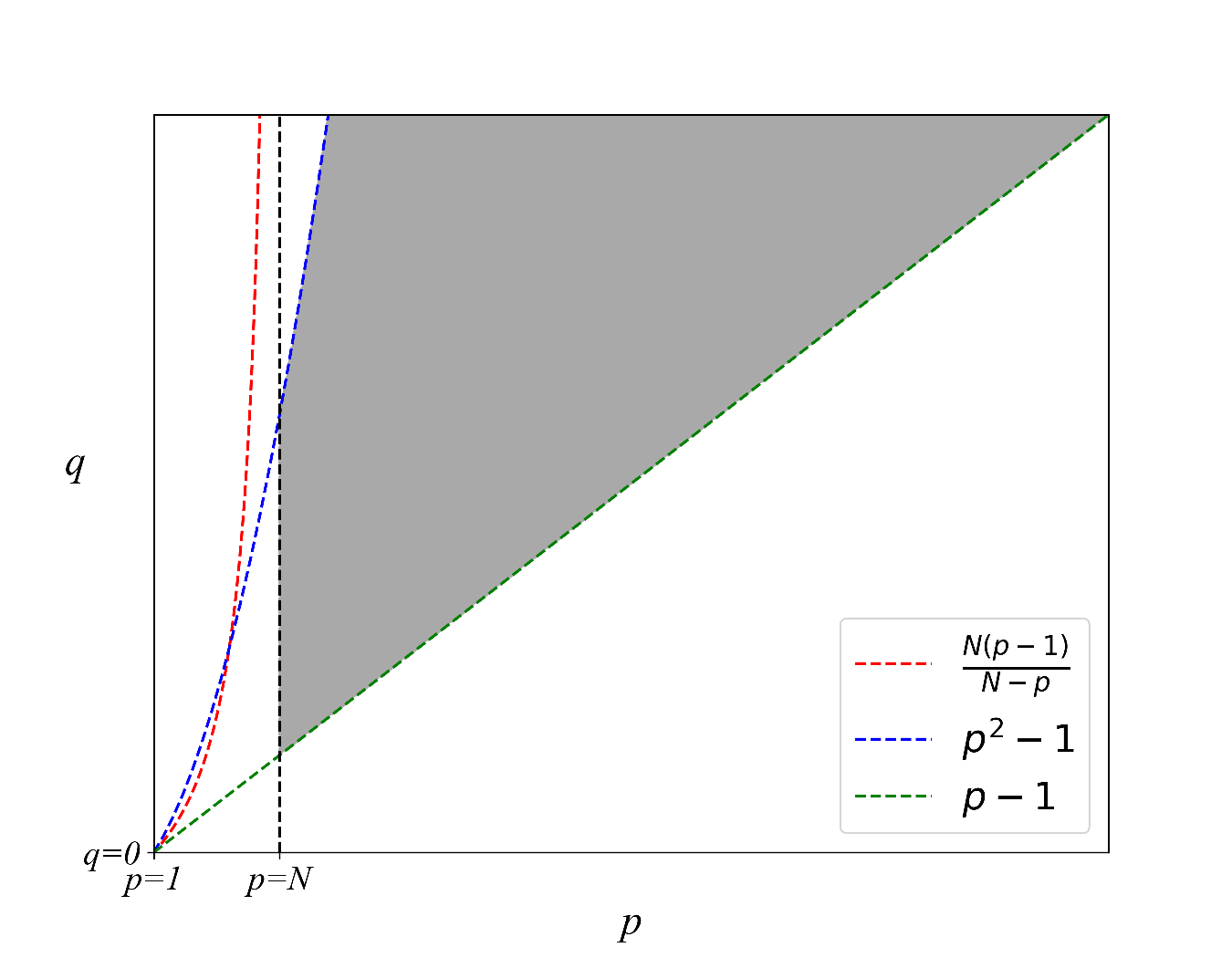} 
	\caption{} 
	\label{Fig} 
\end{figure}

The range where Theorem \ref{thm-1-6} holds is shown in Figure \ref{Fig}. The Harnack inequality we established above distinguishes from the normal parabolic Harnack inequality in two aspects. First, we obtain the pointwise information in an intrinsic cylinder since we perform a specific scaling of the equation. Second, the effect of time in this Harnack inequality is weakened, so that it presents a ``elliptic" feature.

\begin{remark}
\label{rem-1-7}
In the statement of Theorem \ref{thm-1-6}, we assume that $u$ is a continuous function for giving a clear sense to $u(x_0,t_0)$. In fact, this property can be proved by using De Giorgi-type Lemma \ref{lem-6-2} in the forthcoming context together with Theorem 2.1 in \cite{L21}, and the detailed proof can be found in \cite{BGK23}.
\end{remark}

This paper is organized as follows. In Section \ref{sec2}, we display some basic notations and give several preliminary materials. Section \ref{sec3} is devoted to deriving a comparison principle. In Section \ref{sec4}, we will provide the Caccioppoli-type inequality first, and then discuss the local boundedness result Theorem \ref{thm-1-2}. The integral-type Harnack inequality Theorem \ref{thm-1-5} will be proved in Section \ref{sec5}. In Section \ref{sec6}, we will develop the expansion of positivity of weak solutions. Finally, we complete the proof of main result (Theorem \ref{thm-1-6}) in Section \ref{sec7}.


\section{Preliminaries}
\label{sec2}

\subsection{Notation}
First, we collect some notations used throughout the paper. We shall denote $K_\rho(x_0)$ is a cube centered at $x_0\in\mathbb{R}^N$, whose side length $2\rho>0$, and whose faces are parallel to the coordinate planes in $\mathbb{R}^N$. As is customary, we write the general backward parabolic cylinders as
\begin{align*}
(x_0, t_0)+Q_{R,S}:=K_R(x_0) \times\left(t_0-S,t_0\right].
\end{align*}
We will omit $(x_0,t_0)$ if the context is clear or $(x_0,t_0)=(0,0)$.

For fixed $k\in \mathbb{R}$, define
\begin{align*}
(u-k)_+=\max\{u-k,0\} \quad \textmd{and} \quad  (u-k)_-=\max\{-(u-k),0\}.
\end{align*} 
For a function $u$ defined in $E$ and a real number $l$, we denote
$$[u>l]=\{x\in E:u(x)>l\}.$$
We also use the shorthand notations
\begin{align*}
d\mu=d\mu(x,y,t)=K(x,y,t)\,dxdy 
\end{align*}
and
\begin{align*}
U(x,y,t):=|u(x,t)-u(y,t)|^{p-2}(u(x,t)-u(y,t)).
\end{align*}
We denote by $\gamma$ some generic constants, that may vary from each other even in the same line.
\subsection{Technical lemmas}
We give an algebraic inequality, see Lemma 2.2 in \cite{AF89} for $0<\alpha<1$ and inequality (2.4) in \cite{GM86} for $\alpha>1$.

\begin{lemma}
\label{lem-2-1}
For every $\alpha>0$, there is a constant $\gamma$ depending only on $\alpha$ such that
\begin{align*}
\frac{1}{\gamma}\left|| b|^{\alpha-1} b-|a|^{\alpha-1} a\right|\leq(|a|+|b|)^{\alpha-1}
|b-a|\leq \gamma\left||b|^{\alpha-1} b-|a|^{\alpha-1} a \right|
\end{align*}
for all $a,b\in\mathbb{R}$.
\end{lemma}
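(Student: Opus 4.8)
\textbf{Proof proposal for Lemma \ref{lem-2-1}.}

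The plan is to reduce the two-sided estimate to the single scalar function $f(t)=|t|^{\alpha-1}t$ and to exploit homogeneity to normalize one of the two variables. First I would dispose of the trivial cases: if $a=b$ both sides vanish, and if exactly one of $a,b$ is zero, say $a=0$, then $\left||b|^{\alpha-1}b-|a|^{\alpha-1}a\right|=|b|^\alpha=(|a|+|b|)^{\alpha-1}|b-a|$, so the inequality holds with $\gamma=1$. Assuming now $a\ne b$ and both nonzero, note that both the middle and right quantities are positively homogeneous of degree $\alpha$ in $(a,b)$; dividing through by $|a|^\alpha$ and writing $x=b/a$ (and then, if necessary, replacing $(a,b)$ by $(-a,-b)$, which changes nothing since $f$ is odd and the expressions are even under a global sign flip), it suffices to prove
\begin{align*}
\frac{1}{\gamma}\,\bigl|\,|x|^{\alpha-1}x-1\,\bigr|\le (1+|x|)^{\alpha-1}|x-1|\le \gamma\,\bigl|\,|x|^{\alpha-1}x-1\,\bigr|
\end{align*}
for all real $x$, with $\gamma$ depending only on $\alpha$.

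Next I would establish this one-variable statement by a compactness-plus-asymptotics argument rather than by grinding out explicit constants. Set $g(x)=\dfrac{|x|^{\alpha-1}x-1}{(1+|x|)^{\alpha-1}(x-1)}$ for $x\ne 1$; this is continuous on $\mathbb{R}\setminus\{1\}$, and one checks by L'Hôpital (or by differentiating $f$) that $g(x)\to \dfrac{\alpha}{2^{\alpha-1}}$ as $x\to 1$, so $g$ extends continuously across $x=1$ with a positive value there. As $x\to\pm\infty$ one has $|x|^{\alpha-1}x-1\sim |x|^{\alpha-1}x$ and $(1+|x|)^{\alpha-1}(x-1)\sim |x|^{\alpha-1}x$, hence $g(x)\to 1$; and at $x=0$, $g(0)=\dfrac{-1}{-1}=1$. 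Finally $g$ never vanishes on $\mathbb{R}$: the numerator $|x|^{\alpha-1}x-1$ vanishes only at $x=1$, which has been removed. Therefore $g$ is a continuous, strictly positive function on the two-point compactification $[-\infty,+\infty]$, so it attains a positive minimum $c_1>0$ and a finite maximum $c_2<\infty$; taking $\gamma=\max\{1/c_1,\,c_2\}$ gives $c_1\le g(x)\le c_2$ for all $x$, which is exactly the desired two-sided bound. Unwinding the normalization recovers the stated inequality for all $a,b\in\mathbb{R}$.

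The only genuinely delicate point is the behavior near $x=1$ (equivalently $a=b$), where numerator and denominator both vanish to first order; here one must verify that the ratio of derivatives, $\alpha/2^{\alpha-1}$, is a finite nonzero number, which it is for every $\alpha>0$. Everything else is continuity and elementary asymptotics. If one prefers a self-contained elementary route avoiding compactness, the same conclusion follows from the Fundamental Theorem of Calculus applied to $f$: for $0<\alpha\le 1$ one uses that $|f'(t)|=\alpha|t|^{\alpha-1}$ is decreasing in $|t|$ to bound $|f(b)-f(a)|=\bigl|\int_a^b f'\bigr|$ between $\alpha$ times $\min(|a|,|b|)^{\alpha-1}$ and $\max(|a|,|b|)^{\alpha-1}$ times $|b-a|$, and compares these with $(|a|+|b|)^{\alpha-1}|b-a|$; for $\alpha>1$ the monotonicity of $|f'|$ is reversed and the same bookkeeping applies, with an extra factor absorbed into $\gamma$ to handle the case of opposite signs (where the integration path crosses $0$). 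I would present the compactness argument as the main proof and relegate this alternative to a remark, since it is exactly the content of the references \cite{AF89,GM86} cited in the statement.
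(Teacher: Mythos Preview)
Your proof is correct. The reduction by homogeneity and odd symmetry to the one-variable ratio
\[
g(x)=\frac{|x|^{\alpha-1}x-1}{(1+|x|)^{\alpha-1}(x-1)}
\]
is legitimate, and your verification that $g$ extends to a continuous, strictly positive function on the two-point compactification $[-\infty,+\infty]$ (with value $\alpha/2^{\alpha-1}$ at $x=1$ and limit $1$ at both infinities) is complete; compactness then delivers the constant $\gamma$ depending only on $\alpha$.

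For comparison: the paper does not prove this lemma at all. It merely states the inequality and cites \cite{AF89} for $0<\alpha<1$ and \cite{GM86} for $\alpha>1$, treating the two ranges separately. Your compactness argument has the advantage of handling all $\alpha>0$ in a single stroke and is entirely self-contained, so it actually improves on what the paper offers. The alternative you sketch via the Fundamental Theorem of Calculus is indeed closer in spirit to those references; one small slip there is that for $0<\alpha\le1$ and same-sign $a,b$ the integration of $\alpha|t|^{\alpha-1}$ gives bounds $\alpha\max(|a|,|b|)^{\alpha-1}|b-a|\le|f(b)-f(a)|\le\alpha\min(|a|,|b|)^{\alpha-1}|b-a|$ (the $\min$ and $\max$ are interchanged relative to what you wrote, since the exponent $\alpha-1$ is negative). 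This is harmless for a remark and does not affect your main argument.
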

To work with the term involving the time derivative, we shall use auxiliary functions $\bm{h}_\pm$ defined as
\begin{align}
\label{2.1}
\bm{h}_\pm(w,k):=\pm q \int_k^w|s|^{q-1}(s-k)_{\pm}\,ds,
\end{align}
for $k,w\in\mathbb{R}$ and $q>0$. It is easy to check that $\bm{h}_\pm(w,k)\geq 0$. We also write
\begin{align*}
\bm{h}(w,k):= q \int_k^w|s|^{q-1}(s-k)\,ds.
\end{align*}

The next lemma can be deduced with the help of Lemma \ref{lem-2-1}.

\begin{lemma} [Lemma 2.2, \cite{BDL21}]
\label{lem-2-2}
Let $q>0$. There exists a constant $\gamma=\gamma(q)>0$ such that for all $a,b\in\mathbb{R}$, there holds
\begin{align*}
\frac{1}{\gamma}(|a|+|b|)^{q-1}|a-b|^2 \leq \bm{h}(a,b) \leq \gamma(|a|+|b|)^{q-1}|a-b|^2
\end{align*}
and
\begin{align*}
\frac{1}{\gamma}(|a|+|b|)^{q-1}(a-b)_{ \pm}^2 \leq \bm{h}_{ \pm}(a, b) \leq \gamma(|a|+|b|)^{q-1}(a-b)_{ \pm}^2.
\end{align*}
\end{lemma}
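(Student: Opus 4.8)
The plan is to reduce both two–sided bounds to the algebraic inequality in Lemma \ref{lem-2-1} applied with the exponent $\alpha=\tfrac{q+1}{2}$. First I would record the identity
\begin{align*}
\bm{h}(a,b)=q\int_b^a|s|^{q-1}(s-b)\,ds=\int_b^a\bigl(|s|^{q-1}s\bigr)'\cdot(s-b)\,ds,
\end{align*}
and then integrate by parts (or simply differentiate the candidate primitive) to obtain the closed form
\begin{align*}
\bm{h}(a,b)=\bigl(|a|^{q-1}a-|b|^{q-1}b\bigr)(a-b)-\bm{H}(a,b),\qquad \bm{H}(a,b):=\int_b^a\bigl(|s|^{q-1}s-|b|^{q-1}b\bigr)\,ds,
\end{align*}
the point being that $\bm{H}(a,b)\geq 0$ whenever $a\neq b$ (the integrand has the sign of $a-b$ on the segment from $b$ to $a$, since $s\mapsto|s|^{q-1}s$ is increasing), and moreover $0\le \bm{H}(a,b)\le \bigl(|a|^{q-1}a-|b|^{q-1}b\bigr)(a-b)$ because the integrand is bounded in absolute value by $|{|a|^{q-1}a-|b|^{q-1}b}|$. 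Hence
\begin{align*}
\tfrac12\bigl(|a|^{q-1}a-|b|^{q-1}b\bigr)(a-b)\le \bm{h}(a,b)\le \bigl(|a|^{q-1}a-|b|^{q-1}b\bigr)(a-b),
\end{align*}
so it suffices to estimate the product $\bigl(|a|^{q-1}a-|b|^{q-1}b\bigr)(a-b)$ above and below by $(|a|+|b|)^{q-1}(a-b)^2$.

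For that last step I would write $|a|^{q-1}a-|b|^{q-1}b=\bigl(|a|^{\frac{q-1}{2}}a^{?}\bigr)$—more precisely, set $\alpha=\tfrac{q+1}{2}>0$ so that $|s|^{q-1}s=|s|^{2\alpha-2}s$ and apply Lemma \ref{lem-2-1} \emph{twice}: once with exponent $q$ directly, which already gives
\begin{align*}
\tfrac1\gamma\,|{|a|^{q-1}a-|b|^{q-1}b}|\le (|a|+|b|)^{q-1}|a-b|\le \gamma\,|{|a|^{q-1}a-|b|^{q-1}b}|.
\end{align*}
Multiplying through by $|a-b|$ converts this into the desired two–sided bound on $\bigl(|a|^{q-1}a-|b|^{q-1}b\bigr)(a-b)$ by $(|a|+|b|)^{q-1}(a-b)^2$ (using that the product $\bigl(|a|^{q-1}a-|b|^{q-1}b\bigr)(a-b)$ is nonnegative and equals $|{|a|^{q-1}a-|b|^{q-1}b}|\,|a-b|$, since $s\mapsto|s|^{q-1}s$ is monotone). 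Combining with the previous display yields
\begin{align*}
\tfrac{1}{2\gamma}(|a|+|b|)^{q-1}(a-b)^2\le \bm{h}(a,b)\le \gamma(|a|+|b|)^{q-1}(a-b)^2,
\end{align*}
which is the first assertion after renaming the constant.

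For the second assertion, involving $\bm{h}_\pm(w,k)$, I would treat the two signs separately. For $\bm{h}_+(a,b)=q\int_b^a|s|^{q-1}(s-b)_+\,ds$: if $a\le b$ the integrand vanishes on the relevant range and $(a-b)_+=0$, so both sides are zero and there is nothing to prove; if $a>b$ then $(s-b)_+=(s-b)$ on $[b,a]$ and $(a-b)_+=(a-b)$, so $\bm{h}_+(a,b)=\bm{h}(a,b)$ and the bound follows from the first part. The case $\bm{h}_-(a,b)=-q\int_b^a|s|^{q-1}(s-b)_-\,ds$ is symmetric: it is nonzero only when $a<b$, in which case $\bm{h}_-(a,b)=\bm{h}(a,b)$ and $(a-b)_-=b-a=|a-b|$, again reducing to the first part. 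So in all cases $\bm{h}_\pm(a,b)$ coincides with $\bm{h}(a,b)$ on the set where the right-hand side $(|a|+|b|)^{q-1}(a-b)_\pm^2$ is nonzero, and both vanish otherwise; hence the same constant $\gamma$ works. The only mildly delicate point—hardly an obstacle—is handling the integration-by-parts representation when $a$ and $b$ have opposite signs (so the path of integration crosses $0$): there $s\mapsto|s|^{q-1}s$ is still $C^0$ and piecewise $C^1$ with a locally integrable, nonnegative derivative $q|s|^{q-1}$ for $q\ge 1$, while for $0<q<1$ it is merely absolutely continuous on compact intervals, which is enough to justify the fundamental theorem of calculus used above; alternatively one bypasses this entirely by differentiating the claimed closed form for $\bm{h}$ in $a$ and checking it agrees at $a=b$.
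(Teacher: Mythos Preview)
The paper does not give its own proof of this lemma; it is quoted from \cite{BDL21} with the one-line remark that it ``can be deduced with the help of Lemma~\ref{lem-2-1}''. Your overall strategy---reduce to a two-sided comparison between $\bm{h}(a,b)$ and the product $P:=(|a|^{q-1}a-|b|^{q-1}b)(a-b)$, then invoke Lemma~\ref{lem-2-1} with $\alpha=q$---is exactly the intended one, and your upper bound $\bm{h}(a,b)\le P$ together with the reduction of $\bm{h}_\pm$ to $\bm{h}$ are both correct.

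There is, however, a genuine gap in the lower bound. From your identity $\bm{h}(a,b)=P-\bm{H}(a,b)$ and the bounds $0\le \bm{H}(a,b)\le P$ you only get $0\le \bm{h}(a,b)\le P$; the ``Hence $\tfrac12 P\le \bm{h}(a,b)$'' is a non sequitur, and in fact the constant $\tfrac12$ is false. For $q=2$, $a=0$, $b=1$ one computes $\bm{h}(0,1)=2\int_1^0 s(s-1)\,ds=\tfrac13$, whereas $\tfrac12 P=\tfrac12$. What is actually happening is that $\bm{H}(a,b)=\bm{h}(b,a)$ (your integration by parts gives $\bm{h}(a,b)+\bm{h}(b,a)=P$), and $\bm{h}$ is \emph{not} symmetric in its arguments; the ratio $\bm{h}(b,a)/\bm{h}(a,b)$ can be as large as $q$ (take $a=1$, $b=0$). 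To repair the argument you must show this ratio is bounded by a constant $C(q)$, which then yields $\bm{h}(a,b)\ge \tfrac{1}{1+C(q)}P$. One clean way is to write, via $s=ta+(1-t)b$,
\[
\bm{h}(a,b)=q(a-b)^2\int_0^1 t\,|ta+(1-t)b|^{q-1}\,dt,
\]
and compare $\int_0^1 t\,|ta+(1-t)b|^{q-1}\,dt$ with $\int_0^1(1-t)\,|ta+(1-t)b|^{q-1}\,dt$ by restricting to the half of $[0,1]$ nearer the endpoint of larger modulus. Alternatively, apply Lemma~\ref{lem-2-1} with $\alpha=\tfrac{q+1}{2}$ (as you half-suggest but then abandon) and compare $\bm{h}(a,b)$ directly with $\bigl||a|^{(q-1)/2}a-|b|^{(q-1)/2}b\bigr|^2$. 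Either route works, but neither is the one-line step you wrote.
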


What follows is a necessary tool for dealing with the nonlocal term to discuss the integral-type Harnack inequality. 
\begin{lemma} [Lemma 2.9, \cite{BGK22}]
\label{lem-2-3}
Let $a, b>0, \tau_1, \tau_2 \geq 0$ and $p>1$. Then there exists a constant $\gamma =\gamma(p)>1$ such that
\begin{align*}
|b-a|^{p-2}(b-a)\left(\tau_1^p a^{-\varepsilon}-\tau_2^p b^{-\varepsilon}\right) \geq & \gamma \xi(\varepsilon)\left|\tau_2 b^{\frac{\alpha}{p}}-\tau_1 a^{\frac{\alpha}{p}}\right|^p \\
& -\left(\xi(\varepsilon)+1+\varepsilon^{-(p-1)}\right)|\tau_2-\tau_1|^p\left(b^\alpha+a^\alpha\right),
\end{align*}
where $\varepsilon \in(0, p-1)$, $\alpha:=p-1-\varepsilon$, $\xi(\varepsilon)=\frac{\varepsilon p^p}{\alpha}$ if $0<\alpha<1$, and $\xi(\varepsilon)=\varepsilon(\frac{p}{\alpha})^p$ otherwise.
\end{lemma}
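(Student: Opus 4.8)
\emph{Step 1: reductions and a change of variables.} The plan is to peel off the two parameters by homogeneity, reduce to a one--variable inequality in $t=b/a$, and isolate the ``diagonal'' part that produces the gradient--type term. First, the left side is invariant under the simultaneous swap $(a,\tau_1)\leftrightarrow(b,\tau_2)$, since $|a-b|^{p-2}(a-b)\bigl(\tau_2^{p}b^{-\varepsilon}-\tau_1^{p}a^{-\varepsilon}\bigr)=|b-a|^{p-2}(b-a)\bigl(\tau_1^{p}a^{-\varepsilon}-\tau_2^{p}b^{-\varepsilon}\bigr)$, and the right side is symmetric too; so we may assume $b\ge a>0$ (hence $|b-a|^{p-2}(b-a)=(b-a)^{p-1}$), and by continuity it suffices to treat $\tau_1,\tau_2>0$. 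The algebraic key is the identity: with $A:=\tau_1 a^{\alpha/p}$, $B:=\tau_2 b^{\alpha/p}$ and $\alpha+\varepsilon=p-1$,
\[
|b-a|^{p-2}(b-a)\bigl(\tau_1^{p}a^{-\varepsilon}-\tau_2^{p}b^{-\varepsilon}\bigr)=A^{p}\Bigl(\tfrac{b-a}{a}\Bigr)^{p-1}-B^{p}\Bigl(\tfrac{b-a}{b}\Bigr)^{p-1}.
\]
The whole inequality being homogeneous of degree $\alpha$ under $(a,b)\mapsto(\lambda a,\lambda b)$ and of degree $p$ under $(\tau_1,\tau_2)\mapsto(\mu\tau_1,\mu\tau_2)$, we normalize $a=1$, $\tau_1=1$; then $b=t\ge1$, $A=1$, $B=\tau_2 t^{\alpha/p}\ge0$ is free, and $|\tau_2-\tau_1|^{p}(a^{\alpha}+b^{\alpha})=(1+t^{-\alpha})\,\bigl|B-t^{\alpha/p}\bigr|^{p}$. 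Everything thus reduces to proving
\[
(t-1)^{p-1}-B^{p}\Bigl(1-\tfrac{1}{t}\Bigr)^{p-1}\ \ge\ \gamma\,\xi(\varepsilon)\,|B-1|^{p}-\bigl(\xi(\varepsilon)+1+\varepsilon^{-(p-1)}\bigr)\bigl(1+t^{-\alpha}\bigr)\bigl|B-t^{\alpha/p}\bigr|^{p}
\]
for all $t\ge1$, $B\ge0$ (constants tracked up to factors depending only on $p$).

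\emph{Step 2: the scalar core.} The heart is the estimate
\[
(t-1)^{p-1}\bigl(1-t^{-\varepsilon}\bigr)\ \ge\ c(p)\,\xi(\varepsilon)\,\bigl(t^{\alpha/p}-1\bigr)^{p},\qquad t\ge1,
\]
which is the case $\tau_1=\tau_2$ (i.e.\ $B=t^{\alpha/p}$) of the displayed inequality. I would prove it by a short case split. For $1\le t\le2$ one uses the concavity bound $t^{\alpha/p}-1\le\frac{\alpha}{p}(t-1)$ (legitimate since $0<\alpha/p<1$) and $1-t^{-\varepsilon}=\varepsilon\!\int_{1}^{t}\sigma^{-1-\varepsilon}\,d\sigma\ge 2^{-1-\varepsilon}\varepsilon(t-1)$, reducing the claim to $c(p)\,\xi(\varepsilon)(\alpha/p)^{p}\le 2^{-1-\varepsilon}\varepsilon$; this closes because $\xi(\varepsilon)\,\alpha^{p}\le p^{p}\varepsilon$ in both branches of the definition of $\xi$. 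It is precisely this near--diagonal regime, where both sides vanish like $(t-1)^{p}$ with ratio of leading coefficients $\varepsilon(p/\alpha)^{p}$, that dictates the two--case shape of $\xi(\varepsilon)$ --- the factor $\alpha^{p-1}$ lost when $0<\alpha<1$ being why $(p/\alpha)^{p}$ is replaced there by $p^{p}/\alpha$. For $t\ge2$ one instead uses $t^{\alpha/p}-1\le\frac{\alpha}{p-1}\bigl(t^{(p-1)/p}-1\bigr)\le\frac{\alpha}{p-1}t^{(p-1)/p}$, which retains the smallness $\alpha^{p}$, together with $1-t^{-\varepsilon}\ge1-2^{-\varepsilon}\ge c(p)\varepsilon$ and $t\le2(t-1)$; the ratio of the two sides is then bounded below by a $p$--power times $\varepsilon/(\xi(\varepsilon)\alpha^{p})\ge p^{-p}$, again by $\xi(\varepsilon)\,\alpha^{p}\le p^{p}\varepsilon$ (large $t$ is the easy regime, the left side growing like $t^{p-1}$ against the right side's $t^{\alpha}$, $\alpha<p-1$).

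\emph{Step 3: the two--variable inequality and the obstacle.} I would then prove the displayed inequality of Step 1 by cases on $B$ relative to $1$, $t^{\alpha/p}$ and $t^{(p-1)/p}$ (all $\ge1$ for $t\ge1$, ordered as written). If $0\le B\le1$: the left side is $\ge0$ while the right side is $\le0$, since $|B-1|\le|B-t^{\alpha/p}|$ and the gradient coefficient does not exceed the error coefficient; the inequality is trivial. If $1\le B\le t^{\alpha/p}$: then $B^{p}\le t^{\alpha}\le t^{p-1}$, so the left side is $\ge(t-1)^{p-1}(1-t^{-\varepsilon})$, the right side is $\le\gamma\,\xi(\varepsilon)(t^{\alpha/p}-1)^{p}$, and choosing $\gamma$ no larger than the constant $c(p)$ of Step 2 closes the case by Step 2. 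If $t^{\alpha/p}\le B\le t^{(p-1)/p}$: the left side is still $\ge0$ but possibly small, so the gradient term $\gamma\,\xi(\varepsilon)|B-1|^{p}$ must now be absorbed by the error term; one splits $|B-1|^{p}\le(1+\eta)^{p-1}|B-t^{\alpha/p}|^{p}+(1+\eta^{-1})^{p-1}(t^{\alpha/p}-1)^{p}$ with a fixed small $\eta=\eta(p)$, absorbs the first summand into the error term (possible since $\xi(\varepsilon)+1\le\xi(\varepsilon)+1+\varepsilon^{-(p-1)}$), and controls $\gamma\,\xi(\varepsilon)(t^{\alpha/p}-1)^{p}$ via the left side at $B=t^{\alpha/p}$ by Step 2 --- except that near $B=t^{\alpha/p}$, when $\varepsilon$ is small, one has $|B-1|\gg|B-t^{\alpha/p}|$, and here the term $\varepsilon^{-(p-1)}$ in the error coefficient is exactly what is needed to dominate the gradient term (because $\xi(\varepsilon)=\varepsilon(p/\alpha)^{p}\ll\varepsilon^{-(p-1)}$ for small $\varepsilon$). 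Finally, if $B\ge t^{(p-1)/p}$, the left side is negative, of magnitude at most $(1-1/t)^{p-1}B^{p}$, dominated by the error term after comparing $|B-1|^{p}$, $|B-t^{\alpha/p}|^{p}$ and $B^{p}$ through $(x+y)^{p}$--type inequalities, the gradient term being absorbed as before. Collecting the cases gives the Lemma with $\gamma=\gamma(p)$. The main obstacle is the sharp scalar inequality of Step 2 --- pinning down the exact $\xi(\varepsilon)$ with a $p$--only constant --- together with the delicate sub-case $B\approx t^{\alpha/p}$, $\varepsilon$ small, where the left side is negligible and the gradient term can only be absorbed thanks to the precise $\varepsilon^{-(p-1)}$ in the error coefficient.
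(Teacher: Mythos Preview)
The paper does not prove this lemma; it is quoted from Banerjee--Garain--Kinnunen \cite{BGK22} (their Lemma 2.9) and used as a black box in Section \ref{sec5}. There is therefore no proof in the present paper to compare your argument against.

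On its own terms, your strategy is the natural one and is in fact the route taken in \cite{BGK22}: exploit the symmetry and the bi-homogeneity to normalize to $a=\tau_1=1$, $t=b/a\ge1$, with a single free variable $B=\tau_2 t^{\alpha/p}$; isolate the diagonal scalar inequality (your Step 2) as the core estimate; then run a case analysis on the position of $B$ relative to $1$, $t^{\alpha/p}$, $t^{(p-1)/p}$. Your Step 2 is correct and cleanly explains why the two-branch definition of $\xi(\varepsilon)$ is dictated by the $t\to1^+$ asymptotics. Note, incidentally, that this same asymptotic computation shows that when $\alpha\ge1$ the limiting ratio of the two sides equals exactly $1$, so one cannot have $\gamma>1$ as printed; the intended statement is with $\gamma^{-1}$ (or $\gamma\in(0,1)$), and this is consistent with how the lemma is actually applied in the proof of Lemma \ref{lem-5-3}. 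Your Step 3 is only an outline: the sub-cases $t^{\alpha/p}\le B\le t^{(p-1)/p}$ and $B\ge t^{(p-1)/p}$ require the splitting constants to be tracked explicitly --- in particular the interplay between $\xi(\varepsilon)$ and $\varepsilon^{-(p-1)}$ for small $\varepsilon$, which you correctly identify as the delicate point --- but the skeleton is sound and matches the argument in the cited source.
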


We give a fast geometric convergence lemma from \cite[Lemma 4.1]{D93}.

\begin{lemma} 
\label{lem-2-4}
Let $\{Y_j\}_{j=0}^\infty$ be a sequence of positive numbers satisfying
\begin{align*}
Y_{j+1}\leq Kb^jY_j^{1+\delta},\quad j=0,1,2, \ldots
\end{align*}
for some constants $K$, $b>1$ and $\delta>0$. If
\begin{align*}
Y_0\leq K^{-\frac{1}{\delta}}b^{-\frac{1}{\delta^2}},
\end{align*}
then we have $Y_j\rightarrow 0$ as $j\rightarrow 0$.
\end{lemma}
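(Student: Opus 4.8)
The plan is to establish, by induction on $j$, the explicit decay bound
\begin{align*}
Y_j \le Y_0\, b^{-j/\delta}, \qquad j = 0, 1, 2, \ldots,
\end{align*}
after which the conclusion is immediate: since $b>1$, the right-hand side tends to $0$, so $Y_j\to 0$ as $j\to\infty$ (the ``$j\to 0$'' in the statement being a typo for $j\to\infty$).

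First, the case $j=0$ holds with equality. For the inductive step I would assume $Y_j\le Y_0 b^{-j/\delta}$, insert it into the hypothesis, and compute
\begin{align*}
Y_{j+1} \le K b^j Y_j^{1+\delta} \le K b^j\big(Y_0 b^{-j/\delta}\big)^{1+\delta} = K Y_0^{1+\delta}\, b^{\,j-j(1+\delta)/\delta} = \big(K Y_0^{\delta}\big)\, Y_0\, b^{-j/\delta},
\end{align*}
using the identity $j-j(1+\delta)/\delta=-j/\delta$. To upgrade this to $Y_{j+1}\le Y_0 b^{-(j+1)/\delta}$ it suffices that $K Y_0^{\delta} b^{-j/\delta}\le b^{-(j+1)/\delta}$, i.e. that $K Y_0^{\delta}\le b^{-1/\delta}$; raising to the power $1/\delta$ shows this is exactly the smallness assumption $Y_0\le K^{-1/\delta}b^{-1/\delta^2}$. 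Thus the induction closes and the claimed bound propagates for all $j$.

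There is no genuine obstacle in this lemma — the argument is elementary, and the only thing to watch is the arithmetic of the exponents of $b$ together with the observation that the hypothesis on $Y_0$ is precisely the threshold that renders the induction self-sustaining. If one prefers a non-inductive presentation, setting $y_j:=\log Y_j$ turns the recursion into the affine inequality $y_{j+1}\le \log K+(\log b)\,j+(1+\delta)\,y_j$, whose iteration produces a convergent geometric series in powers of $(1+\delta)^{-1}$ and reproduces the same estimate $y_j\le \log Y_0-(j/\delta)\log b$, again giving $Y_j\to 0$.
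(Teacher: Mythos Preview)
Your proof is correct; the induction closes exactly as you wrote, and the exponent bookkeeping is right. The paper does not supply its own proof of this lemma but merely cites \cite[Lemma~4.1]{D93}, where the argument is essentially the same induction you give here.
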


The following iteration lemma is displayed in \cite[Lemma 6.1]{G03}.

\begin{lemma}
\label{lem-2-5}
Let constants $A, B, C\geq 0$, $\alpha>\beta\geq0$ and $\theta\in (0, 1)$. Suppose that $f: [r, \rho] \rightarrow [0, \infty)$ is a bounded function that satisfies 
\begin{align*}
f(R_1) \leq \theta f(R_2)+\frac{A}{(R_2-R_1)^\alpha}+\frac{B}{(R_2-R_1)^\beta}+C 
\end{align*}
for all $r<R_1<R_2<\rho$. Then we have
\begin{align*}
f(r)\leq \gamma(\alpha, \theta)\left[\frac{A}{(\rho-r)^\alpha}+\frac{B}{(\rho-r)^\beta}+C\right].
\end{align*}
\end{lemma}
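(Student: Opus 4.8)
The plan is the classical iteration-on-shrinking-radii argument. Since $\alpha>\beta\ge 0$ forces $\alpha>0$, and $\theta\in(0,1)$, the first step is to fix a parameter $\tau=\tau(\alpha,\theta)\in(0,1)$ for which the geometric ratio $\theta\tau^{-\alpha}$ is still strictly less than $1$; any $\tau$ with $\theta^{1/\alpha}<\tau<1$ works. Next I would introduce the increasing sequence of radii $R_0:=r$ and $R_{i+1}:=R_i+(1-\tau)\tau^{i}(\rho-r)$ for $i\ge 0$, so that $R_i\uparrow\rho$, every $R_i$ stays in $[r,\rho)$, and $R_{i+1}-R_i=(1-\tau)\tau^{i}(\rho-r)$.

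Applying the hypothesis to the pair $(R_1,R_2)=(R_i,R_{i+1})$ gives, for each $i\ge 0$,
\begin{align*}
f(R_i)\le \theta f(R_{i+1})+\frac{A}{(1-\tau)^{\alpha}\tau^{i\alpha}(\rho-r)^{\alpha}}+\frac{B}{(1-\tau)^{\beta}\tau^{i\beta}(\rho-r)^{\beta}}+C ,
\end{align*}
and iterating this bound $n$ times yields
\begin{align*}
f(r)\le \theta^{n}f(R_n)+\sum_{i=0}^{n-1}\theta^{i}\left(\frac{A}{(1-\tau)^{\alpha}\tau^{i\alpha}(\rho-r)^{\alpha}}+\frac{B}{(1-\tau)^{\beta}\tau^{i\beta}(\rho-r)^{\beta}}+C\right).
\end{align*}
Now let $n\to\infty$. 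The remainder $\theta^{n}f(R_n)$ tends to $0$ because $f$ is bounded and $\theta<1$ --- this is the only place the boundedness of $f$ is used. The three remaining series converge: the $A$-series has ratio $\theta\tau^{-\alpha}<1$ by the choice of $\tau$; the $B$-series has ratio $\theta\tau^{-\beta}<\theta\tau^{-\alpha}<1$ since $0<\tau<1$ and $\beta<\alpha$; and $\sum_{i\ge0}\theta^{i}C=C/(1-\theta)$.

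Summing these geometric series explicitly gives
\begin{align*}
f(r)\le \frac{1}{1-\theta\tau^{-\alpha}}\cdot\frac{A}{(1-\tau)^{\alpha}(\rho-r)^{\alpha}}+\frac{1}{1-\theta\tau^{-\beta}}\cdot\frac{B}{(1-\tau)^{\beta}(\rho-r)^{\beta}}+\frac{C}{1-\theta}.
\end{align*}
Since $0<1-\tau<1$ we have $(1-\tau)^{-\beta}\le(1-\tau)^{-\alpha}$, and $1-\theta\tau^{-\beta}\ge 1-\theta\tau^{-\alpha}>0$, so each coefficient above is dominated by the single constant $\gamma(\alpha,\theta):=\max\{(1-\tau)^{-\alpha}(1-\theta\tau^{-\alpha})^{-1},\,(1-\theta)^{-1}\}$, which depends only on $\alpha$ and $\theta$ through the already-fixed $\tau$; this is exactly the claimed estimate. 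The argument is entirely elementary and has no real obstacle: the only points to watch are that $\tau$ must be chosen \emph{after} $\alpha$ and $\theta$ so that all geometric series converge, and that the hypothesis is invoked (for $i=0$) with $R_1=r$, which is harmless and is in any case what makes a bound on $f(r)$ meaningful at all.
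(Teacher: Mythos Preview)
Your argument is the standard iteration proof of this lemma and is correct. The paper itself does not supply a proof: it simply records the statement and cites \cite[Lemma 6.1]{G03} (Giusti's \emph{Direct Methods in the Calculus of Variations}), where precisely this shrinking-radii iteration is carried out. So there is nothing to compare --- you have reproduced the classical argument from the cited source.

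One small remark: the hypothesis as written in the paper requires strict inequality $r<R_1$, so your very first application with $R_1=R_0=r$ is, formally, not covered. This is almost certainly a typographical slip in the statement (Giusti's version allows $r\le R_1<R_2\le\rho$), and in any event it is easy to patch: run your iteration from an arbitrary starting point $R_0=t\in(r,\rho)$ to obtain $f(t)\le\gamma[A(\rho-t)^{-\alpha}+B(\rho-t)^{-\beta}+C]$ for every such $t$, and then observe that one further application of the hypothesis with any $R_2\in(r,\rho)$ (and $R_1\downarrow r$ is not needed --- just use the already-bounded $f(R_2)$) is not available either since $f$ is not assumed continuous. The cleanest fix is simply to note that the lemma as stated is only useful, and only used in the paper, with the convention $r\le R_1$; your proof is then complete as written.
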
 

We now present a Poincar\'{e}-type inequality from\cite [Chapter \uppercase\expandafter{\romannumeral1}, Proposition 2.1]{D93}. 

\begin{lemma}
\label{lem-2-6}
Suppose that $\Omega\subset\mathbb{R}^N$ is a bounded convex set. Let $\varphi \in C(\overline{\Omega})$ satisfy $0 \leq \varphi \leq 1$, and the sets $[\varphi>k]$ are convex for any $k\in(0,1)$. Let $v\in W^{1, p}(\Omega)$, and the set 
$$\mathcal{E}:=[v=0] \cap[\varphi=1]$$
has positive measure. Then we have
\begin{align*}
\left(\int_{\Omega} \varphi|v|^p \,dx\right)^{\frac{1}{p}} \leq \gamma \frac{(\operatorname{diam} \Omega)^N}{|\mathcal{E}|^{\frac{N-1}{N}}}\left(\int_\Omega \varphi|\nabla v|^p \,dx\right)^{\frac{1}{p}},
\end{align*}
where $\gamma>0$ depends only on $N$ and $p$, but independent of $v$ and $\varphi$.
\end{lemma}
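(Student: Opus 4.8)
The final statement is Theorem~\ref{thm-1-6}, the intrinsic Harnack inequality. Below is a plan of proof.

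\medskip

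\textbf{Overall strategy.} The proof combines three ingredients already advertised in the paper: the local boundedness estimate (Theorem~\ref{thm-1-2}), the integral-type Harnack inequality (Theorem~\ref{thm-1-5}), and the expansion of positivity developed in Section~\ref{sec6}, all tied together by the comparison principle of Section~\ref{sec3}. The guiding principle is the one used for the local doubly nonlinear equation in \cite{BDG23} and for the nonlocal equation in \cite{N23}: work in \emph{intrinsic} cylinders whose time-scale is dictated by $[u(x_0,t_0)]^{q+1-p}$, so that the non-homogeneity of the equation (the fact that multiples of solutions are not solutions in the fast-diffusion range $p-1<q$) is absorbed by the geometry. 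First I would normalise: set $M:=u(x_0,t_0)>0$ and introduce the intrinsic cylinder $Q:=K_{\rho}(x_0)\times(t_0-\theta M^{q+1-p}\rho^p,\,t_0+\theta M^{q+1-p}\rho^p)$ with $\theta$ a small parameter to be chosen. Note that under the running hypothesis $u\in L^\infty(\mathbb{R}^N\times(0,T))$ the tail terms $\mathrm{Tail}_\infty(u;x_0,\rho/2;\cdot)$ appearing in Theorems~\ref{thm-1-2} and~\ref{thm-1-5} are controlled by $\gamma\|u\|_{L^\infty}$, which is how the constants $\gamma,\sigma$ end up depending on $\|u\|_{L^\infty(\mathbb{R}^N\times(0,T))}$.

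\medskip

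\textbf{Lower bound (the substance of the argument).} The inequality $u(x,t)\ge\gamma^{-1}M$ on the intrinsic cylinder is where the expansion of positivity does the work. Starting from $u(x_0,t_0)=M$ and the continuity of $u$, one first propagates positivity \emph{in space} at a fixed time slightly earlier than $t_0$: using the measure-to-pointwise machinery of Section~\ref{sec6} one shows that on some cube $K_{c\rho}(x_0)\times\{t_0-\tau\}$ (with $\tau\simeq M^{q+1-p}\rho^p$) one has $u\ge\kappa M$ for structural constants $c,\kappa$. The nonlocal term is handled here exactly as the tail is handled in the expansion-of-positivity lemmas, using $\|u\|_{L^\infty}$ to bound the tail uniformly. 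Next one propagates positivity \emph{forward in time} over a time-interval of intrinsic length $\sigma M^{q+1-p}\rho^p$; this is the key step and the main obstacle, because in the range $p-1<q<p^2-1$ one must track carefully how the pointwise lower bound degrades under each iteration of the expansion of positivity, and one needs the constraint $q<p^2-1$ (equivalently a quantitative control of $q+1-p$ versus $p-1$) precisely to keep the geometric series of losses summable and to guarantee that the final time-length $\sigma M^{q+1-p}\rho^p$ is still of full intrinsic order. A covering/chaining argument across finitely many such intrinsic cylinders, whose number depends only on the structural data, then yields $u\ge\gamma^{-1}M$ on all of $Q$, provided the larger cylinder $K_{8\rho}(x_0)\times(t_0-\gamma M^{q+1-p}(8\rho)^p,t_0+\gamma M^{q+1-p}(8\rho)^p)$ lies in $E_T$ (this inclusion is exactly what guarantees room to run the iterations).

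\medskip

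\textbf{Upper bound.} The inequality $u(x,t)\le\gamma M$ follows from Theorem~\ref{thm-1-5} together with the lower bound just obtained. Apply Theorem~\ref{thm-1-5} on an intrinsic cylinder of the form $K_{2\rho}(x_0)\times(t_1-S,t_1]$ with $S\simeq M^{q+1-p}\rho^p$ chosen so that $Q$ is contained in the half-cylinder $Q_{\rho,S/2}$. The right-hand side of Theorem~\ref{thm-1-5} contains the factor $\big(\inf_{t}\mint_{K_\rho}u^q\,dx\big)^{p/\lambda_q}$; by the lower bound $u\ge\gamma^{-1}M$ on the relevant slices this infimum is $\simeq M^q$, and a direct computation using $S\simeq M^{q+1-p}\rho^p$ shows that the first term is $\simeq M$. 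The second term $\big(S/\rho^p\big)^{1/(q+1-p)}\simeq M$ by the choice of $S$; the two tail terms are bounded using $\|u\|_{L^\infty}$ and the smallness of $\rho\le 1$, and, after possibly shrinking $\sigma$, are also $\lesssim M$. Collecting these gives $\operatorname*{ess\,sup}_Q u\le\gamma M$, and since $u$ is continuous this is a genuine pointwise bound. Combining the two bounds and relabelling constants gives the statement, with $\sigma\in(0,1)$ the minimum of the smallness thresholds produced above and $\gamma$ the maximum of the constants produced; both depend only on $N,p,s,q,\Lambda,\|u\|_{L^\infty(\mathbb{R}^N\times(0,T))}$, as claimed. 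The delicate point throughout is to keep \emph{every} intrinsic rescaling consistent — the time-scale $M^{q+1-p}\rho^p$ must be the same (up to structural constants) in the expansion of positivity, in the covering, and in the application of Theorem~\ref{thm-1-5} — and the hypotheses $p>N$ and $q<p^2-1$ are what make this bookkeeping close.
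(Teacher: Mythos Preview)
Your proposal addresses the wrong statement. The lemma you were asked to prove is Lemma~\ref{lem-2-6}, a Poincar\'e-type inequality for functions vanishing on a set of positive measure, weighted by a quasi-concave cutoff $\varphi$. Instead you have written a proof plan for Theorem~\ref{thm-1-6}, the intrinsic Harnack inequality. These are entirely different statements: Lemma~\ref{lem-2-6} is a purely elliptic functional inequality on a convex domain, with no reference to the PDE, to intrinsic cylinders, to tails, or to any of the machinery you invoke. Nothing in your proposal makes contact with the actual claim.

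For the record, the paper does not prove Lemma~\ref{lem-2-6} either; it simply quotes it from \cite[Chapter~I, Proposition~2.1]{D93}. A genuine proof would proceed by the classical potential-theoretic argument: for smooth $v$ and $x\in\Omega$, $y\in\mathcal{E}$, write $v(x)=v(x)-v(y)=-\int_0^{|x-y|}\partial_r v(x+r\omega)\,dr$ with $\omega=(y-x)/|y-x|$, integrate in $y$ over $\mathcal{E}$, use the convexity of $\Omega$ and of the superlevel sets of $\varphi$ to stay inside the weighted domain along the segment, and bound by a Riesz potential of $|\nabla v|$; the factor $(\operatorname{diam}\Omega)^N/|\mathcal{E}|^{(N-1)/N}$ arises from the geometry of this averaging. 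None of this has anything to do with the Harnack argument you sketched.
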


We finally state a parabolic Sobolev embedding lemma.

\begin{lemma} [Chapter \uppercase\expandafter{\romannumeral1}, Proposition 3.1, \cite{D93}]
\label{lem-2-7}
Suppose that $E\subset\mathbb{R}^N$ is a bounded domain. Let $m, p>1$ and $q=p\frac{N+m}{N}$. Then for every 
$$u\in L^{\infty}(0, T; L^m(E)) \cap L^p(0, T; W_0^{1, p}(E)),$$
we have
\begin{align*}
\iint_{E_T}|u|^q \,dxdt \leq \gamma \iint_{E_T}|\nabla u|^p \,dxdt\left(\operatorname*{ess \sup}_{t\in[0,T]}  \int_{E\times\{t\}}|u|^m\,dx\right)^{\frac{p}{N}},
\end{align*}
where $\gamma>0$ only depends on $N,p$ and $q$. 
\end{lemma}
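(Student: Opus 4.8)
\textbf{Proof proposal for Lemma \ref{lem-2-7}.}

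The plan is to prove the parabolic embedding by interpolation between the $L^\infty_t L^m_x$ bound and the $L^p_t W^{1,p}_x$ bound, feeding the spatial part through the classical Gagliardo--Nirenberg--Sobolev inequality. First I would fix $t\in[0,T]$ and apply the spatial Sobolev inequality on $E$ to $u(\cdot,t)\in W_0^{1,p}(E)$. When $p<N$ this gives $\|u(\cdot,t)\|_{L^{p^*}(E)}\le \gamma\|\nabla u(\cdot,t)\|_{L^p(E)}$ with $p^*=\tfrac{Np}{N-p}$; when $p\ge N$ one first embeds into a large fixed Lebesgue exponent (using boundedness of $E$ and the zero boundary values to control the full $W^{1,p}$ norm by the gradient norm). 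Then, for a suitable $\theta\in(0,1)$ and $\bar p$ determined by the scaling $\tfrac1q=\tfrac{\theta}{p^*}+\tfrac{1-\theta}{m}$, interpolation in space yields
\begin{align*}
\|u(\cdot,t)\|_{L^{q}(E)}\le \|u(\cdot,t)\|_{L^{p^*}(E)}^{\theta}\,\|u(\cdot,t)\|_{L^{m}(E)}^{1-\theta}\le \gamma\,\|\nabla u(\cdot,t)\|_{L^p(E)}^{\theta}\,\Big(\operatorname*{ess\,sup}_{\tau\in[0,T]}\|u(\cdot,\tau)\|_{L^m(E)}\Big)^{1-\theta}.
\end{align*}

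Next I would raise this to the power $q$ and integrate in $t$ over $(0,T)$, pulling the $t$-independent supremum factor out of the integral:
\begin{align*}
\iint_{E_T}|u|^{q}\,dxdt\le \gamma\,\Big(\operatorname*{ess\,sup}_{t\in[0,T]}\|u(\cdot,t)\|_{L^m(E)}\Big)^{q(1-\theta)}\int_0^T\|\nabla u(\cdot,t)\|_{L^p(E)}^{q\theta}\,dt.
\end{align*}
The whole point of the choice $q=p\tfrac{N+m}{N}$ is that it forces $q\theta=p$ exactly, so the remaining time integral is precisely $\iint_{E_T}|\nabla u|^p\,dxdt$, and simultaneously $q(1-\theta)=\tfrac{p}{N}\cdot m$, giving the stated exponent $\tfrac pN$ on the supremum factor. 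I would verify these two identities directly: with $\tfrac1q=\tfrac{\theta}{p^*}+\tfrac{1-\theta}{m}$ and $p^*=\tfrac{Np}{N-p}$, a short computation shows $\theta=\tfrac{p}{q}$ (equivalently $q\theta=p$) precisely when $q=\tfrac{p(N+m)}{N}$, and then $q(1-\theta)=q-p=\tfrac{pm}{N}$.

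The main obstacle is the borderline/supercritical case $p\ge N$, where $p^*$ is not available (or is infinite). Here I would instead use that for $u\in W_0^{1,p}(E)$ with $E$ bounded one has, for any finite exponent $\ell$, the embedding $\|u\|_{L^{\ell}(E)}\le \gamma(N,p,\ell,|E|)\|\nabla u\|_{L^p(E)}$ — for $p>N$ via $L^\infty$ and finite measure, for $p=N$ via the Trudinger/Moser embedding into every $L^\ell$ — and then pick $\ell$ large enough that the interpolation identities above still close with $q\theta=p$. Since $q=\tfrac{p(N+m)}{N}>p$ is a fixed finite number, such an $\ell$ (depending only on $N,p,m$, hence on $N,p,q$) always exists, so the argument goes through with $\gamma$ depending only on $N,p,q$ as claimed; the minor bookkeeping is just to confirm that the $|E|$-dependent constants can be absorbed, which is where the hypothesis that $E$ is bounded is used. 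A second small point to be careful about is measurability in $t$ of $t\mapsto\|\nabla u(\cdot,t)\|_{L^p(E)}$ and of $t\mapsto\|u(\cdot,t)\|_{L^q(E)}$, which follows from $u\in L^p(0,T;W_0^{1,p}(E))$ and Fubini, so the time integration above is legitimate.
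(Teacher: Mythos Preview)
The paper does not prove this lemma; it is quoted with a reference to DiBenedetto's book, where the proof is exactly the Gagliardo--Nirenberg route you outline. For $p<N$ your argument is correct as written: a quick check shows that $q=p\tfrac{N+m}{N}$ always lies between $m$ and $p^*$ (indeed $q>p^*\iff m>p^*$ and $q>m\iff m<p^*$), so the H\"older interpolation between $L^{p^*}$ and $L^m$ is legitimate, and the exponent bookkeeping $q\theta=p$, $q(1-\theta)=pm/N$ goes through.

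The gap is in your treatment of $p\ge N$. Your plan there is to first embed $W_0^{1,p}(E)\hookrightarrow L^\ell(E)$ for some large finite $\ell$ and then interpolate between $L^\ell$ and $L^m$. But the requirement $q\theta=p$ together with $\tfrac1q=\tfrac{\theta}{\ell}+\tfrac{1-\theta}{m}$ forces $\ell=\tfrac{Np}{N-p}$, which is $+\infty$ when $p=N$ and negative when $p>N$; no finite $\ell$ closes the identities. Moreover, the embedding $\|u\|_{L^\ell}\le C\|\nabla u\|_{L^p}$ for $p=N$ is \emph{not} scale invariant (left side scales like $\lambda^{-N/\ell}$, right side like $\lambda^0$), so its constant genuinely depends on $|E|$ and cannot be ``absorbed'' into a $\gamma=\gamma(N,p,q)$.

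The fix is to bypass the two-step route and invoke the multiplicative Gagliardo--Nirenberg inequality directly on $\mathbb{R}^N$ (extending $u(\cdot,t)\in W_0^{1,p}(E)$ by zero): for $\theta\in[0,1]$ with
\[
\frac{1}{q}=\theta\Big(\frac{1}{p}-\frac{1}{N}\Big)+\frac{1-\theta}{m},
\]
one has $\|u\|_{L^q}\le C(N,p,q)\,\|\nabla u\|_{L^p}^{\theta}\|u\|_{L^m}^{1-\theta}$, valid for all $p>1$ including $p\ge N$, with no domain dependence. Taking $\theta=p/q$ reproduces exactly your exponent identities, and the rest of your proof (raise to the $q$th power, integrate in $t$) is correct.
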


\subsection{Time mollification}

The exponential mollification in time will be introduced in this subsection to solve the difficulty that weak solutions do not have a time derivation generally. This technique is extracted from \cite{KL06}. For any $v\in L^1(E_T)$ and $h>0$, define
\begin{align*}
[\![v]\!]_h(x,t):=\frac{1}{h} \int_0^t e^{\frac{s-t}{h}} v(x,s)\,ds
\end{align*}
and
\begin{align*}
[\![v]\!]_{\bar{h}}(x, t):=\frac{1}{h} \int_t^T e^{\frac{t-s}{h}} v(x, s)\,ds.
\end{align*}	

The following fundamental properties of mollified functions are given in \cite [Appendix]{BDM13}.

\begin{lemma}
\label{lem-2-8}
Let $r\geq 1$. Then the following conclusions hold. 

\begin{itemize}
\item [(i)] If $v \in L^r(E_T)$, then $[\![v]\!]_h \in L^r (E_T)$ and $\|[\![v]\!]_h\|_{L^r(E_T)}\leq\|v\|_{L^r(E_T)}$. In addition, we have
$[\![v]\!]_h \rightarrow v$ strongly in $L^r(E_T)$ and almost everywhere on $E_T$ as $h \rightarrow 0$. 

\item [(ii)] There almost everywhere on $E_T$ holds
\begin{align*}
\partial_t[\![v]\!]_h=\frac{1}{h}\left(v-[\![v]\!]_h\right),\quad \partial_t[\![v]\!]_{\bar{h}}=\frac{1}{h}\left([\![v]\!]_{\bar{h}}-v\right).
\end{align*}

\item [(iii)] If $v\in L^r(E_T)$, then $[\![v]\!]_h$ and $[\![v]\!]_{\bar{h}}$ belong to $C\left([0, T] ; L^r(E)\right)$.

\item [(iv)] If $Du\in L^r(E_T)$, then $D [\![v]\!]_h=[\![Dv]\!]_h \rightarrow Dv$ strongly in $L^r(E_T)$ and almost everywhere on $E_T$ as $h \rightarrow 0$. 

\item [(v)] If $v\in C([0,T];L^r(E))$, then $[\![v]\!]_h(\cdot,t) \rightarrow v(\cdot,t)$ strongly in $L^r(E)$ and almost everywhere on $E$ for any $t\in(0,T]$ as $h\rightarrow 0$.
\end{itemize}

The same statements of $(\rm{{\romannumeral 1}}), (\rm{{\romannumeral 4}})$ and $(\rm{{\romannumeral 5}})$  also apply to $[\![v]\!]_{\bar{h}}$.
\end{lemma}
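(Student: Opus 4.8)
\textbf{Proof proposal for Lemma \ref{lem-2-8}.}

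The plan is to verify each of the five assertions by direct computation from the definition of the exponential mollification, using only Fubini's theorem, Jensen's (or Minkowski's) inequality, and standard density/continuity arguments; the statements for $[\![v]\!]_{\bar h}$ follow by the obvious time-reversal substitution $t\mapsto T-t$, so I will only treat $[\![v]\!]_h$ explicitly. For (ii) I would simply differentiate $[\![v]\!]_h(x,t)=\tfrac1h\int_0^t e^{(s-t)/h}v(x,s)\,ds$ under the integral sign with respect to $t$: the boundary term at $s=t$ contributes $\tfrac1h v(x,t)$, and differentiating the exponential inside contributes $-\tfrac1h[\![v]\!]_h(x,t)$, giving $\partial_t[\![v]\!]_h=\tfrac1h(v-[\![v]\!]_h)$ a.e.; for $[\![v]\!]_{\bar h}$ the boundary term at $s=t$ comes with the opposite sign, producing the stated formula. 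This identity is the backbone of why the mollified equation has a genuine time derivative, so it is worth stating cleanly even though it is elementary.

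For (i), I would write $[\![v]\!]_h(x,t)=\int_0^{t/h} e^{-\tau}v(x,t-h\tau)\,d\tau$ (or keep the original form and note $\tfrac1h\int_0^t e^{(s-t)/h}\,ds\le 1$), so that $[\![v]\!]_h$ is an average of translates of $v$ against a sub-probability kernel; then Jensen's inequality in the form $\big|\int f\,d\nu\big|^r\le \nu(\mathbb{R})^{r-1}\int|f|^r\,d\nu$ with $\nu(\mathbb{R})\le 1$ gives $|[\![v]\!]_h(x,t)|^r\le \tfrac1h\int_0^t e^{(s-t)/h}|v(x,s)|^r\,ds$, and integrating in $(x,t)$ over $E_T$ and applying Fubini (the $t$-integral of the kernel for fixed $s$ is again $\le 1$) yields $\|[\![v]\!]_h\|_{L^r(E_T)}\le\|v\|_{L^r(E_T)}$. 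For the convergence $[\![v]\!]_h\to v$ in $L^r(E_T)$, I would first prove it for $v$ continuous with compact support in $E_T$ (where $[\![v]\!]_h(x,t)-v(x,t)=\int_0^{t/h}e^{-\tau}(v(x,t-h\tau)-v(x,t))\,d\tau + (\text{error from }e^{-t/h})$, both of which go to $0$ uniformly), and then pass to general $v\in L^r(E_T)$ by density together with the uniform bound $\|[\![\cdot]\!]_h\|_{L^r\to L^r}\le 1$ just established, a standard $3\varepsilon$ argument; a.e. convergence along a subsequence then follows, and in fact full a.e. convergence holds because for a.e. $x$ the function $s\mapsto v(x,s)$ is in $L^r(0,T)$ and Lebesgue points are preserved by this kernel. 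Assertion (iv) is immediate from (i) once one observes, by differentiating under the integral in $x$, that $D[\![v]\!]_h=[\![Dv]\!]_h$, and then applies (i) to each component of $Dv$.

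For (iii), continuity in time: fix $t_1<t_2$ in $[0,T]$ and estimate $\|[\![v]\!]_h(\cdot,t_2)-[\![v]\!]_h(\cdot,t_1)\|_{L^r(E)}$ by splitting $[\![v]\!]_h(x,t_2)-[\![v]\!]_h(x,t_1) = (e^{(t_1-t_2)/h}-1)[\![v]\!]_h(x,t_1) + \tfrac1h\int_{t_1}^{t_2}e^{(s-t_2)/h}v(x,s)\,ds$; the first term is bounded in $L^r(E)$ by $(1-e^{(t_1-t_2)/h})\|v\|_{L^r(E_T)}$-type quantities via part (i), and the second by $\tfrac1h(t_2-t_1)^{1-1/r}\|v\|_{L^r((t_1,t_2)\times E)}\to 0$ as $t_2\to t_1$ (Hölder in $s$), giving the required $C([0,T];L^r(E))$ membership; alternatively one can note $\partial_t[\![v]\!]_h\in L^r(E_T)$ from (ii) and use the fundamental theorem of calculus in $t$. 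Finally, for (v), when $v\in C([0,T];L^r(E))$ I would write, for fixed $t$, $[\![v]\!]_h(\cdot,t)-v(\cdot,t)=\int_0^{t/h}e^{-\tau}\big(v(\cdot,t-h\tau)-v(\cdot,t)\big)\,d\tau - e^{-t/h}v(\cdot,t)$, and bound its $L^r(E)$ norm by $\int_0^{\infty}e^{-\tau}\,\omega(h\tau)\,d\tau + e^{-t/h}\|v(\cdot,t)\|_{L^r(E)}$, where $\omega(\delta):=\sup_{|t'-t''|\le\delta}\|v(\cdot,t')-v(\cdot,t'')\|_{L^r(E)}\to 0$ as $\delta\to0$ by uniform continuity of $t\mapsto v(\cdot,t)$ on the compact $[0,T]$; dominated convergence ($e^{-\tau}\omega(h\tau)\le 2e^{-\tau}\sup\|v\|$) sends the first term to $0$, and the second term is $0$ for $t>0$ as $h\to 0$, so $[\![v]\!]_h(\cdot,t)\to v(\cdot,t)$ in $L^r(E)$, with a.e. convergence on $E$ along a subsequence (or fully, arguing pointwise).

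The only mild subtlety — the ``hard part'' in an otherwise routine verification — is the a.e.-convergence claims in (i) and (v): one must either be content with convergence along a subsequence (which is all that is used in the sequel) or invoke a Lebesgue-differentiation-type argument in the time variable to upgrade to genuine a.e. convergence, using that the exponential kernel $\tfrac1h e^{(s-t)/h}\mathbf 1_{\{s<t\}}$ is an approximate identity dominated by a radially decreasing integrable function, so that $[\![v]\!]_h(x,\cdot)\to v(x,\cdot)$ at every Lebesgue point of $s\mapsto v(x,s)$, hence for a.e.\ $t$ for a.e.\ $x$; combined with Fubini this gives a.e.\ convergence on $E_T$. Since the downstream applications only require the $L^r$ convergence plus a.e.\ convergence along a subsequence, I would present the subsequential statement as the main assertion and remark on the Lebesgue-point strengthening.
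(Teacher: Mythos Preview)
Your proposal is correct and follows the standard route for verifying these properties of the exponential time mollification. The paper itself does not supply a proof of this lemma at all: it merely states the result and cites the Appendix of B\"ogelein--Duzaar--Marcellini \cite{BDM13}, so there is no in-paper argument to compare against. Your direct verification via Jensen's inequality for the norm bound, differentiation under the integral for (ii), the density/$3\varepsilon$ argument for the $L^r$ convergence, and the modulus-of-continuity estimate for (v) is exactly the kind of argument one finds in that reference; nothing is missing, and your remark that subsequential a.e.\ convergence already suffices for all downstream uses in the paper is apt.
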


\section{Comparison principles}
\label{sec3}
In this part, we will consider the continuity with respect to the time variable of weak solutions and establish a comparison principle. We introduce the Cauchy-Dirichlet problem
\begin{align}
\label{3.1}
\left\{\begin{array}{cl}
\partial_t(|u|^{q-1} u)-\mathrm{div} (|\nabla u|^{p-2}\nabla u)+\mathcal{L} u=0 & \text { in } E_T, \\
u=0 & \text { in } \mathbb{R}^N\backslash E \times(0,T], \\
u(\cdot, 0)=u_0 & \text { in } E,
\end{array}\right.
\end{align}
where $p>1, q>0$, $\mathcal{L}$ is defined as in \eqref{1.2}, and $u_0\in L^{q+1}(E)$. 

We give the definition of weak solutions to \eqref{3.1} as below.

\begin{definition}
We say a function
\begin{align*}
u\in C([0,T];L^{q+1}) \cap L^p(0,T;W_0^{1,p}(E))\cap L^\infty(0,T;L_{sp}^{p-1}(\mathbb{R}^N))
\end{align*}
is a weak solution to \eqref{3.1}, if there holds
\begin{align}
\label{3.2}
\iint_{E_T}(|u_0|^{q-1} u_0-|u|^{q-1}u)\partial_t \zeta+|\nabla u|^{p-2}\nabla u \cdot \nabla \zeta\,dxdt+\int_{0}^{T} \mathcal{E}(u,\zeta,t)\,dt =0
\end{align}
for any test function $\zeta \in W^{1,q+1}(0,T;L^{q+1}(E)) \cap L^p(0,T;W_0^{1, p}(E))$ with $\zeta(\cdot,T)=0$, where
\begin{align*}
\mathcal{E}(u,\zeta,t):=\int_{\mathbb{R}^N}\int_{\mathbb{R}^N} K(x,y,t)|u(x,t)-u(y,t)|^{p-2}(u(x,t)-u(y,t))(\zeta(x,t)-\zeta(y,t))\,dydx.
\end{align*}
\end{definition}

\subsection{Parabolicity}

In what follows, we show Eq. \eqref{1.1} is parabolicity, which describes the property that if $u$ is a weak subsolution (super-) to \eqref{1.1}, then the corresponding truncation function is also a weak subsolution (super-) to \eqref{1.1}.

\begin{proposition}
\label{pro-3-2}
Let $p>1$ and $q>0$. If $u$ is a weak subsolution (super-) to \eqref{1.1}, then the truncation function $u_k=k+(u-k)_+$, $u_k=k-(u-k)_-$ with $k\in\mathbb{R}$ is a weak subsolution (super-) to \eqref{1.1}. 
\end{proposition}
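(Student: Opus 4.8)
\textbf{Proof proposal for Proposition \ref{pro-3-2}.}

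The plan is to verify directly that $u_k = k + (u-k)_+$ satisfies the subsolution inequality \eqref{1.7}, treating the parabolic term, the local divergence term, and the nonlocal term separately. First I would reduce to the case where the test function $\varphi$ is supported in the region where $u > k$: since $\varphi \geq 0$, one should be able to replace $\varphi$ by $\varphi \cdot \mathbf{1}_{[u>k]}$ after a suitable approximation (regularizing the characteristic function through a Lipschitz cutoff of $(u-k)_+$), because on $[u \leq k]$ the truncation $u_k$ is constant so $\nabla u_k = 0$ there and the time term contributes nothing. For the local divergence term this is the classical observation that $\nabla u_k = \nabla u \cdot \mathbf{1}_{[u>k]}$ and $|\nabla u_k|^{p-2}\nabla u_k \cdot \nabla\varphi = |\nabla u|^{p-2}\nabla u \cdot \nabla(\varphi\,\mathbf{1}_{[u>k]})$ in the distributional sense, so this piece matches the original subsolution inequality tested against $\varphi\,\mathbf{1}_{[u>k]}$.

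For the time term, the subtlety is that $|u|^{q-1}u$ is not differentiable in $t$, so I would work with the exponential mollification $[\![\,\cdot\,]\!]_h$ from Lemma \ref{lem-2-8}: test the mollified formulation, use $\partial_t[\![v]\!]_h = \tfrac1h(v - [\![v]\!]_h)$ and the monotonicity of $s \mapsto |s|^{q-1}s$ to get the sign of the resulting term, and then pass $h \to 0$. The key algebraic point is that
\begin{align*}
(|u_k|^{q-1}u_k - |u|^{q-1}u)\,\partial_t\varphi
\end{align*}
integrates, after the mollification argument and integration by parts, to a nonpositive (for subsolutions) boundary-type contribution, because $|u_k|^{q-1}u_k = |u|^{q-1}u$ on $[u>k]$ and the difference is handled exactly as in the standard De Giorgi truncation for doubly nonlinear equations; here one uses that $\varphi$ is essentially supported on $[u>k]$ after the reduction above, together with the continuity $u \in C(0,T;L^{q+1})$ to control the time-slice terms.

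The main obstacle is the nonlocal term $\mathcal{E}(u_k,\varphi,t)$, which does not localize: unlike the divergence term, one cannot simply replace $u$ by $u_k$ under the double integral because $U(x,y,t)$ depends on values of $u$ across all of $\mathbb{R}^N$. The strategy is to show the pointwise inequality
\begin{align*}
|u_k(x)-u_k(y)|^{p-2}(u_k(x)-u_k(y))\,(\varphi(x)-\varphi(y)) \leq |u(x)-u(y)|^{p-2}(u(x)-u(y))\,(\varphi(x)-\varphi(y))
\end{align*}
whenever $\varphi$ is supported in $[u>k]$ and $\varphi \geq 0$; this follows from a case analysis on whether $x,y$ lie in $[u>k]$ or not, using that $t \mapsto |t|^{p-2}t$ is monotone and that truncation is a contraction in the sense $|u_k(x)-u_k(y)| \leq |u(x)-u(y)|$ together with the correct sign pairing with $\varphi(x)-\varphi(y)$ (the only nontrivial case is $x \in [u>k]$, $y \notin [u>k]$, where $\varphi(y)=0$, $\varphi(x) \geq 0$, and one compares $u_k(x)-k$ against $u(x)-u(y) \geq u(x)-k$). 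Integrating this pointwise bound against $d\mu$ and combining with the localized divergence and time terms yields \eqref{1.7} for $u_k$. The supersolution case and the lower truncation $u_k = k - (u-k)_-$ are obtained by the symmetric argument, replacing $u$ by $-u$ and $k$ by $-k$ where appropriate.
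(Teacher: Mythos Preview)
Your proposal is essentially the same strategy as the paper's, though you frame it from the opposite direction: you start with the $u_k$-inequality and reduce the test function toward $\varphi\chi_{[u>k]}$, whereas the paper starts from the $u$-inequality and inserts the specific admissible test function
\[
\varphi_h=\frac{\zeta\,([\![u]\!]_{\bar h}-k)_+}{([\![u]\!]_{\bar h}-k)_++\sigma},
\]
then passes $h\to 0$ and $\sigma\to 0$. This is exactly the ``Lipschitz cutoff of $(u-k)_+$ combined with time mollification'' you describe, and your pointwise nonlocal comparison is precisely the inequality
\[
\mathcal{E}(u_k,\zeta,t)\le \int_{\mathbb{R}^N}\int_{\mathbb{R}^N} U(x,y,t)\big[\zeta(x,t)\chi_{[u(x,t)>k]}-\zeta(y,t)\chi_{[u(y,t)>k]}\big]\,d\mu
\]
that the paper states (and your case analysis correctly verifies it).

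Two places in your write-up should be tightened. First, your reduction step for the nonlocal term is not just ``drop the part of $\varphi$ on $[u\le k]$'': you must also observe that $\mathcal{E}(u_k,\varphi\chi_{[u\le k]})\le 0$, which follows from the sign analysis $u_k(x)-u_k(y)=k-u(y)<0$ when $x\in[u\le k]$, $y\in[u>k]$ (you implicitly need this but do not say it). Second, the claim that ``the time term contributes nothing on $[u\le k]$'' is misleading: the set $[u\le k]$ moves in time, so $\iint_{[u\le k]}|k|^{q-1}k\,\partial_t\varphi$ is not zero. What actually happens---and what the paper's choice of $\varphi_h$ accomplishes cleanly---is that the mollified time term produces $\partial_t[\![u]\!]_h$ paired against a monotone function of $u$, and the limit yields exactly $-\iint|u_k|^{q-1}u_k\,\partial_t\zeta$ with the correct inequality sign; the paper defers this to \cite[Proposition 4.7]{BDG23}. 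Your sketch of the time argument is in the right spirit but would need the explicit test function above to be made rigorous.
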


\begin{proof}
We verify this result holds for subsolutions, while the claim for supersolutions can proceed in the same way. For $\sigma>0$, test the weak formulation \eqref{1.7} with function
\begin{align*}
\varphi_h:=\frac{\zeta([\![u]\!]_{\bar{h}}-k)_+}{([\![u]\!]_{\bar{h}}-k)_++\sigma},
\end{align*}	
where $\zeta\in W_0^{1,q+1}(0,T;L^{q+1}(E)) \cap L^p(0, T; W_0^{1, p}(E).$ The term including the time derivative, and the local term can be dealt with the same as in  \cite[Proposition 4.7]{BDG23}, we only concentrate on the nonlocal term. Letting $h\rightarrow 0$ and using Lemma \ref{lem-2-8} $(\rm{{\romannumeral 1}})$, we have
\begin{align*}
& \lim _{h \rightarrow 0} \int_{0}^{T}\int_{\mathbb{R}^N} \int_{\mathbb{R}^N} U(x,y,t)(\varphi_h(x,t)-\varphi_h(y,t))\,d\mu dt  \\
& = \int_{0}^{T}\int_{\mathbb{R}^N} \int_{\mathbb{R}^N} U(x,y,t)\left[\frac{\zeta(u-k)_+(x,t)}{(u-k)_+(x,t)+\sigma}-\frac{\zeta(u-k)_+(y,t)}{(u-k)_+(y,t)+\sigma}\right]\,d\mu dt.
\end{align*}
We shall send $\sigma\rightarrow 0$, and get
\begin{align*}
&-\iint_{E_T}  |u_k|^{q-1}u_k\partial_t \zeta\,dxdt+\iint_{E_T} |\nabla u|^{p-2}\nabla u\cdot\nabla\zeta\chi_{[u>k]}\,dxdt\\
&\ \ \ +\int_{0}^{T}\int_{\mathbb{R}^N} \int_{\mathbb{R}^N}U(x,y,t)\left[\zeta(x,t)\chi_{[u(x,t)>k]}-\zeta(y,t)\chi_{[u(y,t)>k]}\right]\,d\mu dt\leq 0.
\end{align*}
Moreover, one can check that
\begin{align*}
&\int_{0}^{T}\int_{\mathbb{R}^N} \int_{\mathbb{R}^N}|u_k(x,t)-u_k(y,t)|^{p-2}(u_k(x,t)-u_k(y,t))(\zeta(x,t)-\zeta(y,t))\,d\mu dt\\
\leq&\int_{0}^{T}\int_{\mathbb{R}^N} \int_{\mathbb{R}^N}U(x,y,t)\left[\zeta(x,t)\chi_{[u(x,t)>k]}-\zeta(y,t)\chi_{[u(y,t)>k]}\right]\,d\mu dt.
\end{align*}
Thus, we conclude that
\begin{align*}
&-\iint_{E_T} |u_k|^{q-1}u_k\partial_t \zeta\,dxdt+\iint_{E_T} |\nabla u_k|^{p-2}\nabla u_k\cdot\nabla\zeta\,dxdt\\
&\ \ \ +\int_{0}^{T}\int_{\mathbb{R}^N} \int_{\mathbb{R}^N}|u_k(x,t)-u_k(y,t)|^{p-2}(u_k(x,t)-u_k(y,t))(\zeta(x,t)-\zeta(y,t))\,d\mu dt\leq 0
\end{align*}
for every nonnegative $\zeta \in W^{1,q+1}_0(0,T;L^{q+1}(E)) \cap L^p(0,T;W_0^{1, p}(E))$.
\end{proof}

\subsection{Time continuity of weak solutions}

The forthcoming proposition tells the weak solutions of \eqref{1.1} belong to $C\left([0, T]; L^{q+1}(E)\right)$. We refer the  readers to \cite{N22} for details.
\begin{proposition}
\label{pro-3-3}
 Let $p>1$, $q>0$ and let $u\in L^\infty(0, T; L^{q+1}(E))\cap L^p(0, T; W_0^{1, p}(E))\cap L^\infty(0, T; L_{sp}^{p-1}(\mathbb{R}^N))$ satisfy the weak formula \eqref{3.2} for some $u_0\in L^{q+1}(E)$. Then there is a representative  $u\in C\left([0, T]; L^{q+1}(E)\right)$, indicating that
\begin{align*}
\lim_{t\rightarrow 0} \int_E |u(\cdot,t) -u_0|^{q+1}\,dx=0.
\end{align*}
\end{proposition}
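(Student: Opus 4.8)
\textbf{Proof proposal for Proposition \ref{pro-3-3}.}
The plan is to prove the time continuity by a mollification-in-time argument combined with an energy estimate, in the spirit of the analogous results for doubly nonlinear equations (e.g. \cite{N22, BDM13}). First I would fix the exponential time mollification $[\![\cdot]\!]_h$ from Lemma \ref{lem-2-8} and test the weak formulation \eqref{3.2} with a test function of the form $\zeta = \psi_\tau(t)\, \bigl([\![|u|^{q-1}u]\!]_h - |u_0|^{q-1}u_0\bigr)$ (or rather its $W_0^{1,p}$-admissible regularisation), where $\psi_\tau$ is a Lipschitz cutoff in time that is $1$ on $[0,\tau]$ and vanishes past $\tau+\delta$; the point of mollifying $|u|^{q-1}u$ rather than $u$ is that the time-derivative term $\partial_t(|u_0|^{q-1}u_0 - |u|^{q-1}u)$ pairs cleanly with it. Using Lemma \ref{lem-2-8}(ii) to rewrite $\partial_t[\![\cdot]\!]_h = \frac1h(\cdot - [\![\cdot]\!]_h)$, the parabolic term produces, after the standard manipulation, a quantity controlling $\int_E \bigl|[\![|u|^{q-1}u]\!]_h(\cdot,\tau) - |u_0|^{q-1}u_0\bigr|\cdot(\text{something comparable to }|u|)$, which by Lemma \ref{lem-2-1} (with $\alpha = q$, so $|b|^{q-1}b - |a|^{q-1}a$ is comparable to $(|a|+|b|)^{q-1}|b-a|$) and Lemma \ref{lem-2-2} is comparable to $\int_E \bm{h}(u(\cdot,\tau), u_0)\,dx$ up to lower order remainders.

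Next I would estimate the remaining terms. The spatial gradient term $\iint |\nabla u|^{p-2}\nabla u\cdot\nabla\zeta$ and the nonlocal term $\int_0^T \mathcal E(u,\zeta,t)\,dt$ are handled by passing $h\to0$ first: by Lemma \ref{lem-2-8}(i),(iv) the mollified quantities converge strongly in the relevant $L^p$ and $L^p(d\mu)$ spaces, so these terms tend to $\iint_{E\times(0,\tau)} |\nabla u|^{p-2}\nabla u\cdot\nabla(|u|^{q-1}u - |u_0|^{q-1}u_0)$ plus the analogous nonlocal integral, both of which are finite because $u\in L^p(0,T;W_0^{1,p}(E))\cap L^\infty(0,T;L^{q+1}(E))\cap L^\infty(0,T;L_{sp}^{p-1}(\mathbb R^N))$ and go to $0$ as $\tau\to0$ by absolute continuity of the integral. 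Combining, I obtain an inequality of the form
\begin{align*}
\int_E \bm{h}\bigl(u(\cdot,\tau),u_0\bigr)\,dx \leq \omega(\tau) \to 0 \quad \text{as } \tau\to0,
\end{align*}
and then the lower bound in Lemma \ref{lem-2-2}, $\bm{h}(a,b)\gtrsim (|a|+|b|)^{q-1}|a-b|^2$, upgraded via the elementary inequality $|a-b|^{q+1}\lesssim (|a|+|b|)^{q-1}|a-b|^2$ when $q\geq1$ (and a direct comparison $|a-b|^{q+1}\lesssim \bm{h}(a,b)$ when $0<q<1$, again through Lemma \ref{lem-2-1}), yields $\int_E|u(\cdot,\tau)-u_0|^{q+1}\,dx \to 0$. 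This gives right-continuity at $t=0$; continuity at a general $t_0\in(0,T]$ follows by the same computation with $u_0$ replaced by $u(\cdot,t_0)$ on a shrinking interval around $t_0$, or by translation invariance of the equation in time, and the existence of a genuine representative in $C([0,T];L^{q+1}(E))$ then follows by a density/uniqueness-of-limit argument.

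The main obstacle I expect is making the test function $\zeta = \psi_\tau [\![|u|^{q-1}u]\!]_h - \psi_\tau|u_0|^{q-1}u_0$ genuinely admissible: it must lie in $W^{1,q+1}(0,T;L^{q+1}(E))\cap L^p(0,T;W_0^{1,p}(E))$, which requires knowing that $|u|^{q-1}u$ (equivalently its mollification) has enough spatial Sobolev regularity and the right zero boundary values, and that the power nonlinearity does not destroy $W_0^{1,p}$-membership — this is delicate precisely in the fast-diffusion range $q>p-1$ and is the step where the precise function-space hypotheses and the algebraic Lemmas \ref{lem-2-1}--\ref{lem-2-2} must be used carefully, possibly through a further truncation $T_M(u)$ and a limit $M\to\infty$ controlled by the $L^{q+1}$ and tail bounds. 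Since the statement explicitly defers to \cite{N22} for the details, I would structure the write-up to isolate this admissibility issue, carry out the energy identity for the truncated/mollified problem, and then remove the regularisations using Lemma \ref{lem-2-8} and dominated convergence.
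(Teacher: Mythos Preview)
The paper does not actually prove Proposition~\ref{pro-3-3}; it simply refers the reader to \cite{N22} for the details. Your sketch is in the same spirit as the argument in that reference --- time mollification plus an energy identity yielding control of the boundary term via the functional $\bm h$ --- so in that sense you are aligned with what the paper invokes.

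Two technical points in your sketch deserve correction. First, the test function built from $[\![|u|^{q-1}u]\!]_h$ is the wrong choice: the admissibility problem you flag is real and avoidable. The standard route (as in \cite{N22,BDL21}) is to test with $\zeta=\psi_\tau(t)\bigl([\![u]\!]_h-\widetilde u_0\bigr)$, where $\widetilde u_0$ is a $W_0^{1,p}$-approximation of $u_0$. Then $\zeta\in L^p(0,T;W_0^{1,p}(E))$ is immediate from Lemma~\ref{lem-2-8}(iv), and the time term produces exactly $\int_E\bm h\bigl(u(\cdot,\tau),\widetilde u_0\bigr)\,dx$ after passing $h\to0$ and $\delta\to0$ in the cutoff, since $(|u|^{q-1}u-|u_0|^{q-1}u_0)\partial_t[\![u]\!]_h$ pairs correctly with the definition~\eqref{2.1}. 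No truncation $T_M$ is needed.

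Second, your claimed pointwise inequality $|a-b|^{q+1}\lesssim\bm h(a,b)$ for $0<q<1$ is false: by Lemma~\ref{lem-2-2}, $\bm h(a,b)\approx(|a|+|b|)^{q-1}|a-b|^2$, and for $q<1$ one has $(|a|+|b|)^{q-1}|a-b|^2\leq|a-b|^{q+1}$, the reverse of what you need. The fix in the literature is either to combine $\int_E\bm h(u(\tau),u_0)\,dx\to0$ with the uniform bound $u\in L^\infty(0,T;L^{q+1}(E))$ via a Vitali-type argument (a.e.\ convergence along subsequences plus uniform integrability), or to control instead $\int_E\bigl||u|^{q-1}u-|u_0|^{q-1}u_0\bigr|^{\frac{q+1}{q}}dx$ and convert via Lemma~\ref{lem-2-1} with exponent $1/q$.
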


\subsection {Comparison principle} 

Let us denote the Lipschitz function  $H_\delta(s)$ by
\begin{align*}
H_\delta(s):=\left\{\begin{array}{cl}
1, & \text { for } s \geq \delta, \\
\frac{1}{\delta} s, & \text { for } 0<s<\delta, \\
0, & \text { for } s \leq 0 .
\end{array}\right.
\end{align*}
Define the functions
\begin{align*}
h_\delta(z,z_0):=\int_{z_0}^z H_\delta(s-z_0) q s^{q-1}\,ds\ \ \text { for } z, z_0 \in \mathbb{R}_{\geq 0},
\end{align*}
\begin{align*}
\widehat{h}_\delta(z, z_0):=\int_{z_0}^z \widehat{H}_\delta(s-z_0) q s^{q-1}\,ds \ \  \text { for } z, z_0 \in \mathbb{R}_{\geq 0},
\end{align*}
where $\widehat{H}_\delta(s):=-H_\delta(-s)$ is the odd reflection of $H_\delta$.
In the rest of this section, we write
\begin{align*}
&V(x,y,t)=|v(x,t)-v(y,t)|^{p-2}(v(x,t)-v(y,t)),\\ &W(x,y,t)=|w(x,t)-w(y,t)|^{p-2}(w(x,t)-w(y,t)).
\end{align*}

Now, we provide a comparison principle for nonnegative weak solutions to \eqref{1.1} and \eqref{3.1}. The comparison of two weak solutions in $\mathbb{R}^N\backslash E\times(0, T)$ can be directly checked through the boundary value of \eqref{3.1} and the non-negativity of weak solutions to \eqref{1.1}. Hence, we just need to give the assumption on the initial value to develop the comparison principle on cylinders.

\begin{proposition}
\label{pro-3-4}
Let $p>1, q>0$, and let $w$ be a nonnegative weak solution to $\eqref{1.1}$ and $v$ be a nonnegative weak solution to \eqref{3.1} with $v_0 \geq 0$. If $v_0 \leq w(\cdot, 0)$ a.e. in $E$, then $v \leq w$ a.e. in $E_T$. 
\end{proposition}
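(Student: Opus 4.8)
The plan is to prove $v \le w$ by testing the weak formulations of the two equations against a common, carefully regularized test function that isolates the region $\{v > w\}$, and then to use the monotonicity of the principal part together with the comparison of the time terms to force $(v-w)_+ \equiv 0$. Concretely, I would work with the time mollifications $[\![\cdot]\!]_h$ of Lemma \ref{lem-2-8} to give rigorous meaning to the time-derivative term (weak solutions need not be differentiable in $t$), subtract the mollified weak formulation \eqref{3.2} for $v$ from the one for $w$, and choose as test function something like $\varphi = H_\delta\big([\![v]\!]_h - [\![w]\!]_h\big)\,\chi_{(0,\tau)}$ for an arbitrary $\tau \in (0,T]$, where $H_\delta$ is the Lipschitz cutoff already introduced before the statement. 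This is admissible: it is nonnegative, vanishes on $\mathbb{R}^N\setminus E$ (since both $v$ and $w$ vanish there, or $w \ge 0 = v$ there), and lies in the required space.

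The three terms are then handled separately. For the \emph{time term}, the key is that $\partial_t(|v|^{q-1}v - |w|^{q-1}w)$ tested against $H_\delta(v-w)$ produces, in the limit $\delta \to 0$, the nonnegative quantity $\int_E (|v|^{q-1}v - |w|^{q-1}w)_+ \cdot \mathbf{1}_{\{v>w\}}$-type boundary contribution at time $\tau$, via the functions $h_\delta, \widehat h_\delta$ defined above — this is exactly why those auxiliary functions were set up, and one uses the monotonicity of $s \mapsto |s|^{q-1}s$ so that the initial-time contribution is controlled by $v_0 \le w(\cdot,0)$ and hence is $\le 0$. For the \emph{local term}, $\big(|\nabla v|^{p-2}\nabla v - |\nabla w|^{p-2}\nabla w\big)\cdot \nabla H_\delta(v-w) = H_\delta'(v-w)\,\big(|\nabla v|^{p-2}\nabla v - |\nabla w|^{p-2}\nabla w\big)\cdot \nabla(v-w) \ge 0$ pointwise by the standard monotonicity of the $p$-Laplacian and $H_\delta' \ge 0$. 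For the \emph{nonlocal term}, one needs that $\big(V(x,y,t) - W(x,y,t)\big)\big(H_\delta(v-w)(x) - H_\delta(v-w)(y)\big) \ge 0$; this follows because $\xi \mapsto |\xi|^{p-2}\xi$ is monotone and $H_\delta$ is nondecreasing, so the sign of $V-W$ as a function of the pair $(v(x)-v(y), w(x)-w(y))$ is compatible with the sign of the $H_\delta$-difference — a short case analysis on the orderings of $v(x),v(y),w(x),w(y)$, using $K \ge 0$, gives nonnegativity of the double integral.

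Putting the three estimates together, after sending $h \to 0$ (using Lemma \ref{lem-2-8}(i),(iv),(v) for the convergence of the mollifications and their gradients, and the continuity in $t$ from Proposition \ref{pro-3-3}) and then $\delta \to 0$, one arrives at an inequality of the form
\begin{align*}
\int_E \big(|v(\cdot,\tau)|^{q-1}v(\cdot,\tau) - |w(\cdot,\tau)|^{q-1}w(\cdot,\tau)\big)_+ \, dx \le 0
\end{align*}
for a.e. $\tau \in (0,T]$, where I have used that the $\delta\to 0$ limit of the mollified time term dominates this expression while the local and nonlocal terms have the right (nonnegative) sign and the initial term is nonpositive. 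Since $s\mapsto |s|^{q-1}s$ is strictly increasing, this yields $v(\cdot,\tau) \le w(\cdot,\tau)$ a.e. in $E$ for a.e. $\tau$, i.e. $v \le w$ a.e. in $E_T$.

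\textbf{Main obstacle.} I expect the delicate point to be the rigorous treatment of the time term: commuting the mollification with $H_\delta$, passing to the limit $h\to 0$ in $\int_0^\tau \int_E \partial_t\big([\![|v|^{q-1}v]\!]_h - [\![|w|^{q-1}w]\!]_h\big)H_\delta([\![v]\!]_h - [\![w]\!]_h)$, and extracting the correct boundary term at $t=\tau$ with the right sign while absorbing the $t=0$ term using $v_0 \le w(\cdot,0)$. This requires the identity in Lemma \ref{lem-2-8}(ii) together with a Riemann–Stieltjes/chain-rule manipulation on $h_\delta$, $\widehat h_\delta$, and care that all intermediate integrals are finite (which is where $v,w \in C([0,T];L^{q+1}(E))$ and the tail control $L^\infty(0,T;L^{p-1}_{sp})$ enter). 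The nonlocal sign analysis is conceptually easy but tedious; the local term is routine.
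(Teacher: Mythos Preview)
Your overall strategy is sound in spirit, and your handling of the local and nonlocal terms is correct: the monotonicity of $\xi\mapsto|\xi|^{p-2}\xi$ combined with the monotonicity of $H_\delta$ does give the right sign in both cases (for the nonlocal term, note that $(v(x)-v(y))-(w(x)-w(y))=(v-w)(x)-(v-w)(y)$, so both factors share the sign of this difference). The problem is in the time term, and it is more than just ``delicate'': your proposed test function $H_\delta([\![v]\!]_h-[\![w]\!]_h)$ does not lead to a manageable error. When you compute $-\iint (v^q-w^q)\,\partial_t\zeta$ with $\zeta=H_\delta([\![v]\!]_h-[\![w]\!]_h)\psi$, Lemma \ref{lem-2-8}(ii) gives $\partial_t([\![v]\!]_h-[\![w]\!]_h)=\tfrac{1}{h}\big[(v-w)-([\![v]\!]_h-[\![w]\!]_h)\big]$, and this produces a term
\[
-\iint (v^q-w^q)\,H_\delta'\big([\![v]\!]_h-[\![w]\!]_h\big)\,\tfrac{1}{h}\big[(v-w)-([\![v]\!]_h-[\![w]\!]_h)\big]\,\psi\,dxdt
\]
which has no sign (the factor $(v-w)-([\![v]\!]_h-[\![w]\!]_h)$ is not correlated with $v^q-w^q$), and since $H_\delta'$ is of order $1/\delta$ you cannot simply pass to the limit $h\to 0$ first. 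The analogous computation in Lemma \ref{lem-3-5} works precisely because the comparison function $\widetilde v$ there is \emph{time-independent}, so only $\partial_t[\![v]\!]_h$ appears and the sign trick in \eqref{3.3} applies.

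The paper resolves this by Otto's doubling-of-variables technique \cite{O96}: one works in $E\times(0,T)^2$, extending $v=v(x,t_1)$ and $w=w(x,t_2)$ so that each becomes constant in the other's time variable. For fixed $t_2$, Lemma \ref{lem-3-5} is applied with $\widetilde v(x)=w(x,t_2)$ (now time-independent in $t_1$); for fixed $t_1$, Lemma \ref{lem-3-6} is applied with $\widetilde w(x)=v(x,t_1)$. Integrating in the remaining variable and adding gives \eqref{3.6}, where the local and nonlocal monotonicity arguments (essentially yours) dispose of the elliptic terms, and one is left with the clean time inequality \eqref{3.8}. The conclusion then follows the purely local argument of \cite[Proposition 4.16]{BDG23}. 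This doubling step is the key idea your proposal is missing.
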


For proving Proposition \ref{pro-3-4}, we discuss the result for the function $v$ first.
\begin{lemma}
\label{lem-3-5}
Let $p>1$, $q>0$, and let $v$ be a nonnegative weak solution to \eqref{3.1} with $v_0 \geq 0$. Suppose that $\widetilde{v} \in L^{q+1}(E) \cap W_0^{1, p}(E)$ is a nonnegative function. For any $\psi \in C_0^{\infty}(\mathbb{R}^N\times(0,T) )$, there holds that
\begin{align*}
&\iint_{E_T}-h_\delta(v,\widetilde{v}) \partial_t \psi+|\nabla v|^{p-2}\nabla v\cdot \nabla\left[H_\delta(v-\widetilde{v}) \psi\right]\,dxdt\\
 &+\int_{0}^{T}\int_{\mathbb{R}^N}\int_{\mathbb{R}^N} V(x,y,t) [H_\delta(v(x,t)-\widetilde{v}(x))\psi (x,t)-H_\delta(v(y,t)-\widetilde{v}(y))\psi (y,t)]\,d\mu dt=0.
\end{align*}
\end{lemma}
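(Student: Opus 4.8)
\textbf{Proof plan for Lemma \ref{lem-3-5}.}

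The plan is to test the weak formulation \eqref{3.2} for $v$ with a regularized version of the function $\zeta = H_\delta(v - \widetilde v)\psi$, and then pass to the limit in the regularization. The difficulty is that $v$ itself is only known to lie in $C([0,T];L^{q+1}(E))\cap L^p(0,T;W_0^{1,p}(E))$ and does not admit a time derivative, so $H_\delta(v-\widetilde v)$ is not directly admissible as a test function (it lacks the $W^{1,q+1}$-in-time regularity required in the definition). To get around this I would replace $v$ inside the cutoff $H_\delta$ by its exponential time mollification $[\![v]\!]_h$ (or, to keep the endpoint $t=T$ clean, $[\![v]\!]_{\bar h}$), i.e. test with
\begin{align*}
\zeta_h := H_\delta\big([\![v]\!]_{\bar h}-\widetilde v\big)\,\psi,
\end{align*}
which by Lemma \ref{lem-2-8} (ii)--(iv) does have the needed regularity in both space and time, with $\partial_t\zeta_h$ expressible through $\partial_t[\![v]\!]_{\bar h}=\tfrac1h([\![v]\!]_{\bar h}-v)$.

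Next I would analyze the three groups of terms separately. \emph{Time term:} using the weak formula's structure $(|v_0|^{q-1}v_0 - |v|^{q-1}v)\partial_t\zeta_h$, a Steklov/mollification argument as in \cite{BDG23, N22} turns the parabolic term into $-\iint h_\delta^{(h)}(v,\widetilde v)\partial_t\psi$ plus an error, where the key monotonicity observation is that $s\mapsto H_\delta(s-\widetilde v)$ is nondecreasing, so the commutator $\iint (|v|^{q-1}v - |[\![v]\!]_{\bar h}|^{q-1}[\![v]\!]_{\bar h})\,H_\delta'([\![v]\!]_{\bar h}-\widetilde v)\,\tfrac1h([\![v]\!]_{\bar h}-v)\,\psi$ has a favorable sign (nonnegative, since $q>0$ makes $z\mapsto|z|^{q-1}z$ increasing); discarding it gives an inequality in the right direction, and combined with the reverse choice one recovers the identity in the limit. \emph{Local and nonlocal terms:} here $H_\delta([\![v]\!]_{\bar h}-\widetilde v)\to H_\delta(v-\widetilde v)$ a.e. and boundedly as $h\to 0$ by Lemma \ref{lem-2-8}(i), while $\nabla[\![v]\!]_{\bar h}=[\![\nabla v]\!]_{\bar h}\to\nabla v$ in $L^p$ by Lemma \ref{lem-2-8}(iv); together with the uniform bound $|H_\delta|\le 1$ and dominated convergence (for the nonlocal integral, using $v\in L^\infty(0,T;L^{p-1}_{sp}(\mathbb R^N))$ and the kernel bounds to dominate, exactly as in Proposition \ref{pro-3-2}), both terms converge to their unmollified counterparts.

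The main obstacle I anticipate is the rigorous justification of the time term: one must show that the mollified primitive converges, $h_\delta^{(h)}(v,\widetilde v)\to h_\delta(v,\widetilde v)$ in $L^1(E_T)$ (using $v\in L^{q+1}$ and the continuity of $s\mapsto\int_{\widetilde v}^s H_\delta(r-\widetilde v)qr^{q-1}dr$ with at most $(q+1)$-growth), \emph{and} that the sign-definite commutator error genuinely vanishes or can be dropped consistently from both the sub- and super-direction tests so that the two one-sided inequalities collapse to the stated equality. This is where I would invoke the computations of \cite[Proposition 4.7]{BDG23} essentially verbatim for the local part, since the nonlocal term plays no role in the time analysis. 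A minor secondary point is handling the principal-value nature of $\mathcal L$: one works with the symmetrized double-integral form $\mathcal E(v,\zeta_h,t)$ throughout, which is already how \eqref{3.2} is phrased, so no extra care beyond the domination argument is needed. Once all three limits are taken, collecting the terms yields precisely the claimed identity.
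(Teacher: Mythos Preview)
Your proposal is correct and matches the paper's proof essentially step for step: the paper also tests \eqref{3.2} with $\zeta=H_\delta([\![v]\!]_h-\widetilde v)\psi$, uses Lemma \ref{lem-2-8} (i), (iv) to pass to the limit in the local and nonlocal terms, handles the time term exactly via the sign of the commutator $([\![v]\!]_h^q-v^q)\,H_\delta'\,\tfrac1h(v-[\![v]\!]_h)\psi$ (obtaining one inequality), and then repeats the computation with the reversed mollification $[\![v]\!]_{\bar h}$ to get the opposite inequality. The only cosmetic difference is that the paper starts with $[\![v]\!]_h$ rather than $[\![v]\!]_{\bar h}$ and explicitly notes that $\zeta(\cdot,0)=0$ kills the $v_0$ term, which in your setup is automatic from $\operatorname{supp}\psi\subset\subset(0,T)$.
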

\begin{proof}
According to $v=0$ in $\mathbb{R}^N\backslash E\times(0,T)$ along with $\widetilde{v}=0$ in $\mathbb{R}^N\backslash E$, we get
\begin{align*}
H_\delta(v(\cdot, t)-\widetilde{v}(\cdot))=0 \quad \text { in } \mathbb{R}^N\backslash E
\end{align*}
for a.e. $t\in(0,T)$, which implies $\zeta=H_\delta\left([\![v]\!]_h-\widetilde{v}\right) \psi$ is a admissible test function in \eqref{3.2}. Due to $\zeta(\cdot,0)=0$, the term including $v_0$ vanishes.
In light of Lemma \ref{lem-2-8} $(\rm{{\romannumeral 1}})$ and $(\rm{{\romannumeral 4}})$, we have 
\begin{align*}
H_\delta([\![v]\!]_h-\widetilde{v}) \psi \rightarrow H_\delta(v-\widetilde{v})\psi  \quad \text{as } h\rightarrow 0
\end{align*}
and
\begin{align*}
\nabla[H_\delta([\![v]\!]_h-\widetilde{v}) \psi] \rightarrow \nabla[H_\delta(v-\widetilde{v})\psi]  \quad \text{as } h\rightarrow 0.
\end{align*}
Then we obtain
\begin{align*}
\lim_{h \rightarrow 0} \iint_{E_T} |\nabla v|^{p-2}\nabla v\cdot\nabla\zeta\,dxdt
=\iint_{E_T} |\nabla v|^{p-2}\nabla v\cdot \nabla[H_\delta(v-\widetilde{v})\psi]\,dxdt
\end{align*}
and 
\begin{align*}
&\lim_{h \rightarrow 0}\int_{0}^{T}\int_{\mathbb{R}^N}\int_{\mathbb{R}^N}V(x,y,t)(\zeta(x,t)-\zeta(y,t))\,d\mu dt\\
=&\int_{0}^{T}\int_{\mathbb{R}^N}\int_{\mathbb{R}^N}V(x,y,t)[H_\delta(v(x,t)-\widetilde{v}(x))\psi (x,t)-H_\delta(v(y,t)-\widetilde{v}(y))\psi(y,t)]\,d\mu dt.
\end{align*}
Thanks to Lemma \ref{lem-2-8} $(\rm{{\romannumeral 2}})$, we estimate the time part as 
\begin{align}
\label{3.3}
\iint_{E_T} & -v^q \partial_t \zeta\,dxdt=\iint_{E_T}(-[\![v]\!]_h^q+[\![v]\!]_h^q-v^q) \partial_t \zeta\,dxdt\nonumber\\
=&-\iint_{E_T} [\![v]\!]_h^q \partial_t \zeta\,dxdt+\iint_{E_T}([\![v]\!]_h^q-v^q) H_\delta([\![v]\!]_h-\widetilde{v}) \partial_t \psi\,dxdt\nonumber\\
&+\iint_{E_T}([\![v]\!]_h^q-v^q) H_\delta^{\prime}([\![v]\!]_h-\widetilde{v}) \frac{1}{h}(v-[\![v]\!]_h) \psi\,dxdt\nonumber\\
\leq & \iint_{E_T} \partial_t [\![v]\!]_h^q \zeta\,dxdt +\iint_{E_T}\left([\![v]\!]_h^q-v^q\right) H_\delta([\![v]\!]_h-\widetilde{v}) \partial_t \psi\,dxdt\nonumber\\
=&-\iint_{E_T} h_\delta([\![v]\!]_h, \widetilde{v}) \partial_t \psi\,dxdt +\iint_{E_T}([\![v]\!]_h^q-v^q) H_\delta([\![v]\!]_h-\widetilde{v}) \partial_t \psi\,dxdt.
\end{align}
Observe that the second term on the right-hand side of \eqref{3.3} vanishes as $h\rightarrow 0$ by Lemma \ref{lem-2-8} $(\rm{{\romannumeral 1}})$. In addition, we can find
\begin{align*}
\left|h_\delta([\![v]\!]_h, \widetilde{v})-h_\delta(v, \widetilde{v})\right|=\left|\int_v^{[\![v]\!]_h} H_\delta(s-\widetilde{v}) q s^{q-1}\,ds\right|\leq\left|[\![v]\!]_h^q-v^q\right|,
\end{align*}
which combines with Lemma \ref{lem-2-8} $(\rm{{\romannumeral 1}})$ leads to
\begin{align*}
-\lim_{h \rightarrow 0} \iint_{E_T} h_\delta([\![v]\!]_h, \widetilde{v}) \partial_t \psi\,dxdt =-\iint_{E_T} h_\delta(v, \widetilde{v}) \partial_t \psi\,dxdt .
\end{align*}
So far, we get the desired result with ``$\geq$ " by collecting the above estimates. Analogously, we can prove the reverse inequality by choosing $\zeta=H_\delta([\![v]\!]_{\bar{h}}-\widetilde{v}) \psi$ as a test function.
\end{proof}

Indeed, if taking $\zeta=\widehat{H}_\delta([\![w]\!]_h-\widetilde{w}) \psi$ as a test function in \eqref{3.2} and running similarly, we can obtain the result for the function $w$.

\begin{lemma}
\label{lem-3-6}
Let $p>1, q>0$, and let $w$ be a nonnegative weak solution to $\eqref{1.1}$. Suppose that $\widetilde{w} \in L^{q+1}(E) \cap W_0^{1, p}(E)$ is a nonnegative function. For any $\psi \in C_0^{\infty}\left(\mathbb{R}^N\times(0,T) \right)$, there holds that
\begin{align*}
&\iint_{E_T}-\widehat{h}_\delta(w, \widetilde{w}) \partial_t \psi+|\nabla w|^{p-2}\nabla w\cdot\nabla\left(\widehat{H}_\delta(w-\widetilde{w}) \psi\right)\,dxdt\\
&+\int_{0}^{T}\int_{\mathbb{R}^N}\int_{\mathbb{R}^N} W(x,y,t) \left(\widehat{H}_\delta(w(x,t)-\widetilde{w}(x)) \psi(x,t)-\widehat{H}_\delta(w(y,t)-\widetilde{w}(y)) \psi(y,t)\right)\,d\mu dt=0.
\end{align*}
\end{lemma}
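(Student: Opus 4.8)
The plan is to reproduce, with the obvious modifications, the proof of Lemma~\ref{lem-3-5}. The structural changes are that $w$ is a weak solution of \eqref{1.1} rather than of the Cauchy--Dirichlet problem \eqref{3.1}, so one tests the weak formulation of \eqref{1.1} — which for a weak solution holds with equality and, being linear in the test function, remains valid for test functions of arbitrary sign — no initial-datum term being present; and the truncation $H_\delta$ and its primitive $h_\delta$ are replaced throughout by their odd counterparts $\widehat H_\delta$ and $\widehat h_\delta$. One first records that $\widehat H_\delta$ is Lipschitz with $\widehat H_\delta(0)=0$, that $\widehat H_\delta'\ge 0$, and that $\widehat h_\delta(z,z_0)\ge 0$ for $z,z_0\ge 0$, exactly as for $H_\delta,h_\delta$, so that every sign consideration below is unchanged. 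Then, for fixed $\delta>0$ and a nonnegative $\psi\in C_0^\infty(\mathbb{R}^N\times(0,T))$, one inserts $\zeta=\widehat H_\delta\big([\![w]\!]_h-\widetilde w\big)\psi$ as test function. Unlike in Lemma~\ref{lem-3-5}, where $v\equiv 0$ in $\mathbb{R}^N\setminus E$ automatically makes $\zeta$ vanish there, here admissibility forces us to take $\psi$ with spatial support in $E$ — the only case used in the sequel — and then, since $\widetilde w\in W_0^{1,p}(E)$ and $[\![w]\!]_h$ inherits the requisite spatial and temporal regularity from Lemma~\ref{lem-2-8}, $\zeta$ is admissible and vanishes at $t=0$ and $t=T$ because $\psi$ is compactly supported in time.

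Passing to the limit $h\to 0$ in the diffusion and nonlocal terms is carried out exactly as in Lemma~\ref{lem-3-5}. By Lemma~\ref{lem-2-8}(i),(iv), $[\![w]\!]_h\to w$ and $\nabla[\![w]\!]_h\to\nabla w$ strongly in $L^p(E_T)$, whence $\nabla\big(\widehat H_\delta([\![w]\!]_h-\widetilde w)\psi\big)\to\nabla\big(\widehat H_\delta(w-\widetilde w)\psi\big)$ in $L^p(E_T)$ and the local integral converges. For the nonlocal integral, $\widehat H_\delta([\![w]\!]_h-\widetilde w)\psi\to\widehat H_\delta(w-\widetilde w)\psi$ boundedly and, along a subsequence, almost everywhere, while the integrand is dominated, uniformly in $h$, by a fixed $d\mu\,dt$-integrable function — obtained by splitting the $(x,y)$-domain into the part where both points lie near $\operatorname{supp}\psi$ (controlled by $w\in L^p(0,T;W^{1,p}(E))$) and its complement (controlled by the tail bound $w\in L^\infty(0,T;L_{sp}^{p-1}(\mathbb{R}^N))$) — so dominated convergence applies.

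The crux is the term with the time derivative, which I would treat along the lines of \eqref{3.3}. Writing $-\iint_{E_T}w^q\partial_t\zeta=\iint_{E_T}\partial_t\big([\![w]\!]_h^q\big)\zeta-\iint_{E_T}\big(w^q-[\![w]\!]_h^q\big)\partial_t\zeta$, an integration by parts in the first integral — via $\partial_t\big(\widehat h_\delta([\![w]\!]_h,\widetilde w)\big)=q[\![w]\!]_h^{q-1}\widehat H_\delta([\![w]\!]_h-\widetilde w)\partial_t[\![w]\!]_h$ — turns it into $-\iint_{E_T}\widehat h_\delta([\![w]\!]_h,\widetilde w)\partial_t\psi$, and expanding $\partial_t\zeta=\widehat H_\delta([\![w]\!]_h-\widetilde w)\partial_t\psi+\widehat H_\delta'([\![w]\!]_h-\widetilde w)\frac1h(w-[\![w]\!]_h)\psi$ by Lemma~\ref{lem-2-8}(ii) isolates the residual term $-\iint_{E_T}\big(w^q-[\![w]\!]_h^q\big)\widehat H_\delta'([\![w]\!]_h-\widetilde w)\frac1h(w-[\![w]\!]_h)\psi$. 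Since $t\mapsto t^q$ is nondecreasing on $[0,\infty)$ one has $\big(w^q-[\![w]\!]_h^q\big)(w-[\![w]\!]_h)\ge 0$, so, with $\widehat H_\delta'\ge 0$ and $\psi\ge 0$, the residual is $\le 0$ and may be discarded; since moreover $\iint_{E_T}\big(w^q-[\![w]\!]_h^q\big)\widehat H_\delta([\![w]\!]_h-\widetilde w)\partial_t\psi\to 0$ by Lemma~\ref{lem-2-8}(i), letting $h\to 0$ in the weak formulation shows that the left-hand side of the asserted identity is $\ge 0$. Repeating the computation with $[\![w]\!]_{\bar h}$ in place of $[\![w]\!]_h$ flips the sign of $\partial_t[\![w]\!]_{\bar h}$ (Lemma~\ref{lem-2-8}(ii)), hence of the residual, and shows the left-hand side is $\le 0$; combining the two yields the identity for every nonnegative $\psi$, and, since it is linear in $\psi$, writing an arbitrary $\psi\in C_0^\infty(\mathbb{R}^N\times(0,T))$ as a difference of two nonnegative functions in $C_0^\infty$ extends it to all admissible $\psi$. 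I expect the only genuinely delicate step to be the construction of the $h$-uniform integrable majorant for the nonlocal term; but this is word-for-word the estimate already carried out in Lemma~\ref{lem-3-5} and relies solely on the tail integrability built into the definition of weak solution.
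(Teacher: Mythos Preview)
Your approach is the same as the paper's—test with $\zeta=\widehat H_\delta([\![w]\!]_h-\widetilde w)\psi$, pass to the limit in the elliptic and nonlocal terms as in Lemma~\ref{lem-3-5}, and handle the time term by the one-sided trick with $[\![\cdot]\!]_h$ and $[\![\cdot]\!]_{\bar h}$—and the computations you outline are correct.

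There is, however, one genuine oversight in your admissibility discussion. You write that ``admissibility forces us to take $\psi$ with spatial support in $E$ — the only case used in the sequel''. Both claims are wrong. In the application (the proof of Proposition~\ref{pro-3-4}) the function $\psi$ depends only on the time variables, so it certainly does \emph{not} have compact spatial support in $E$. The correct reason the test function is admissible is the structural feature of $\widehat H_\delta$: since $\widehat H_\delta(s)=-H_\delta(-s)$ vanishes for $s\ge 0$, and since $w\ge 0$ while $\widetilde w\in W_0^{1,p}(E)$ vanishes in $\mathbb{R}^N\setminus E$, one has $[\![w]\!]_h-\widetilde w=[\![w]\!]_h\ge 0$ outside $E$, whence $\widehat H_\delta([\![w]\!]_h-\widetilde w)=0$ in $\mathbb{R}^N\setminus E$. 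This is the exact analogue of the observation opening the proof of Lemma~\ref{lem-3-5}, and it is precisely why the odd reflection $\widehat H_\delta$ (rather than $H_\delta$) is paired with $w$. Once you replace your restriction on $\psi$ by this remark, the rest of your argument goes through unchanged.
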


Now we are ready to prove Proposition \ref{pro-3-4} by using the argument proposed in \cite{O96}.

\begin{proof}[\textbf{Proof of Proposition \ref{pro-3-4}}]
We set
\begin{align*}
(x,t_1,t_2) \in \widetilde{Q}:=E \times(0,T)^2.
\end{align*}
Let $0\leq\psi \in C_0^{\infty}((0,T)^2)$, and extend functions $v$ and $w$ to $\widetilde{Q}$ with
\begin{align*}
v(x,t_1,t_2):=v(x,t_1), \quad w(x,t_1,t_2):=w(x,t_2).
\end{align*}
For fixed $\delta>0$ and a.e. $t_2 \in(0, T)$, we note that $H_\delta$, $\widetilde{v}(x)=w(x,t_2)=:w_{t_2}(x)$, $\widetilde{v}(y)=w(y,t_2)=:w_{t_2}(y)$, and $\psi_{t_2}(t_1):=\psi(t_1, t_2)$ for $t_1\in(0,T)$ are allowed in Lemma \ref{lem-3-5} based on the fact $v=0$ in $\mathbb{R}^N\backslash E\times(0,T]$. Thus, it holds that
\begin{align}
\label{3.4}
&\iint_{E_T}-h_\delta(v,w_{t_2}) \partial_{t_1} \psi_{t_2}+
|\nabla v|^{p-2}\nabla v\cdot \nabla\left[H_\delta(v(x,t_1)-w_{t_2}) \psi_{t_2}\right]\,dxdt_1\nonumber\\
&+\int_{0}^{T}\int_{\mathbb{R}^N}\int_{\mathbb{R}^N} V(x,y,t_1) [H_\delta(v(x,t_1)-w_{t_2}) -H_\delta(v(y,t_1)-w_{t_2}) ]\psi_{t_2}\,d\mu dt_1=0.
\end{align}
Likewise, for fixed $\delta>0$ and a.e. $t_1 \in(0, T)$, we note that $\widehat{\mathcal{H}}_\delta$, $\widetilde{w}(x)=v(x,t_1)=:v_{t_1}(x)$, $\widetilde{w}(y)=v(y,t_1)=:v_{t_1}(y)$, and $\psi_{t_1}(t_2):=\psi(t_1,t_2)$ for $t_2\in(0,T)$ are permitted in Lemma \ref{lem-3-6} due to $v=0$ in $\mathbb{R}^N\backslash E\times(0,T]$. Subsequently, we get
\begin{align}
\label{3.5}
&\iint_{E_T}-\widehat{h}_\delta(w,v_{t_1}) \partial_{t_2} \psi_{t_1}+
|\nabla w|^{p-2}\nabla w\cdot\nabla\left(\widehat{H}_\delta(w(x,t_2)-v_{t_1}) \psi_{t_1}\right)\,dxdt_2\nonumber\\
&+\int_{0}^{T}\int_{\mathbb{R}^N}\int_{\mathbb{R}^N} W(x,y,t_2) \left(\widehat{H}_\delta(w(x,t_2)-v_{t_1}) -\widehat{H}_\delta(w(y,t_2)-v_{t_1}) \right)\psi_{t_1}\,d\mu dt_2=0.
\end{align}
Integrating \eqref{3.4} over $t_2\in(0,T)$, \eqref{3.5} over $t_1\in(0,T)$, and adding two integral equalities yields that
\begin{align}
\label{3.6}
&\iiint_{\widetilde{Q}} -\left(h_\delta(v,w) \partial_{t_1} \psi+\widehat{h}_\delta(w,v) \partial_{t_2} \psi\right)\,dxdt_1dt_2 \nonumber\\
&+\iiint_{\widetilde{Q}}\left(|\nabla v|^{p-2}\nabla v-|\nabla w|^{p-2}\nabla w\right)\cdot \nabla\left[H_\delta(v-w) \right]\psi\,dxdt_1dt_2\nonumber\\
&+\int_{0}^{T}\!\int_{0}^{T}\!\int_{\mathbb{R}^N}\!\int_{\mathbb{R}^N}\left[V(x,y,t_1,t_2)-W(x,y,t_1,t_2)\right]\nonumber\\
&\ \ \ \ \ \ \ \ \ \ \ \ \ \ \ \ \ \ \ \ \ \ \ \  \times \left[H_\delta(v-w)(x,t_1,t_2)-H_\delta(v-w)(y,t_1,t_2)\right]\psi\,d\mu dt_1dt_2=0,
\end{align}
where we employed the property $H_\delta(z)=-\widehat{H}_\delta(-z)$.
Before sending $\delta\rightarrow 0$, we treat the local and nonlocal terms that do not possess a regular limit.
For the second term in \eqref{3.6}, recalling the algebraic inequality
\begin{align}
\label{3.7}
\left(|\eta|^{p-2}\eta-|\zeta|^{p-2}\zeta\right)(\eta-\zeta)\geq 0
\end{align}
holds for all $\eta,\zeta\in\mathbb{R}$, thus we have
\begin{align*}
&(|\nabla v|^{p-2}\nabla v-|\nabla w|^{p-2}\nabla w)\cdot\nabla\left[H_\delta(v-w)\right]\\
=&H_\delta^\prime(v-w)(\nabla v-\nabla w)(|\nabla v|^{p-2}\nabla v-|\nabla w|^{p-2}\nabla w)\geq 0.
\end{align*}
For the nonlocal term, it infers from the Mean Value Theorem that there exists  a function $\xi$ which lies between $(v-w)(x,t_1,t_2)$ and $(v-w)(y,t_1,t_2)$ such that
\begin{align*}
&\left(V(x,y,t_1,t_2)-W(x,y,t_1,t_2)\right)\left[H_\delta(v-w)(x,t_1,t_2)-H_\delta(v-w)(y,t_1,t_2)\right]\\
=&[|v(x,t_1,t_2)-v(y,t_1,t_2)|^{p-2}(v(x,t_1,t_2)-v(y,t_1,t_2))\\
&-|w(x,t_1,t_2)-w(y,t_1,t_2)|^{p-2}(w(x,t_1,t_2)-w(y,t_1,t_2))]\\
&\times H^\prime_\delta(\xi)[(v(x,t_1,t_2)-v(y,t_1,t_2))-(w(x,t_1,t_2)-w(y,t_1,t_2))]\geq 0,
\end{align*}
where we used \eqref{3.7} again. Dropping the nonnegative terms in \eqref{3.6} and letting $\delta\rightarrow 0$, we derive
\begin{align}
\label{3.8}
\limsup _{\delta \rightarrow 0} \iiint_{\widetilde{Q}}-\left(h_\delta(v, w) \partial_{t_1} \psi+\widehat{h}_\delta(w, v) \partial_{t_2} \psi\right)\,dxdt_1dt_2 \leq 0.
\end{align}
Once we obtain the integral inequality \eqref{3.8}, Proposition \ref{pro-3-4} follows from the argument in \cite[Proposition 4.16]{BDG23}.
\end{proof}

\section{Energy estimate and local boundedness}
\label{sec4}
\subsection{Energy estimate} In this section, we first present the Caccioppoli-type inequality which can proceed similarly as in \cite{BGK23}.
\begin{lemma}
	\label{lem-4-1}
	Let $p>1$ and $q>0$. Let $u$ be a nonnegative weak subsolution to \eqref{1.1} and let $Q_{\rho,s}=K_\rho(x_0)\times(t_0-s,t_0]\subset\subset E_T$. For any nonnegative, piecewise smooth cutoff function $\psi$ vanishing on $\partial K_\rho(x_0)\times(t_0-s,t_0)$, there exists a constant $\gamma(N,p,s,q,\Lambda)>0$ such that
	\begin{align}
	\label{4.1}
	&\operatorname*{ess \sup}_{t_0-s<t<t_0} \int_{K_\rho(x_0)\times\{t\}}  \bm{h}_+(u,k)\psi^p(x, t)\,dx+\iint_{Q_{\rho,s}}|\nabla(u-k)_+|^p\psi^p(x,t)\,dxdt\nonumber\\
	&+ \int_{t_0-s}^{t_0} \int_{K_\rho(x_0)} \int_{K_\rho(x_0)}\left|(u-k)_+(x,t)\psi(x, t)-(u-k)_+(y,t)\psi(y,t)\right|^p\, d\mu dt \nonumber\\
	\leq& \gamma \iint_{Q_{\rho,s}}\bm{h}_+(u,k)|\partial_t\psi|+(u-k)_+^p|\nabla\psi|^p\,dxdt\nonumber\\
	&+\gamma\int_{t_0-s}^{t_0} \int_{K_\rho(x_0)} \int_{K_\rho(x_0)} \max \{(u-k)_+(x, t),(u-k)_+(y,t)\}^p|\psi(x,t)-\psi(y,t)|^p\,d\mu dt \nonumber\\
	&+\gamma \mathop{\mathrm{ess}\,\sup}_{\stackrel{t_0-s<t<t_0}{x\in \mathrm{supp}\,\psi(\cdot,t)}}\int_{\mathbb{R}^N \backslash K_\rho(x_0)} \frac{(u-k)_+^{p-1}(y,t)}{|x-y|^{N+sp}}\,dy\iint_{Q_{\rho,s}}(u-k)_+\psi^p(x,t)\,dxdt\nonumber\\
	&+\int_{K_\rho(x_0)\times\{t_0-s\}}  \bm{h}_+(u,k)\psi^p(x, t)\,dx,
	\end{align}
	where $k\in \mathbb{R}$ and the function $\bm{h}_+$ is defined as in \eqref{2.1}.
\end{lemma}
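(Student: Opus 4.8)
The plan is to derive the Caccioppoli-type inequality \eqref{4.1} by testing the weak formulation \eqref{1.7} with a suitable truncation of the solution against the cutoff, handling the time term via mollification. Concretely, I would take the test function $\varphi = (u-k)_+\psi^p$; since $u$ only has a time derivative in the mollified sense, I would actually work with $\varphi_h$ built from $[\![u]\!]_h$ (or, more precisely, with $\varphi = [\![(u-k)_+]\!]_h\psi^p$ in a Steklov-type averaging), apply \eqref{1.7}, and then pass to the limit $h\to0$ using Lemma \ref{lem-2-8}. By Proposition \ref{pro-3-2} (parabolicity) we may also replace $u$ by the truncation $u_k = k+(u-k)_+$ from the outset if that simplifies bookkeeping. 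The role of the auxiliary function $\bm h_+$ is exactly to capture the time term: integrating by parts in $t$ turns $\partial_t(|u|^{q-1}u)\,(u-k)_+\psi^p$ into $\partial_t$ of $\bm h_+(u,k)\psi^p$ plus a term with $\bm h_+(u,k)\,\partial_t\psi^p$, which after taking $\mathrm{ess\,sup}$ in $t$ and discarding the (nonnegative, because $\psi(\cdot,t_0-s)$-weighted initial term is moved to the right) boundary contributions produces the first term on the left and the first term on the right of \eqref{4.1}. This is the standard computation for doubly nonlinear equations; the mollification limit is routine given Lemma \ref{lem-2-8}(i),(iv).

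For the local elliptic term, $|\nabla u|^{p-2}\nabla u\cdot\nabla((u-k)_+\psi^p)$ expands as $|\nabla(u-k)_+|^p\psi^p + p(u-k)_+\psi^{p-1}|\nabla(u-k)_+|^{p-2}\nabla(u-k)_+\cdot\nabla\psi$; I would keep the first piece on the left and absorb the cross term by Young's inequality, at the cost of the $(u-k)_+^p|\nabla\psi|^p$ term on the right. This is completely standard.

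The genuinely new work is the nonlocal term $\mathcal E(u,\varphi,t)$ with $\varphi=(u-k)_+\psi^p$. I would split $\mathbb R^N\times\mathbb R^N$ into $K_\rho\times K_\rho$ and the complementary ``tail'' region (where one variable lies outside $K_\rho$). On the diagonal block, using the kernel bounds and the pointwise algebraic inequality relating $U(x,y,t)\big((u-k)_+\psi^p(x,t)-(u-k)_+\psi^p(y,t)\big)$ to $|(u-k)_+(x,t)\psi(x,t)-(u-k)_+(y,t)\psi(y,t)|^p$ minus a term controlled by $\max\{(u-k)_+(x,t),(u-k)_+(y,t)\}^p|\psi(x,t)-\psi(y,t)|^p$ (this is the by-now classical Caccioppoli estimate for the fractional $p$-Laplacian, cf. \cite{BGK23, DZZ21}), I get the third term on the left and the second term on the right. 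For the tail block, where $y\notin K_\rho$ and $x\in\mathrm{supp}\,\psi(\cdot,t)$, I bound $|U(x,y,t)|\le (u(x,t)+u^-(y,t)+(u-k)_+(y,t))^{p-1}\lesssim$ (a bounded factor)$\,+(u-k)_+^{p-1}(y,t)$ times the kernel, multiply by $(u-k)_+(x,t)\psi^p(x,t)$, and pull out $\mathrm{ess\,sup}_{x,t}\int_{\mathbb R^N\setminus K_\rho}(u-k)_+^{p-1}(y,t)|x-y|^{-N-sp}\,dy$, leaving $\iint_{Q_{\rho,s}}(u-k)_+\psi^p$; this yields the fourth term on the right. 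The main obstacle is making the tail estimate clean: one must check that the contributions from $u(y,t)\le k$ (where $(u-k)_+(y,t)=0$) only produce a harmless sign-favorable term, so that only $(u-k)_+^{p-1}(y,t)$ survives in the tail integral — this uses nonnegativity of $u$ and monotonicity of $t\mapsto|t|^{p-2}t$, exactly as in \cite{BGK23}. Once all three groups of terms are assembled and the cross terms absorbed, \eqref{4.1} follows; I would remark that every step mirrors \cite{BGK23} except for the doubly nonlinear time term, which is handled by $\bm h_+$ and Lemma \ref{lem-2-2}.
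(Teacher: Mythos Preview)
Your proposal is correct and matches the paper's approach: the paper does not give a detailed proof of Lemma \ref{lem-4-1} but simply states that it ``can proceed similarly as in \cite{BGK23}'', which is exactly the scheme you outline (test with $(u-k)_+\psi^p$, mollify in time to produce $\bm h_+$, use Young's inequality on the local gradient term, and split the nonlocal form into a diagonal block handled by the fractional Caccioppoli algebra and a tail block yielding the $\mathrm{Tail}$-type term). Your remark that the only deviation from \cite{BGK23} is the doubly nonlinear time term, absorbed via $\bm h_+$ and Lemma \ref{lem-2-2}, is precisely the point.
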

\subsection{Local boundedness}
In the following, we devote to showing the quantitative $L^\infty$ bound of weak solutions to \eqref{1.1} presented in Theorem \ref{thm-1-2}. To start with, we deduce an iterative inequality.
\subsubsection{An iterative inequality for $r\geq q+1$}
Let $(x_0,t_0)=(0,0)$. For $\sigma\in(0,1)$, define decreasing sequences
\begin{align*}
\rho_0=\rho,\ \ \rho_j=\sigma\rho+2^{-j}(1-\sigma)\rho,\ \ \widetilde{\rho}_j=\frac{\rho_j+\rho_{j+1}}{2}, \ \ \widehat{\rho}_j=\frac{3\rho_j+\rho_{j+1}}{4}, \ \ j=0,1,2\ldots,
\end{align*}
\begin{align*}
s_0=s,\ \ s_j=\sigma s+2^{-j}(1-\sigma)s,\ \ \widetilde{s}_j=\frac{s_j+s_{j+1}}{2}, \ \ \widehat{s}_j=\frac{3s_j+s_{j+1}}{4}, \ \ j=0,1,2\ldots.
\end{align*}
Set the domains
\begin{align*}
K_j=K_{\rho_j}, \ \ \widetilde{K}_j=K_{\widetilde{\rho}_j}, \ \ \widehat{K}_j=K_{\widehat{\rho}_j}, \ \ j=0,1,2\ldots,
\end{align*}
\begin{align*}
Q_j=K_j \times(-s_j, 0], \ \  \widetilde{Q}_j=\widetilde{K}_j \times(-\widetilde{s}_j,0], \ \ \widehat{Q}_j=\widehat{K}_j \times (-\widehat{s}_j,0], \ \ j=0,1,2\ldots.
\end{align*}
Take increasing sequences 
\begin{align*}
k_j=k-\frac{k}{2^j},\ \ \widetilde{k}_j=\frac{k_j+k_{j+1}}{2}, \ \ j=0,1,2\ldots
\end{align*}
with level $k>0$ will be specified later. Consider the cutoff function $\zeta\in C^\infty_0(Q_j)$ vanishing outside $\widehat{Q}_j$, satisfying
\begin{align*}
0\leq\zeta\leq 1,\ \ |\nabla \zeta| \leq \frac{2^{j+2}}{(1-\sigma) \rho}, \ \  |\partial_t \zeta| \leq \frac{2^{j+2}}{(1-\sigma) s}, \ \ \zeta\equiv 1 \quad \text{in } \widetilde{Q}_j.
\end{align*}
Applying the energy estimate \eqref{4.1} in this framework yields that
\begin{align}
\label{4.2}
&\operatorname*{ess \sup}_{-\widetilde{s}_j<t<0} \int_{\widetilde{K}_j\times\{t\}}  \bm{h}_+(u,\widetilde{k}_j)\,dx+\iint_{\widetilde{Q}_j}|\nabla(u-\widetilde{k}_j)_+|^p\,dxdt\nonumber\\
\leq& \frac{\gamma 2^j}{(1-\sigma)s} \iint_{Q_j}\bm{h}_+(u,\widetilde{k}_j)\,dxdt+\frac{\gamma 2^{pj}}{(1-\sigma)^p\rho^p}\iint_{Q_j}(u-\widetilde{k}_j)_+^p\,dxdt\nonumber\\
&+\frac{\gamma}{(1-\sigma)^p\rho^p}\int_{-s_j}^{0} \int_{K_j} \int_{K_j}\frac{\max \{(u-\widetilde{k}_j)_+^p(x,t),(u-\widetilde{k}_j)_+^p(y,t)\}}{|x-y|^{N-(1-s)p}} \,dxdydt\nonumber\\
&+\gamma \mathop{\mathrm{ess}\,\sup}_{\stackrel{-s_j<t<0}{x\in \mathrm{supp}\,\zeta(\cdot,t)}}\int_{\mathbb{R}^N \backslash K_j} \frac{(u(y,t)-\widetilde{k}_j)_+^{p-1}}{|x-y|^{N+sp}}\,dy\iint_{Q_j}(u(x,t)-\widetilde{k}_j)_+\zeta^p(x,t)\,dxdt,
\end{align}
where we drop the nonnegative term on the left-hand side, and the constant $\gamma$ depends only on $N,p,s,q,\Lambda$. Next, we estimate the first term on the right-hand side of \eqref{4.2}. On the set $[u>k_{j+1}]$, we have
\begin{align*}
1 \leq \frac{u+\widetilde{k}_j}{u-\widetilde{k}_j} \leq \frac{2 u}{u-\widetilde{k}_j} \leq \frac{2 k_{j+1}}{k_{j+1}-\widetilde{k}_j} \leq 2^{j+3},
\end{align*}
which in conjunction with Lemma \ref{lem-2-2} gives that
\begin{align}
\label{4.3}
\bm{h}_+(u,\widetilde{k}_j) & \geq \frac{1}{\gamma}(u+\widetilde{k}_j)^{q-1}(u-\widetilde{k}_j)_+^2\nonumber \\
 &\geq \frac{1}{\gamma} 2^{-(j+3)(1-q)_+}(u-k_{j+1})_+^{q+1},
\end{align}
where $\gamma>0$ depends only on $q$. Besides, on the set $[u>\widetilde{k}_j]$, 
\begin{align*}
1 \leq \frac{u+\widetilde{k}_j}{u-k_j} \leq \frac{2 u}{u-k_j} \leq \frac{2 \widetilde{k}_j}{\widetilde{k}_j-k_j} \leq 2^{j+3}.
\end{align*}
Therefore, we have by Lemma \ref{lem-2-2} that
\begin{align}
\label{4.4}
\bm{h}_+(u, \widetilde{k}_j) &\leq \gamma(u+\widetilde{k}_j)^{q-1}(u-\widetilde{k}_j)_+^2\nonumber \\
 &\leq \gamma 2^{(j+3)(q-1)_+}(u-k_j)_+^{q+1} \chi_{[u>\widetilde{k}_j]}
\end{align}
with $\gamma$ only depending on $q$. For the last term of \eqref{4.2}, it is easy to find 
\begin{align*}
\frac{|y|}{|y-x|}\leq 1+\frac{|x|}{|y-x|}\leq 1+\frac{\widehat{\rho}_j}{\rho_j-\widehat{\rho}_j}\leq\frac{ 2^{j+4}}{1-\sigma}
\end{align*}
for every $|x|\leq\widehat{\rho}_j \text{ and } |y|\geq \rho_j$. Thus we obtain
\begin{align}
\label{4.5}
&\mathop{\mathrm{ess}\,\sup}_{\stackrel{-s_j<t<0}{x\in \mathrm{supp}\,\zeta(\cdot,t)}}\int_{\mathbb{R}^N \backslash K_j} \frac{(u(y,t)-\widetilde{k}_j)_+^{p-1}}{|x-y|^{N+sp}}\,dy\iint_{Q_j}(u(x,t)-\widetilde{k}_j)_+\zeta^p(x,t)\,dxdt\nonumber\\
\leq& \frac{\gamma 2^{(N+sp)j}}{(1-\sigma)^{N+sp}}\operatorname*{ess \sup}_{-s_j<t<0}\int_{\mathbb{R}^N \backslash K_{\sigma\rho}}\frac{(u(y,t)-k_0)_+^{p-1}}{|y|^{N+sp}}\,dy\iint_{Q_j}(u(x,t)-\widetilde{k}_j)_+\,dxdt\nonumber\\
\leq & \frac{\gamma 2^{(N+sp)j}}{(1-\sigma)^{N+sp}(\sigma\rho)^p}[\mathrm{Tail}_\infty(u;0,\sigma\rho;-s_j,0)]^{p-1}\iint_{Q_j}(u(x,t)-\widetilde{k}_j)_+\,dxdt.
\end{align}
Substituting \eqref{4.3}--\eqref{4.5} into \eqref{4.2}, we arrive at
\begin{align}
\label{4.6}
& 2^{-j(1-q)_+} \operatorname*{ess \sup}_{-\widetilde{s}_j<t<0} \int_{\widetilde{K}_j \times\{t\}}(u-k_{j+1})_+^{q+1}\,dx+\iint_{\widetilde{Q}_j}|\nabla(u-\widetilde{k}_j)_+|^p\,dxdt \nonumber\\
\leq & \frac{\gamma 2^{j[1+(q-1)_+]}}{(1-\sigma) s} \iint_{\widetilde{A}_j}(u-k_j)_+^{q+1}\,dxdt +\frac{\gamma 2^{pj}}{(1-\sigma)^p \rho^p} \iint_{Q_j}(u-\widetilde{k}_j)_+^p\,dxdt\nonumber\\
&+ \frac{\gamma 2^{(N+sp)j}}{(1-\sigma)^{N+sp}(\sigma\rho)^p}[\mathrm{Tail}_\infty(u;0,\sigma\rho;-s_j,0)]^{p-1}\iint_{Q_j}(u-\widetilde{k}_j)_+\,dxdt,
\end{align}
where $\widetilde{A}_j:=[u>\widetilde{k}_j]\cap Q_j$, and $\gamma$ depends only on $N,p,s,q,\Lambda$. 
Note that 
\begin{align}
\label{4.7}
|\widetilde{A}_j|=|[u>\widetilde{k}_j]\cap Q_j|\leq \frac{2^{(j+2)r}}{k^r} \iint_{Q_j}(u-k_j)_+^r\,dxdt,
\end{align}
along with the assumption $r\geq q+1\geq p$ allows us to apply H\"{o}lder's inequality to get
\begin{align}
\label{4.8}
\iint_{\widetilde{A}_j}(u-k_j)_+^{q+1}\,dxdt& \leq\left(\iint_{Q_j}(u-k_j)_+^r\,dxdt \right)^{\frac{q+1}{r}}|\widetilde{A}_j|^{1-\frac{q+1}{r}} \nonumber\\
& \leq  \frac{\gamma 2^{(r-q-1)j}}{k^{r-q-1}} \iint_{Q_j}(u-k_j)_+^r\,dxdt.
\end{align}
Similarly, we can derive
\begin{align}
\label{4.9}
\iint_{Q_j}(u-\widetilde{k}_j)_+^p\,dxdt 
& \leq \frac{\gamma 2^{(r-p)j}}{k^{r-p}} \iint_{Q_j}(u-k_j)_+^r\,dxdt,
\end{align}
and
\begin{align}
\label{4.10}
\iint_{Q_j}(u-\widetilde{k}_j)_+\,dxdt 
& \leq  \frac{\gamma 2^{(r-1)j}}{k^{r-1}} \iint_{Q_j}(u-k_j)_+^r\,dxdt.
\end{align}
Merging estimates \eqref{4.6}, \eqref{4.8}--\eqref{4.10} results to
\begin{align*}
& 2^{-j(1-q)_+} \operatorname*{ess \sup}_{-\widetilde{s}_j<t<0} \int_{\widetilde{K}_j \times\{t\}}(u-k_{j+1})_+^{q+1}\,dx+\iint_{\widetilde{Q}_j}|\nabla(u-\widetilde{k}_j)_+|^p\,dxdt \nonumber\\
\leq& \frac{\gamma 2^{j(N+p+r)}}{(1-\sigma)^{N+p} \sigma^p s} \frac{1}{k^{r-q-1}}\left(1+\frac{s}{\rho^p k^{q+1-p}}  +\frac{s}{\rho^p k^q}[\text{Tail}_{\infty}(u;0,\sigma\rho;-s,0)]^{p-1}\right)\\
&\times\iint_{Q_j}(u-k_j)_+^r\,dxdt,
\end{align*}
where we utilized the fact $\sigma\in(0,1)$.
Now, we choose $k$ to satisfy
\begin{align}
\label{4.11}
k \geq\left(\frac{s}{\rho^p}\right)^{\frac{1}{q+1-p}}+\left(\frac{s}{\rho^p}[\text{Tail}_\infty(u;0,\sigma\rho;-s,0)]^{p-1}\right)^{\frac{1}{q}},
\end{align}
so that
\begin{align}
\label{4.12}
& 2^{-j(1-q)_+} \operatorname*{ess \sup}_{-\widetilde{s}_j<t<0} \int_{\widetilde{K}_j \times\{t\}}(u-k_{j+1})_+^{q+1}\,dx+\iint_{\widetilde{Q}_j}|\nabla(u-\widetilde{k}_j)_+|^p\,dxdt \nonumber\\
\leq& \frac{\gamma 2^{j(N+p+r)}}{(1-\sigma)^{N+p}\sigma^p s} \frac{1}{k^{r-q-1}}\iint_{Q_j}(u-k_j)_+^r\,dxdt.
\end{align}
Since $m=p\frac{N+q+1}{N}$, we deduce from the Sobolev embedding Lemma \ref{lem-2-7} that
\begin{align*}
 \iint_{Q_{j+1}}&(u-k_{j+1})_+^m\,dxdt\leq \iint_{\widetilde{Q}_j}(u-k_{j+1})_+^m\,dxdt \nonumber\\
 \leq& \gamma \iint_{\widetilde{Q}_j}|\nabla(u-k_{j+1})_+|^p\,dxdt \left(\operatorname*{ess \sup}_{-\widetilde{s}_j<t<0} \int_{\widetilde{K}_j \times\{t\}}(u-k_{j+1})_+ ^{q+1}\,dx\right)^{\frac{p}{N}}\nonumber\\
\leq& \frac{ \gamma 2^{j\left[N+p+r+(1-q)_+ \right]\frac{N+p}{N}}}{(1-\sigma)^{\frac{(N+p)^2}{N}}\sigma^{\frac{p(N+p)}{N}} k^{(r-q-1) \frac{N+p}{N}}s^{\frac{N+p}{N}}}\left(\iint_{Q_j}(u-k_j)_+^r\,dxdt\right)^{\frac{N+p}{N}},
\end{align*}
where we used \eqref{4.12} to get the last line. The above estimate implies 
\begin{align}
\label{4.13}
& \mint\!\!\mint_{Q_{j+1}}\left(u-k_{j+1}\right)_+^m\,dxdt\nonumber\\
\leq&  \frac{ \gamma 2^{j\left[N+p+r+(1-q)_+ \right]\frac{N+p}{N}}}{(1-\sigma)^{\frac{(N+p)^2}{N}}\sigma^{\frac{p(N+p)}{N}} k^{(r-q-1) \frac{N+p}{N}}} \frac{\rho^p}{s}\left(\mint\!\!\mint_{Q_j}(u-k_j)_+^r\,dxdt\right)^{\frac{N+p}{N}},
\end{align}
where $\gamma$ depends only on $N,p,s,q,\Lambda$.

Note that inequality \eqref{4.13} holds for exponent $r\geq q+1$. Hereafter, we distinguish three cases: $q+1\leq r\leq m$; $r\leq m$ and $r<q+1$; $r>m$ to discuss the local boundedness of weak subsolutions to \eqref{1.1}. 
\subsubsection{The case $q+1\leq r\leq m$}
Denote
\begin{align*}
Y_j=\mint\!\!\mint_{Q_j}(u-k_j)_+^r\,dxdt.
\end{align*}
By H\"{o}lder's inequality and \eqref{4.7}, there exists a constant $\gamma$ depending only on $N,p,s,q,\Lambda$ such that 
\begin{align*}
Y_{j+1}&\leq \frac{1}{|Q_{j+1}|}\left(\iint_{Q_{j+1}}(u-k_{j+1})_+^m\,dxdt\right)^{\frac{r}{m}}|\widetilde{A}_j|^{1-\frac{r}{m}}\\
&\leq \gamma\left(\mint\!\!\mint_{Q_{j+1}}(u-k_{j+1})_+^m\,dxdt\right)^{\frac{r}{m}}\left(\frac{2^{(j+2)r}}{k^r} \mint\!\!\mint_{Q_j}(u-k_j)_+^r\,dxdt\right)^{1-\frac{r}{m}}.
\end{align*}
Based on the definition $\lambda_r=N(p-q-1)+rp$, it reads from \eqref{4.13} that
\begin{align*}
Y_{j+1}\leq \frac{\gamma b^j}{(1-\sigma)^{\frac{ r(N+p)^2}{N m}}\sigma^{\frac{rp(N+p)}{Nm}}}\left(\frac{\rho^p}{s}\right)^{\frac{r}{m}} k^{-\frac{r \lambda_r}{N m}} Y_j^{1+\frac{r p}{N m}},
\end{align*}
where $b:=2^{\frac{(N+p)r}{Nm}[N+p+r+(1-q)_{+}]+r}$ and $\gamma$ depends only on $N,p,s,q,\Lambda$. Moreover, Lemma \ref{lem-2-4} guarantees $Y_j\rightarrow 0$ as $j\rightarrow \infty$ if
\begin{align*}
Y_0=\mint\!\!\mint_{Q_0} u^r\,dxdt\leq \gamma^{-1}[\sigma(1-\sigma)]^{\frac{(N+p)^2}{p}}\left(\frac{s}{\rho^p}\right)^{\frac{N}{p}} k^{\frac{\lambda_r}{p}},
\end{align*}
which asks for enforcing
\begin{align}
\label{4.14}
k \geq \frac{\gamma}{[(1-\sigma)\sigma]^{\frac{(p+N)^2}{\lambda_r}}}\left(\frac{\rho^p}{s}\right)^{\frac{N}{\lambda_r}}\left(\mint\!\!\mint_{Q_0} u^r\,dxdt\right)^{\frac{p}{\lambda_r}} .
\end{align}

Combining the choices of $k$ in \eqref{4.11} and \eqref{4.14}, it follows form $Y_j\rightarrow 0$ that
\begin{align}
\label{4.15}
\operatorname*{ess \sup}_{Q_{\sigma \rho, \sigma s}}\ u \leq& \frac{\gamma}{[(1-\sigma)\sigma]^{\frac{(p+N)^2}{\lambda_r}}}\left(\frac{\rho^p}{s}\right)^{\frac{N}{\lambda_r}}\left(\mint\!\!\mint_{Q_{\rho,s}} u^r\,dxdt\right)^{\frac{p}{\lambda_r}}+\left(\frac{s}{\rho^p}\right)^{\frac{1}{q+1-p}}\nonumber\\
&+\left(\frac{s}{\rho^p}[\text{Tail}_\infty(u;0,\sigma\rho;-s,0)]^{p-1}\right)^{\frac{1}{q}},
\end{align}
where $\gamma$ depends only on $N,p,s,q,\Lambda$. With taking $\sigma=\frac{1}{2}$, we get the desired boundedness result for $q+1\leq r\leq m$.

\subsubsection{The case $r\leq m \text{ and } r<q+1$}

Since $r<q+1$, it is not hard to find $0<\lambda_r<\lambda_{q+1}$, from which we can deduce that $q+1<m$. Hence, we can exploit estimate \eqref{4.15} in the first case directly by replacing $q+1$ with $r$. For any $\sigma\in(0,1)$, we have
\begin{align}
\label{4.16}
\operatorname*{ess \sup}_{Q_{\sigma \rho, \sigma s}}\ u \leq& \frac{\gamma}{[(1-\sigma)\sigma]^{\frac{(p+N)^2}{\lambda_{q+1}}}}\left(\frac{\rho^p}{s}\right)^{\frac{N}{\lambda_{q+1}}}\left(\mint\!\!\mint_{Q_{\rho,s}} u^{q+1}\,dxdt\right)^{\frac{p}{\lambda_{q+1}}}+\left(\frac{s}{\rho^p}\right)^{\frac{1}{q+1-p}}\nonumber\\
&+\left(\frac{s}{\rho^p}[\text{Tail}_\infty(u;0,\sigma\rho;-s,0)]^{p-1}\right)^{\frac{1}{q}}.
\end{align}
Set
\begin{align*}
M_\sigma=\operatorname*{ess \sup}_{Q_{\sigma \rho, \sigma s}} \  u \quad \text { and } \quad M_1=\operatorname*{ess \sup}_{Q_{\rho,s}}\  u.
\end{align*}
Recalling the definition of $\lambda_{q+1}$, $\lambda_r$, we deduce from \eqref{4.16} that
\begin{align*}
M_\sigma \leq& M_1^{1-\frac{\lambda_r}{\lambda_{q+1}}} \frac{\gamma}{[(1-\sigma)\sigma]^{\frac{(p+N)^2}{\lambda_{q+1}}}}\left(\frac{\rho^p}{s}\right)^{\frac{N}{\lambda_{q+1}}}\left(\mint\!\!\mint_{Q_{\rho, s}} u^r\,dxdt\right)^{\frac{p}{\lambda_{q+1}}}+\left(\frac{s}{\rho^p}\right)^{\frac{1}{q+1-p}}\nonumber\\
&+\left(\frac{s}{\rho^p}[\text{Tail}_\infty(u;0,\sigma\rho;-s,0)]^{p-1}\right)^{\frac{1}{q}}.
\end{align*}
By Young's inequality, we have
\begin{align*}
M_\sigma \leq& \frac{1}{2} M_1+\frac{\gamma}{[(1-\sigma)\sigma]^{\frac{(p+N)^2}{\lambda_r}}}\left(\frac{\rho^p}{s}\right)^{\frac{N}{\lambda_r}}\left(\mint\!\!\mint_{Q_{\rho,s}} u^r\,dxdt\right)^{\frac{p}{\lambda_r}}+\left(\frac{s}{\rho^p}\right)^{\frac{1}{q+1-p}}\\
&+\left(\frac{s}{\rho^p}[\text{Tail}_\infty(u;0,\sigma\rho;-s,0)]^{p-1}\right)^{\frac{1}{q}}.
\end{align*}
Consider above estimate on the cylinders $Q_{\sigma_2 \rho, \sigma_2 s}$ and $Q_{\sigma_1 \rho, \sigma_1 s}$ with $\frac{1}{2}\leq \sigma_1\leq\sigma_2\leq 1$, it shows that
\begin{align*}
M_{\sigma_1} \leq& \frac{1}{2} M_{\sigma_2}+\frac{\gamma}{(\sigma_2-\sigma_1)^{\frac{(p+N)^2}{\lambda r}}}\left(\frac{\rho^p}{s}\right)^{\frac{N}{\lambda_r}}\left(\mint\!\!\mint_{Q_{\sigma_1 \rho, \sigma_1 s}} u^r\,dxdt \right)^{\frac{p}{\lambda_r}}+\left(\frac{s}{\rho^p}\right)^{\frac{1}{q+1-p}}\\
&+\left(\frac{s}{\rho^p}[\text{Tail}_\infty(u;0,\rho/2;-s,0)]^{p-1}\right)^{\frac{1}{q}}.
\end{align*}
This enables us to use iteration Lemma \ref{lem-2-5} to arrive at the claim in this case.	
\subsubsection{The case $r>m$}
According to the assumption $\lambda_r>0$, we can see $r>q+1$ from $r>m$. If not, there will hold $0<\lambda_r\leq\lambda_{q+1}=N(m-q-1)$, which implies that $q+1<m$, and this would yield a contradiction. In this case, we assume a prior that $u\in L_{\rm{loc}}^\infty(E_T)$ due to the embedding Lemma \ref{lem-2-7} generally not true. By iterative inequality \eqref{4.13} and the definition of $Y_j$, we have 
\begin{align*}
Y_{j+1}  =&\mint\!\!\mint_{Q_{j+1}}(u-k_{j+1})_+^r\,dxdt  \\
\leq&\|u\|_{\infty, Q_0}^{r-m} \mint\!\!\mint_{Q_{j+1}}(u-k_{j+1})_+^m\,dxdt \\
 \leq& \gamma\|u\|_{\infty, Q_0}^{r-m} \frac{\rho^p}{s} \frac{b^j}{[(1-\sigma)\sigma]^{\frac{(N+p)^2}{N}} k^{(r-q-1) \frac{N+p}{N}}}Y_j^{1+\frac{p}{N}},
\end{align*}
where $b=2^{\frac{(N+p)^2}{N}+r \frac{N+p}{N}+\frac{N+p}{N}(1-q)_{+}}$. Utilizing Lemma \ref{lem-2-4}, it holds that $Y_j\rightarrow 0$ as $j\rightarrow\infty$ if 
\begin{align*}
Y_0=\mint\!\!\mint_{Q_0} u^r\,dxdt \leq \gamma^{-1}\|u\|_{\infty, Q_0}^{-(r-m) \frac{N}{p}}[(1-\sigma)\sigma]^{\frac{(N+p)^2}{p}}\left(\frac{s}{\rho^p}\right)^{\frac{N}{p}} k^{\frac{(r-q-1)(N+p)}{p}},
\end{align*}
which requires us to choose $k$ fulling
\begin{align}
\label{4.17}
k \geq \frac{\gamma\|u\|_{\infty, Q_0}^{\frac{N(r-m)}{(N+p)(r-q-1)}}}{[(1-\sigma)\sigma]^{\frac{N+p}{r-q-1}}}\left(\frac{\rho^p}{s}\right)^{\frac{N}{(N+p)(r-q-1)}}\left(\mint\!\!\mint_{Q_0} u^r\,dxdt \right)^{\frac{p}{(N+p)(r-q-1)}}.
\end{align}

Taking the choices of $k$ in \eqref{4.11} and \eqref{4.17} into account, we deduce from $Y_j\rightarrow 0$ that
\begin{align*}
\operatorname*{ess \sup}_{Q_{\sigma \rho, \sigma s}}\ u \leq&  \frac{\gamma\|u\|_{\infty, Q_0}^{\frac{N(r-m)}{(N+p)(r-q-1)}}}{[(1-\sigma)\sigma]^{\frac{N+p}{r-q-1}}}\left(\frac{\rho^p}{s}\right)^{\frac{N}{(N+p)(r-q-1)}}\left(\mint\!\!\mint_{Q_{\rho,s}} u^r\,dxdt \right)^{\frac{p}{(N+p)(r-q-1)}} \\
&+\left(\frac{s}{\rho^p}\right)^{\frac{1}{q+1-p}}+\left(\frac{s}{\rho^p}[\text{Tail}_\infty(u;0,\sigma\rho;-s,0)]^{p-1}\right)^{\frac{1}{q}},
\end{align*}
where $\gamma$ depends only on $N,p,s,q,\Lambda$. 
We complete the proof by using the analogous argument as the second case $ r\leq m,r<q+1$.

\section{Integral-type Harnack inequality}
\label{sec5}
In this section, we will derive Theorem \ref{thm-1-5} which is a direct result from Theorem \ref{thm-1-2} and the following integral-type Harnack inequality

\begin{proposition}
\label{pro-5-1}
Let $0<p-1<q<p^2-1$, $\rho\in(0,1]$ and $K_{2\rho}(\bar{y})\times [s,\tau]\subset\subset E_T$. Assume that $u$ is a nonnegative weak solution to \eqref{1.1}. Then there exists a constant $\gamma>0$ depending only on $N,p,s,q,\Lambda$ such that
\begin{align*}
\sup _{t \in[s,\tau]} \int_{K_{\rho}(\bar{y})\times\{t\}} u^q\,dx \leq& \gamma \inf_{t \in[s,\tau]} \int_{K_{2\rho}(\bar{y})\times\{t\}} u^q\,dx +\gamma\left(\frac{\tau-s}{\rho^\lambda}\right)^{\frac{q}{q+1-p}}\\
&+\gamma\frac{\tau-s}{\rho^{p-N}}\left[\mathrm{Tail}_\infty(u;\bar{y},\rho;s,\tau)\right]^{p-1},
\end{align*}
where 
\begin{align*}
\lambda:=\frac{\lambda_q}{q} =\frac{N}{q}(p-q-1)+p.
\end{align*}
\end{proposition}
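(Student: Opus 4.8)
\emph{Overall strategy.} The plan is to compare the ``mass'' $\int_{K_r(\bar y)}u^q\,dx$ at two arbitrary time levels $t_1,t_2\in[s,\tau]$ and to produce such a comparison by testing the weak formulation \eqref{1.7} (with equality, since $u$ is a solution) with $\varphi=\zeta(x)^p\,(u(x,t)+\delta)^{-\varepsilon}$, where $\delta>0$, $\varepsilon\in(0,p-1)$ is small, $\zeta$ is a spatial cutoff with $\zeta\equiv 1$ on $K_r(\bar y)$, $\operatorname{supp}\zeta\subset K_R(\bar y)$, $|\nabla\zeta|\le\gamma/(R-r)$ for $\rho\le r<R\le 2\rho$, and the time variable is localized to $(t_1,t_2)$ by a Lipschitz function approximating $\mathbf{1}_{(t_1,t_2)}$. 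Admissibility of $\varphi$ and the treatment of the parabolic term are justified by the exponential mollification of Lemma \ref{lem-2-8} together with the $C([0,T];L^{q+1})$-continuity of $u$ from Proposition \ref{pro-3-3}. The parabolic term yields the increment of $\int_{K_R}\mathcal{G}_\delta(u)\,\zeta^p\,dx$ between $t_1$ and $t_2$, where $\mathcal{G}_\delta(w):=q\int_0^w\sigma^{q-1}(\sigma+\delta)^{-\varepsilon}\,d\sigma$; note $\mathcal{G}_\delta(w)\to\frac{q}{q-\varepsilon}w^{q-\varepsilon}$ as $\delta\to0$ (finite since $\varepsilon<p-1<q$) and $w^{q-\varepsilon}\to w^q$ as $\varepsilon\to0$.

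\emph{Local and nonlocal terms.} For the diffusion term I would write $\nabla\varphi=p\zeta^{p-1}(u+\delta)^{-\varepsilon}\nabla\zeta-\varepsilon\zeta^p(u+\delta)^{-\varepsilon-1}\nabla u$, which contributes a sign-definite energy term $\pm\varepsilon\iint\zeta^p(u+\delta)^{-\varepsilon-1}|\nabla u|^p$ plus a cross term that Young's inequality bounds by a small multiple of the energy term plus $\gamma\iint(u+\delta)^{p-1-\varepsilon}|\nabla\zeta|^p$. For the nonlocal term I would apply Lemma \ref{lem-2-3} with $a=u(x,t)+\delta$, $b=u(y,t)+\delta$, $\tau_1=\zeta(x)$, $\tau_2=\zeta(y)$ and $\alpha=p-1-\varepsilon$; this produces a nonnegative fractional-Sobolev term (kept with a favorable sign) plus an error controlled by $\gamma(\varepsilon)\iint K|\zeta(x)-\zeta(y)|^p\big((u+\delta)^\alpha(x)+(u+\delta)^\alpha(y)\big)$. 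Splitting this last integral according to $|x-y|\le R$ and $|x-y|>R$, and using $|\zeta(x)-\zeta(y)|\le\gamma\min\{1,|x-y|/(R-r)\}$, bounds it by a local contribution of order $(R-r)^{-sp}\int_{K_{2\rho}}(u+\delta)^\alpha\,dx$ and a tail contribution of order $\rho^N\int_{\mathbb{R}^N\setminus K_{2\rho}}|x-\bar y|^{-N-sp}(u+\delta)^\alpha\,dx$. As $\delta\to0$ (so $\alpha\to p-1$) these become, after integration in time, terms of the size $\frac{\tau-s}{\rho^p}\int_{K_{2\rho}}u^{p-1}\,dx$ and $\frac{\tau-s}{\rho^{p-N}}[\mathrm{Tail}_\infty(u;\bar y,\rho;s,\tau)]^{p-1}$.

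\emph{Closing the estimate.} Depending on the ordering of $t_1,t_2$, exactly one of two quantities has the ``wrong'' sign: (i) the energy term $\varepsilon\iint\zeta^p(u+\delta)^{-\varepsilon-1}|\nabla u|^p$, which I would control by an auxiliary global energy identity (testing the same $\varphi$ over all of $(s,\tau)$), giving a bound by $\int_{K_R}\mathcal{G}_\delta(u(\cdot,\tau))\zeta^p\,dx$ plus the same $|\nabla\zeta|$- and tail-type errors; (ii) an upper bound for the bilinear form $\mathcal{E}(u,\varphi,t)$, for which Lemma \ref{lem-2-3} is of no direct use, obtained instead by splitting $\tau_1^pa^{-\varepsilon}-\tau_2^pb^{-\varepsilon}=\tau_1^p(a^{-\varepsilon}-b^{-\varepsilon})+(\tau_1^p-\tau_2^p)b^{-\varepsilon}$ (the first piece being nonpositive against $|a-b|^{p-2}(a-b)$ by monotonicity of $w\mapsto w^{-\varepsilon}$, the remainder being estimated by the same $|\nabla\zeta|$- and tail-quantities). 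Having reduced everything to the three error types above, I would use $p-1<q$ to apply H\"older's inequality, $\int_{K_{2\rho}}u^{p-1}\,dx\le\big(\int_{K_{2\rho}}u^q\,dx\big)^{\frac{p-1}{q}}(\gamma\rho^N)^{\frac{q+1-p}{q}}$, followed by Young's inequality with exponents $\frac{q}{p-1},\frac{q}{q+1-p}$, to convert $\frac{\tau-s}{\rho^p}\int_{K_{2\rho}}u^{p-1}$ into $\eta\sup_t\int_{K_{2\rho}}u^q+\gamma_\eta\big(\frac{\tau-s}{\rho^\lambda}\big)^{\frac{q}{q+1-p}}$; likewise $\int\mathcal{G}_\delta(u(\cdot,\tau))\zeta^p\le\gamma\big(\sup_t\int u^q\big)^{1-\varepsilon/q}\rho^{N\varepsilon/q}$ is handled by Young. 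The resulting $\eta$-terms are absorbed; since they involve the slightly larger cube $K_R$ than the target $K_r$, the absorption is made rigorous by iterating on the radius with Lemma \ref{lem-2-5} (and by monotonicity of $R\mapsto\inf_t\int_{K_R}u^q$ one may replace it by $\inf_t\int_{K_{2\rho}}u^q$). A final passage $\delta\to0$, then $\varepsilon\to0$, using the convergences of $\mathcal{G}_\delta$ noted above, yields the claimed inequality.

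\emph{Main obstacle.} The hard part is the nonlocal term: Lemma \ref{lem-2-3} only provides a \emph{lower} bound for $\mathcal{E}(u,\varphi,t)$, which is exactly what is needed in one of the two time-orderings but not in the other, so the crux is to establish by hand, and uniformly in $\delta$, a matching one-sided \emph{upper} bound for the weighted bilinear form, in such a way that (a) the long-range part is genuinely absorbed by $[\mathrm{Tail}_\infty]^{p-1}$ once $\alpha=p-1-\varepsilon\to p-1$, and (b) no constant degenerates as $\delta\to0$ or $\varepsilon\to0$. A secondary difficulty is organizational: tracking the radius dependence through all absorptions so that the hypotheses of Lemma \ref{lem-2-5} are met, and keeping the $\bm{h}_+$- and $u^{q+1}$-type boundary contributions under control (e.g.\ via the energy estimate of Lemma \ref{lem-4-1}) throughout.
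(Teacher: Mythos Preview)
Your strategy differs substantially from the paper's, and the point you yourself flag as the ``main obstacle'' is a genuine gap rather than a technicality. The error constants coming from both Lemma~\ref{lem-2-3} and from Young's inequality on the cross term carry a factor of order $\varepsilon^{-(p-1)}$; after H\"older--Young this factor survives in the additive remainder (the $\gamma_\eta$-term), not in the absorbable $\eta$-term. Consequently, once you iterate in the radius and pass $\delta\to0$, you obtain
\[
\sup_{t}\int_{K_\rho} u^{\,q-\varepsilon}\,dx \;\le\; \gamma\inf_t\int_{K_{2\rho}} u^{\,q-\varepsilon}\,dx \;+\; C(\varepsilon)\,\Big(\frac{\tau-s}{\rho^\lambda}\Big)^{\frac{q}{q+1-p}} \;+\; \text{tail},
\]
with $C(\varepsilon)\to\infty$ as $\varepsilon\to 0$. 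So the final limit $\varepsilon\to0$ destroys the estimate, and there is no evident way to exchange the blow-up with the mass terms. Your proposed ``by-hand'' upper bound for $\mathcal{E}(u,\varphi,t)$ in the reverse time-ordering does not cure this: the problematic constant already appears in the local cross term and in the $|\zeta(x)-\zeta(y)|^p$-error, independently of which one-sided bound you use on the bilinear form.

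The paper sidesteps this entirely by decoupling the two tasks. For the time comparison it tests the equation with the \emph{unweighted} cutoff $\zeta$ (no negative power of $u$), which produces directly the increment of $\int u^q\zeta$ between two time levels, controlled by $\tfrac{1}{\rho}\iint|\nabla u|^{p-1}$, a fractional analogue $\tfrac{1}{\rho}\iint|u(x)-u(y)|^{p-1}/|x-y|^{N+sp-1}$, and a tail; see \eqref{5.12}. These gradient-type quantities are then bounded by a \emph{separate} weighted energy estimate (Lemmas~\ref{lem-5-3}--\ref{lem-5-4}) in which the negative exponent is \emph{fixed} at $\varepsilon=(q+1-p)/p\in(0,p-1)$ --- this is where the hypothesis $q<p^2-1$ enters --- and a time weight $t^{1/p}$ is inserted to make the boundary term at $t=s$ vanish. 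Because $\varepsilon$ is fixed, all constants from Lemma~\ref{lem-2-3} and Young are harmless. Finally, the absorption is not done via Lemma~\ref{lem-2-5} but by iterating \eqref{5.16} along the increasing radii $\rho_j=\sum_{n=0}^j\rho/2^n$ and summing a geometric series in $(\varepsilon b)^j$ with $\varepsilon=1/(2b)$. The two-step structure --- $\zeta$ alone for the mass balance, fixed-exponent weighted energy for the diffusion errors --- is what makes the argument close.
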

\begin{remark}
\label{rem-5-2}
If $u$ is only a nonnegative weak supersolution to \eqref{1.1}, there will hold
\begin{align*}
\sup _{t \in[s,\tau]} \int_{K_{\rho}(\bar{y})\times\{t\}} u^q\,dx \leq& \gamma\int_{K_{2\rho}(\bar{y})\times\{\tau\}} u^q\,dx +\gamma\left(\frac{\tau-s}{\rho^\lambda}\right)^{\frac{q}{q+1-p}}\\
&+\gamma\frac{\tau-s}{\rho^{p-N}}[\mathrm{Tail}_\infty(u;\bar{y},\rho;s,\tau)]^{p-1}.
\end{align*}
\end{remark}

Now, we are in a position to show a valuable estimate for proving Proposition \ref{pro-5-1}.

\begin{lemma}
\label{lem-5-3}
Let $0<p-1<q<p^2-1$, $\rho\in(0,1]$ and $K_\rho(\bar{y})\times[s,\tau]\subset\subset E_T$. Let $u$ be a nonnegative weak supersolution to \eqref{1.1}. For any $\sigma\in(0,1)$, there exists a constant $\gamma>0$ depending only on $N,p,s,q,\Lambda$ such that
\begin{align*}
& \int_{s}^{\tau}\int_{K_{\sigma \rho}(\bar{y}) }(t-s)^{\frac{1}{p}}(u+\kappa)^{-\frac{1+q}{p}}|\nabla u|^p\,dxdt\\
&+\!\int_{s}^{\tau}\!\int_{K_{\sigma \rho}(\bar{y})}\!\int_{K_{\sigma \rho}(\bar{y})}\!(t-s)^\frac{1}{p}(|u(x,t)+\kappa|+|u(y,t)+\kappa|)^{-\frac{1+q}{p}}\frac{|u(x,t)-u(y,t)|^p}{|x-y|^{N+sp}}\,dxdydt\\
\leq &\frac{\gamma \rho}{(1-\sigma)^{N+p}}\left(\frac{\tau-s}{\rho^\lambda}\right)^{\frac{1}{p}}\left(\sup _{t \in[s, \tau]} \int_{K_{\rho}(\bar{y}) \times\{t\}} u^q \,dx+\kappa^q \rho^N\right)^{\frac{(p-1)(q+1)}{p q}},
\end{align*}
where
\begin{align}
\label{5.1}
\lambda=\frac{N}{q}(p-q-1)+p \quad \textmd {and} \quad \kappa=\left(\frac{\tau-s}{\rho^p}\right)^{\frac{1}{q+1-p}}.
\end{align}
\end{lemma}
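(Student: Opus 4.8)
\textbf{Proof strategy for Lemma \ref{lem-5-3}.}
The plan is to test the weak supersolution formulation \eqref{1.7} (with the ``$\geq$'' sign) with a suitably constructed test function of the form $\varphi = (t-s)^{1/p}\,(u+\kappa)^{-\varepsilon}\,\eta^p$, where $\eta = \eta(x)$ is a smooth cutoff equal to $1$ on $K_{\sigma\rho}(\bar y)$ and vanishing outside $K_\rho(\bar y)$, with $|\nabla\eta|\leq \gamma/((1-\sigma)\rho)$, and $\varepsilon\in(0,p-1)$ is chosen so that the algebraic identity $\alpha = p-1-\varepsilon$ relates to the exponent $(1+q)/p$ appearing in the statement; indeed we want $\varepsilon$ close to $p-1$ so that $\alpha/p$ is small, and the negative power $-\varepsilon$ combines with $(u+\kappa)^{-1}|\nabla u|^p$ to produce the gradient term on the left. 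Since $u+\kappa$ is bounded away from $0$, this $\varphi$ is admissible (a truncation/mollification argument as in the proof of Lemma \ref{lem-3-5} handles the time-derivative and the lack of a genuine $\partial_t u$). First I would carry out the time mollification, pass to the limit using Lemma \ref{lem-2-8}, and isolate three groups of terms: the parabolic (time-derivative) term, the local elliptic term, and the nonlocal term.

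For the local term, expanding $\nabla\varphi = (t-s)^{1/p}\big[-\varepsilon(u+\kappa)^{-\varepsilon-1}\nabla u\,\eta^p + p(u+\kappa)^{-\varepsilon}\eta^{p-1}\nabla\eta\big]$ and using $|\nabla u|^{p-2}\nabla u\cdot\nabla u = |\nabla u|^p$, the first piece gives (a positive multiple of) the gradient integral on the left, while the second piece is absorbed by Young's inequality at the cost of a term $\gamma\int(t-s)^{1/p}(u+\kappa)^{p-1-\varepsilon}|\nabla\eta|^p$; note $p-1-\varepsilon = \alpha > 0$ is small, so this is a low power of $u+\kappa$. For the nonlocal term I would invoke Lemma \ref{lem-2-3} with $\tau_1 = \eta(x)$, $\tau_2 = \eta(y)$ (up to the $(t-s)^{1/p}$ factor, which is an $x,y$-independent constant at fixed $t$): the first term on the right of Lemma \ref{lem-2-3} produces, after using $|u(x)-u(y)|^p \lesssim (|u(x)+\kappa|+|u(y)+\kappa|)^{\alpha}\,|\eta(y)(u(y)+\kappa)^{\alpha/p} - \eta(x)(u(x)+\kappa)^{\alpha/p}|^p$-type manipulations combined with Lemma \ref{lem-2-1}, the nonlocal double integral on the left-hand side of the claim; the second term on the right of Lemma \ref{lem-2-3} is controlled by $\int\int (t-s)^{1/p}|\eta(x)-\eta(y)|^p(|u(x)+\kappa|^{\alpha}+|u(y)+\kappa|^{\alpha})|x-y|^{-N-sp}\,dxdy$, which after splitting into $x,y$ both in $K_\rho(\bar y)$ versus one of them outside, and using $|\nabla\eta|\lesssim 1/((1-\sigma)\rho)$, is dominated by $\gamma(1-\sigma)^{-p}\rho^{-sp}\int (t-s)^{1/p}(u+\kappa)^{\alpha}$ plus a tail contribution that is handled by the global-boundedness hypothesis folded into the definition of $\kappa$ (or is below the stated estimate). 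The time term yields $\mp\int\partial_t\varphi\cdot(\text{primitive of }|u|^{q-1}u)$ which, after integration by parts in $t$ and using $\partial_t(t-s)^{1/p}\sim (t-s)^{1/p-1}$, is bounded by $\gamma\int\int (t-s)^{1/p-1}(u+\kappa)^{q+1-\varepsilon}$; since $q+1-\varepsilon$ is slightly less than $q+1-(p-1) = q+2-p$, this is again a controlled power.

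The final and most delicate step is to collect the three ``error'' integrals — all of the schematic form $\int_s^\tau\int_{K_\rho(\bar y)}(t-s)^{\theta}(u+\kappa)^{\beta}\,dx\,dt$ with $\beta\in\{\alpha,\,q+1-\varepsilon\}$ and appropriate powers $\theta$ of $(t-s)$ — and bound them by the right-hand side of the claim. This is where the hypothesis $0<p-1<q<p^2-1$ enters crucially: one writes $(u+\kappa)^\beta = (u+\kappa)^{\beta}$ and interpolates between $\sup_t\int u^q$ and the measure of $K_\rho$ via Hölder in $x$, producing $(\sup_t\int_{K_\rho} u^q\,dx + \kappa^q\rho^N)^{\beta/q}|K_\rho|^{1-\beta/q}$ provided $\beta\leq q$, which must be checked for both values of $\beta$; the constraint $q<p^2-1$ is exactly what guarantees $q+1-\varepsilon \leq q$ stays in the usable range once $\varepsilon$ is chosen near $p-1$, while $p-1<q$ ensures the exponent $(p-1)(q+1)/(pq)$ appearing in the conclusion is the correct one and that $\alpha/q<1$. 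Then the $t$-integration $\int_s^\tau (t-s)^{\theta}\,dt$ produces the factor $(\tau-s)^{\theta+1}$, which together with the definition $\kappa = ((\tau-s)/\rho^p)^{1/(q+1-p)}$ and $\lambda = \frac{N}{q}(p-q-1)+p$ is matched, after bookkeeping of all powers of $\rho$, $(\tau-s)$, $(1-\sigma)$, to the stated bound $\frac{\gamma\rho}{(1-\sigma)^{N+p}}\big(\frac{\tau-s}{\rho^\lambda}\big)^{1/p}(\cdots)^{(p-1)(q+1)/(pq)}$. I expect the main obstacle to be precisely this last reconciliation of exponents: choosing $\varepsilon$ (equivalently $\alpha$) so that \emph{simultaneously} the nonlocal term from Lemma \ref{lem-2-3} is extracted with a positive constant, the error powers $\beta$ stay $\leq q$, and the scaling in $\rho$ and $\tau-s$ closes — this is a constrained optimization in one parameter that only works inside the advertised range of $q$, and verifying the boundary cases (and that the $\kappa^q\rho^N$ term genuinely absorbs the lower-order and tail contributions) will require care.
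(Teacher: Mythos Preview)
Your overall strategy matches the paper's almost exactly: test the supersolution inequality with $\varphi = (t-s)^{1/p}(u+\kappa)^{-\varepsilon}\zeta^p$ (after mollification), extract the local gradient term, apply Lemma~\ref{lem-2-3} for the nonlocal piece, and close by H\"older in space against $\sup_t\int u^q$. Two points need correction.

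First, $\varepsilon$ is not a free parameter to be optimized: the requirement that the gradient term carry the weight $(u+\kappa)^{-(1+q)/p}$ forces $\varepsilon+1=(1+q)/p$, i.e.\ $\varepsilon=(q+1-p)/p$. Then $\alpha=p-1-\varepsilon=(p^2-1-q)/p$, and the hypothesis $p-1<q<p^2-1$ is \emph{exactly} the statement $0<\varepsilon<p-1$ needed for Lemma~\ref{lem-2-3} to apply. There is no ``constrained optimization in one parameter''; the exponents are determined and the range of $q$ is what makes them admissible. (Your time-term error exponent $q+1-\varepsilon$ is also off; the correct power is $q-\varepsilon=(p-1)(q+1)/p$, which is $\le q$ precisely because $p-1<q$.)

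Second, and more seriously, your handling of the far nonlocal term has a gap. You apply Lemma~\ref{lem-2-3} before splitting into near and far regions, which leaves an error term $\int_{K_\rho}\int_{\mathbb{R}^N\setminus K_\rho}|\eta(x)|^p(u(y,t)+\kappa)^\alpha|x-y|^{-N-sp}\,dy\,dx$ that you then try to control by a ``global-boundedness hypothesis''. No such hypothesis is available in the lemma. The paper instead splits $\mathbb{R}^N\times\mathbb{R}^N$ into $K_\rho\times K_\rho$ and its complement \emph{first}, applies Lemma~\ref{lem-2-3} only to the near part, and for the far part uses that $\zeta(y)=0$ there together with the sign structure: on $[u(x,t)<u(y,t)]$ the integrand $U(x,y,t)\zeta^p(x)(u(x,t)+\kappa)^{-\varepsilon}$ is nonpositive and can be dropped, while on $[u(x,t)\ge u(y,t)]$ one bounds $U(x,y,t)\le u(x,t)^{p-1}$ using only $u\ge 0$. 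This yields the same $(u+\kappa)^{(p^2-1-q)/p}$ error as the other pieces, with no tail or $L^\infty$ assumption needed.
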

\begin{proof}
For simplicity, we may assume $(\bar{y},s)=(0,0)$. Choose
\begin{align*}
\varphi_h(x,t)=t^{\frac{1}{p}}\left([\![u]\!]_ {\bar{h}}+\kappa\right)^{-\frac{q+1-p}{p}} \zeta^p(x) \psi_{\varepsilon}(t) 
\end{align*}
as a test function in \eqref{1.7}, where $\zeta\in C_0^1\left(K_{\frac{\rho(1+\sigma)}{2}};[0,1]\right)$ satisfies $\zeta=1$ in $K_{\sigma\rho}$ and $|\nabla\zeta|\leq \frac{2}{(1-\sigma)\rho}$, for $\varepsilon>0$, $\psi_\varepsilon$ is a Lipschitz function such that $\psi_{\varepsilon}=1$ in $(\varepsilon, \tau-\varepsilon), \psi_{\varepsilon}=0$ outside $(0, \tau)$, and it is linearly interpolated otherwise. Then we yield that
\begin{align}
\label{5.2}
&\iint_{E_T}-u^q \partial_t \varphi_h\,dxdt+\iint_{E_T}|\nabla u|^{p-2}\nabla u\cdot\nabla \varphi_h\,dxdt\nonumber \\
&+\int_{0}^{T}\int_{\mathbb{R}^N}\int_{\mathbb{R}^N}|u(x,t)-u(y,t)|^{p-2}(u(x,t)-u(y,t))(\varphi_h(x,t)-\varphi_h(y,t))\,d\mu dt\geq 0.
\end{align}
The treatment of the first and second terms in \eqref{5.2} is similar to that of \cite[Lemma 7.3]{BDG23}, thus we have
\begin{align*}
-\iint_{E_T} u^q \partial_t \varphi_h\,dxdt\leq& q \tau^{\frac{1}{p}} \int_{K_{\rho} \times\{\tau\}} \zeta^p(x) \int_0^u s^{q-1}(s+\kappa)^{-\frac{q+1-p}{p}}\,dsdx
\end{align*}
and
\begin{align*}
\iint_{E_T}|\nabla u|^{p-2}\nabla u\cdot \nabla\varphi_h\,dxdt
\leq&-\frac{q+1-p}{2p} \int_{0}^{\tau}\int_{K_{\rho}}|\nabla u|^p (u+\kappa)^{-\frac{q+1}{p}} t^{\frac{1}{p}} \zeta^p(x)\,dxdt\\
&+\gamma \int_{0}^{\tau}\int_{K_{\rho}}(u+\kappa)^{\frac{p^2-1-q}{p}}|\nabla \zeta|^p t^{\frac{1}{p}}\,dxdt.
\end{align*} 
For the nonlocal term, we pass to the limit $h\rightarrow 0$ first and utilize Lemma \ref{lem-2-8} $(\rm{{\romannumeral 1}})$, and then send $\varepsilon\rightarrow 0$ leads to
\begin{align*}
&\int_{0}^{T}\int_{\mathbb{R}^N}\int_{\mathbb{R}^N}|u(x,t)-u(y,t)|^{p-2}(u(x,t)-u(y,t))(\varphi_h(x,t)-\varphi_h(y,t))\,d\mu dt\\
\rightarrow& \int_{0}^{\tau}\int_{\mathbb{R}^N}\int_{\mathbb{R}^N} U(x,y,t)t^{\frac{1}{p}}\left[(u(x,t)+\kappa)^{-\frac{q+1-p}{p}}\zeta^p(x)-(u(y,t)+\kappa)^{-\frac{q+1-p}{p}}\zeta^p(y)\right]\,d\mu dt\\
=&\int_{0}^{\tau}\int_{K_\rho}\int_{K_\rho}U(x,y,t)t^{\frac{1}{p}}\left[(u(x,t)+\kappa)^{-\frac{q+1-p}{p}}\zeta^p(x)-(u(y,t)+\kappa)^{-\frac{q+1-p}{p}}\zeta^p(y)\right]\,d\mu dt\\
&+2\int_{0}^{\tau}\!\!\int_{K_\rho}\!\int_{\mathbb{R}^N\backslash K_\rho}\!U(x,y,t)t^{\frac{1}{p}}\!\left[(u(x,t)+\kappa)^{-\frac{q+1-p}{p}}\zeta^p(x)\!-\!(u(y,t)+\kappa)^{-\frac{q+1-p}{p}}\zeta^p(y)\right]d\mu dt.
\end{align*}
Since $p-1<q<p^2-1$, it holds that $0<\frac{q+1-p}{p}<p-1$, thus we can employ Lemma \ref{lem-2-3} with $a=u(y,t)+\kappa$, $b=u(x,t)+\kappa$, $\tau_1=\zeta(y)$, $\tau_2=\zeta(x)$, and $\varepsilon=\frac{q+1-p}{p}$ to deduce
\begin{align*}
&U(x,y,t)\left[(u(x,t)+\kappa)^{-\frac{q+1-p}{p}}\zeta^p(x)-(u(y,t)+\kappa)^{-\frac{q+1-p}{p}}\zeta^p(y)\right]\\
\leq&- \gamma(p)\xi(\varepsilon)\left|\zeta(x)(u(x,t)+\kappa)^{\frac{p^2-q-1}{p^2}}-\zeta(y)(u(y,t)+\kappa)^{\frac{p^2-q-1}{p^2}}\right|^p\\
&+\left(\xi(\varepsilon)+1+\varepsilon^{-(p-1)}\right)|\zeta(x)-\zeta(y)|^p\left[(u(x,t)+\kappa)^{\frac{p^2-q-1}{p}}+(u(y,t)+\kappa)^{\frac{p^2-q-1}{p}}\right].
\end{align*}
Subsequently,
\begin{align*}
& \int_{0}^{\tau}\int_{K_\rho}\int_{K_\rho}U(x,y,t)t^{\frac{1}{p}}\left[(u(x,t)+\kappa)^{-\frac{q+1-p}{p}}\zeta^p(x)-(u(y,t)+\kappa)^{-\frac{q+1-p}{p}}\zeta^p(y)\right]\,d\mu dt\\
\leq&-\gamma\int_{0}^{\tau}\int_{K_\rho}\int_{K_\rho}t^{\frac{1}{p}}\frac{\left|\zeta(x)(u(x,t)+\kappa)^{\frac{p^2-q-1}{p^2}}-\zeta(y)(u(y,t)+\kappa)^{\frac{p^2-q-1}{p^2}}\right|^p}{|x-y|^{N+sp}}\,dxdydt\\
&+\gamma \int_{0}^{\tau}\int_{K_\rho}\int_{K_\rho}t^{\frac{1}{p}}\left[(u(x,t)+\kappa)^{\frac{p^2-q-1}{p}}+(u(y,t)+\kappa)^{\frac{p^2-q-1}{p}}\right]\frac{|\zeta(x)-\zeta(y)|^p}{|x-y|^{N+sp}}\,dxdydt,
\end{align*}
where $\gamma$ depends only on $N,p,s,q,\Lambda$. By the non-negativity of weak solutions, we estimate 
\begin{align*}
&\int_{0}^{\tau}\int_{K_\rho}\int_{\mathbb{R}^N\backslash K_\rho}U(x,y,t)t^{\frac{1}{p}}\left[(u(x,t)+\kappa)^{-\frac{q+1-p}{p}}\zeta^p(x)-(u(y,t)+\kappa)^{-\frac{q+1-p}{p}}\zeta^p(y)\right]\,d\mu dt\\
\leq&\gamma \int_{0}^{\tau}\int_{K_\rho}\int_{\mathbb{R}^N\backslash K_\rho\cap[u(x,t)\geq u(y,t)]}\frac{U(x,y,t)}{|x-y|^{N+sp}}t^{\frac{1}{p}}\zeta^p(x)(u(x,t)+\kappa)^{-\frac{q+1-p}{p}}\,dxdydt\\
\leq&\gamma\left(\sup_{x\in\rm{supp}\, \zeta(\cdot)}\int_{\mathbb{R}^N\backslash K_\rho}\frac{dy}{|x-y|^{N+sp}}\right)\int_{0}^{\tau}\int_{K_\rho}u(x,t)^{p-1}t^{\frac{1}{p}}\zeta^p(x)(u(x,t)+\kappa)^{-\frac{q+1-p}{p}}\,dxdt.
\end{align*}

Gathering the above estimates, we arrive at
\begin{align}
\label{5.3}
& \int_{0}^{\tau}\int_{K_{\rho}}|\nabla u|^p(u+\kappa)^{-\frac{q+1}{p}} t^{\frac{1}{p}} \zeta^p(x)\,dxdt\nonumber\\
&+\int_{0}^{\tau}\int_{K_\rho}\int_{K_\rho}t^{\frac{1}{p}}\frac{\left|\zeta(x)(u(x,t)+\kappa)^{\frac{p^2-q-1}{p^2}}-\zeta(y)(u(y,t)+\kappa)^{\frac{p^2-q-1}{p^2}}\right|^p}{|x-y|^{N+sp}}\,dxdydt\nonumber\\
 \leq& \gamma \tau^{\frac{1}{p}}\!\! \int_{K_{\rho} \times\{\tau\}} \zeta^p(x)\! \int_0^u s^{q-1}(s+\kappa)^{-\frac{q+1-p}{p}}\,dsdx\!+\!\frac{\gamma}{(1-\sigma)^p \rho^p} \int_{0}^{\tau}\!\!\int_{K_\rho} t^{\frac{1}{p}}(u+\kappa)^{\frac{p^2-1-q}{p}}\,dxdt\nonumber\\ 
&+\int_{0}^{\tau}\int_{K_\rho}\int_{K_\rho}t^{\frac{1}{p}}\left[(u(x,t)+\kappa)^{\frac{p^2-q-1}{p}}+(u(y,t)+\kappa)^{\frac{p^2-q-1}{p}}\right]\frac{|\zeta(x)-\zeta(y)|^p}{|x-y|^{N+sp}}\,dxdydt\nonumber\\
&+\gamma\left(\sup_{x\in\rm{supp}\, \zeta(\cdot)}\int_{\mathbb{R}^N\backslash K_\rho}\frac{dy}{|x-y|^{N+sp}}\right)\int_{0}^{\tau}\int_{K_\rho}u(x,t)^{p-1}t^{\frac{1}{p}}\zeta^p(x)(u(x,t)+\kappa)^{-\frac{q+1-p}{p}}\,dxdt\nonumber\\
=:&I_1+I_2+I_3+I_4.
\end{align}

\textbf{Estimate of $I_1$:} From the hypothesis $0<p-1<q$, one can easily get
\begin{align*}
\frac{(p-1)(q+1)}{pq} \in(0,1) \quad \text { and } \quad \int_0^u s^{q-1}(s+\kappa)^{-\frac{q+1-p}{p}}\,ds \leq \gamma u^{\frac{(p-1)(q+1)}{p}}.
\end{align*}	
Thus we apply H\"{o}lder's inequality to obtain
\begin{align}
\label{5.4}
I_1\leq\gamma \tau^{\frac{1}{p}} \int_{K_{\rho} \times\{\tau\}} u^{\frac{(p-1)(q+1)}{p}}\,dx \leq \gamma \rho\left(\frac{\tau}{\rho^\lambda}\right)^{\frac{1}{p}}\left(\sup_{t \in[0,\tau]} \int_{K_{\rho} \times\{t\}} u^q\,dx\right)^{\frac{(p-1)(q+1)}{p q}}.
\end{align}	

\textbf{Estimate of $I_2$:} Again using H\"{o}lder's inequality and the definition of $\kappa$, it follows that
\begin{align}
\label{5.5}
I_2
& \leq \frac{\gamma \tau^{\frac{1}{p}}}{(1-\sigma)^p} \frac{\tau}{\rho^p} \kappa^{-(q+1-p)} \sup_{t \in[0,\tau]} \int_{K_{\rho} \times\{t\}}(u+\kappa)^{\frac{(p-1)(q+1)}{p}}\,dx\nonumber \\
& \leq \frac{\gamma \rho}{(1-\sigma)^p}\left(\frac{\tau}{\rho^\lambda}\right)^{\frac{1}{p}}\left(\sup _{t \in[0,\tau]} \int_{K_{\rho} \times\{t\}} u^q\,dx+\kappa^q \rho^N\right)^{\frac{(p-1)(q+1)}{pq}}.
\end{align}

\textbf{Estimate of $I_3$:} By exchanging the role of $x$ and $y$ and by exploiting $\rho\in(0,1]$, we have
\begin{align}
\label{5.6}
I_3&\leq\frac{\gamma\rho^{p-sp}}{(1-\sigma)^p \rho^p} \int_{0}^{\tau}\int_{K_\rho} t^{\frac{1}{p}}(u+\kappa)^{\frac{p^2-1-q}{p}}\,dxdt\nonumber\\
&\leq \frac{\gamma \rho}{(1-\sigma)^p}\left(\frac{\tau}{\rho^\lambda}\right)^{\frac{1}{p}}\left(\sup _{t \in[0,\tau]} \int_{K_{\rho} \times\{t\}} u^q\,dx+\kappa^q \rho^N\right)^{\frac{(p-1)(q+1)}{pq}}.
\end{align}

\textbf{Estimate of $I_4$:} Notice that 
\begin{align*}\frac{|y|}{|y-x|}\leq 1+\frac{|x|}{|y-x|}\leq 1+\frac{1+\sigma}{1-\sigma}\leq\frac{2}{1-\sigma}
\end{align*}
for any  $|x|\leq \frac{(1+\sigma)\rho}{2}$ and $|y|\geq\rho$, thus there holds
\begin{align}
\label{5.7}
I_4&\leq \frac{\gamma 2^{N+sp}}{(1-\sigma)^{N+sp}\rho^{sp}}\int_{0}^{\tau}\int_{K_{\rho}} t^\frac{1}{p} (u(x,t)+\kappa)^{\frac{p^2-1-q}{p}}\,dxdt\nonumber\\
&\leq \frac{\gamma 2^{N+sp}\rho}{(1-\sigma)^{N+sp}}\left(\frac{\tau}{\rho^\lambda}\right)^{\frac{1}{p}}\left(\sup _{t \in[0,\tau]} \int_{K_{\rho} \times\{t\}} u^q\,dx+\kappa^q \rho^N\right)^{\frac{(p-1)(q+1)}{pq}}.
\end{align}

On the other hand, the left-hand side of \eqref{5.3} can be estimated by utilizing Lemma \ref{lem-2-1} with $\alpha=\frac{p^2-q-1}{p^2}>0$ and the property $\zeta=1$ in $K_{\sigma\rho}$, it gives 
\begin{align}
\label{5.8}
&\int_{0}^{\tau}\int_{K_\rho}\int_{K_\rho}t^{\frac{1}{p}}\frac{\left|\zeta(x)(u(x,t)+\kappa)^{\frac{p^2-q-1}{p^2}}-\zeta(y)(u(y,t)+\kappa)^{\frac{p^2-q-1}{p^2}}\right|^p}{|x-y|^{N+sp}}\,dxdydt\nonumber\\
\geq&\int_{0}^{\tau}\int_{K_{\sigma\rho}}\int_{K_{\sigma\rho}}t^{\frac{1}{p}}\frac{\left|(u(x,t)+\kappa)^{\frac{p^2-q-1}{p^2}}-(u(y,t)+\kappa)^{\frac{p^2-q-1}{p^2}}\right|^p}{|x-y|^{N+sp}}\,dxdydt\nonumber\\
\geq&\int_{0}^{\tau}\int_{K_{\sigma\rho}}\int_{K_{\sigma\rho}}t^{\frac{1}{p}}\left(|u(x,t)+\kappa|+|u(y,t)+\kappa|\right)^{-\frac{q+1}{p}}\frac{|u(x,t)-u(y,t)|^p}{|x-y|^{N+sp}}\,dxdydt.
\end{align}
Combining estimates \eqref{5.3}--\eqref{5.8}, we get the conclusion.
\end{proof}
Given Lemma \ref{lem-5-3}, we can explore the following result.
\begin{lemma}
\label{lem-5-4}
Let $0<p-1<q<p^2-1$, $\rho\in(0,1]$ and $K_\rho(\bar{y})\times[s,\tau]\subset\subset E_T$. Assume that $u$ is a nonnegative weak supersolution to \eqref{1.1}. For all $\delta, \sigma\in(0,1)$, there exists a constant $\gamma>0$ depending only on $N,p,s,q,\Lambda$ such that
\begin{align*}
& \frac{1}{\rho} \int_{s}^{\tau}\int_{K_{\sigma \rho}(\bar{y})}|\nabla u|^{p-1}\,dxdt+\frac{1}{\rho}\int_{s}^{\tau}\int_{K_{\sigma \rho}(\bar{y})}\int_{K_{\sigma \rho}(\bar{y})}\frac{|u(x,t)-u(y,t)|^{p-1}}{|x-y|^{N+sp-1}}\,dxdydt\\
& \quad \leq \delta \sup _{t \in[s, \tau]} \int_{K_{\rho}(\bar{y}) \times\{t\}} u^q \,dx+\frac{\gamma}{\left[\delta^{q+1}(1-\sigma)^{p q}\right]^{\frac{N+p}{q+1-p}}}\left(\frac{\tau-s}{\rho^\lambda}\right)^{\frac{q}{q+1-p}},
\end{align*}
where $\lambda$ and $\kappa$ defined in \eqref{5.1}.
\end{lemma}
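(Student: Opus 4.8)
The plan is to deduce this estimate from Lemma \ref{lem-5-3} by absorbing the weight $(u+\kappa)^{-(q+1)/p}$ through Hölder's inequality. First I would write, for the local term,
\begin{align*}
\int_{s}^{\tau}\int_{K_{\sigma\rho}(\bar y)}|\nabla u|^{p-1}\,dxdt
=\int_{s}^{\tau}\int_{K_{\sigma\rho}(\bar y)}\Bigl((t-s)^{\frac1p}(u+\kappa)^{-\frac{q+1}{p}}|\nabla u|^p\Bigr)^{\frac{p-1}{p}}\cdot(t-s)^{-\frac{p-1}{p^2}}(u+\kappa)^{\frac{(q+1)(p-1)}{p^2}}\,dxdt,
\end{align*}
and apply Hölder with exponents $\frac{p}{p-1}$ and $p$. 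The first factor raised to $\frac{p}{p-1}$ is exactly the integrand controlled by Lemma \ref{lem-5-3}, so it contributes the right-hand side of that lemma to the power $\frac{p-1}{p}$. For the remaining factor raised to the power $p$ one gets $\int_s^\tau\int_{K_{\sigma\rho}}(t-s)^{-\frac{p-1}{p}}(u+\kappa)^{\frac{(q+1)(p-1)}{p}}\,dxdt$; the time singularity is integrable since $\frac{p-1}{p}<1$, yielding a factor $(\tau-s)^{1/p}$, and the spatial part is bounded by $(\tau-s)\cdot\sup_t\int_{K_\rho}(u+\kappa)^{q}\,dx$ after noting that the exponent $\frac{(q+1)(p-1)}{p}$ is controlled by $q$ via Young/Hölder together with the measure estimate — here one uses $\frac{(q+1)(p-1)}{pq}<1$, which is precisely where $0<p-1<q$ enters. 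The nonlocal term is handled identically, splitting $\frac{|u(x,t)-u(y,t)|^{p-1}}{|x-y|^{N+sp-1}}$ as the $\frac{p-1}{p}$ power of the Gagliardo-type integrand from Lemma \ref{lem-5-3} times $|x-y|^{-(N+sp)/p+?}$; the remaining kernel exponent works out to an integrable one over $K_{\sigma\rho}\times K_{\sigma\rho}$ precisely because $\rho\le 1$ and the bounded domain absorbs the extra powers of $|x-y|$.

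At this point one has an estimate of the shape
\begin{align*}
\frac1\rho\,(\text{LHS})\le \frac{\gamma}{(1-\sigma)^{N+p}}\left(\frac{\tau-s}{\rho^\lambda}\right)^{\frac1p\cdot\frac{p-1}{p}}\left(\frac{\tau-s}{\rho^\lambda}\right)^{\frac1p}\Bigl(\sup_{t\in[s,\tau]}\int_{K_\rho(\bar y)\times\{t\}}u^q\,dx+\kappa^q\rho^N\Bigr)^{\beta}
\end{align*}
for the appropriate exponent $\beta=\frac{(p-1)(q+1)}{pq}\cdot\frac{p-1}{p}+\frac{q+1}{q}\cdot\frac1p$ or similar; the key arithmetic fact to verify is that this combined exponent $\beta$ is \emph{strictly less than one}, which is guaranteed by $q<p^2-1$. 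Once $\beta<1$, I would apply Young's inequality with exponents $\frac1\beta$ and $\frac1{1-\beta}$ to the product
\begin{align*}
C(\sigma,\rho,\tau-s)\cdot\Bigl(\sup_{t}\int_{K_\rho}u^q\,dx+\kappa^q\rho^N\Bigr)^{\beta}\le \delta\Bigl(\sup_t\int_{K_\rho}u^q\,dx+\kappa^q\rho^N\Bigr)+\gamma(\delta)\,C(\sigma,\rho,\tau-s)^{\frac1{1-\beta}},
\end{align*}
splitting off the $\delta$-small term. The term $\delta\kappa^q\rho^N=\delta\left(\frac{\tau-s}{\rho^p}\right)^{\frac{q}{q+1-p}}\rho^N$ is then absorbed into the stated error $\frac{\gamma}{[\delta^{q+1}(1-\sigma)^{pq}]^{(N+p)/(q+1-p)}}\left(\frac{\tau-s}{\rho^\lambda}\right)^{q/(q+1-p)}$, using $\lambda=\frac Nq(p-q-1)+p$ so that $\rho^{-\lambda q/(q+1-p)}$ matches $\rho^N\rho^{-pq/(q+1-p)}$; the remaining $\delta$-term keeps $\sup_t\int_{K_\rho}u^q\,dx$, giving the first term on the right-hand side. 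Tracking the precise power of $\delta$ and $(1-\sigma)$ that emerges from $\gamma(\delta)\,C^{1/(1-\beta)}$ against the claimed $[\delta^{q+1}(1-\sigma)^{pq}]^{-(N+p)/(q+1-p)}$ requires bookkeeping $\frac{1}{1-\beta}$ in terms of $N,p,q$, and checking $\frac{1}{1-\beta}=\frac{N+p}{q+1-p}$ (up to constants).

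The main obstacle I anticipate is \textbf{not} any single hard inequality but rather the exponent bookkeeping: one must confirm simultaneously that the time-weight singularity $(t-s)^{-(p-1)/p}$ stays integrable, that the spatial exponent on $(u+\kappa)$ is dominated by $q$ (needs $p-1<q$), that the final self-improving exponent $\beta<1$ (needs $q<p^2-1$), and that the power of $\rho$ produced matches $\lambda$ exactly — a small slip in any of these four bounds breaks the absorption. A secondary technical point is justifying the use of $\varphi_h$-mollified test functions already handled inside Lemma \ref{lem-5-3}, so here I only need the clean conclusion of that lemma; no further mollification argument is required. The role of $\rho\le1$ is purely to absorb the extra factors $\rho^{p-sp}$ and $\rho^{sp}$ that appear when converting Gagliardo seminorms into $|\nabla u|^{p-1}$-type quantities, and it will enter at exactly one line of the nonlocal estimate.
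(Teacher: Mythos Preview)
Your approach is exactly the paper's: H\"older-split $|\nabla u|^{p-1}$ (and the nonlocal analogue) into the $(p-1)/p$ power of the weighted integrand from Lemma~\ref{lem-5-3} times a complementary factor, bound the latter by integrating the time singularity $t^{(1-p)/p}$ and applying spatial H\"older, then Young away the $\sup$-term. So the skeleton is right.

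Two concrete bookkeeping points to fix. First, the exponent $\beta$ that emerges on $\big(\sup_t\int u^q+\kappa^q\rho^N\big)$ is exactly
\[
\beta=\frac{(p-1)(q+1)}{pq},
\]
not the mixed expression you wrote: both H\"older factors carry the \emph{same} power $\frac{(p-1)(q+1)}{pq}$ on the sup-quantity (Lemma~\ref{lem-5-3} already produces this power, and so does the spatial H\"older on the complementary factor), and $\frac{p-1}{p}+\frac1p=1$ reassembles it. Second, $\beta<1$ is equivalent to $(p-1)(q+1)<pq$, i.e.\ to $q>p-1$, \emph{not} to $q<p^2-1$; the upper bound $q<p^2-1$ is consumed inside Lemma~\ref{lem-5-3} (it is what allows $\varepsilon=\frac{q+1-p}{p}\in(0,p-1)$ in Lemma~\ref{lem-2-3}) and plays no role in the Young step here. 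With the correct $\beta$ one gets $1/(1-\beta)=pq/(q+1-p)$, which immediately yields the claimed power $q/(q+1-p)$ on $(\tau-s)/\rho^\lambda$; the stated $\delta$- and $(1-\sigma)$-exponents in the lemma are simply (harmless) overestimates of the sharp Young exponents since $\delta,1-\sigma\in(0,1)$. Finally, your displayed intermediate inequality has a mismatched time exponent: both factors contribute $\big((\tau-s)/\rho^\lambda\big)^{1/p}$ \emph{before} raising to $(p-1)/p$ and $1/p$, so the combined power is $\frac{1}{p}\cdot\frac{p-1}{p}+\frac{1}{p}\cdot\frac{1}{p}=\frac{1}{p}$, not $\frac{p-1}{p^2}+\frac{1}{p}$.
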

\begin{proof}
Let $(\bar{y},s)=(0,0)$. We derive from H\"{o}lder's inequality that
\begin{align}
\label{5.9}
\int_{0}^{\tau}\int_{K_{\sigma \rho}}|\nabla u|^{p-1}\,dxdt \leq& \left(\int_{0}^{\tau}\int_{K_{\sigma \rho}}|\nabla u|^p(u+\kappa)^{-\frac{q+1}{p}} t^{\frac{1}{p}}\,dxdt\right)^{\frac{p-1}{p}} \nonumber\\
& \times\left(\int_{0}^{\tau}\int_{K_{\sigma \rho}}(u+\kappa)^{\frac{(p-1)(q+1)}{p}} t^{\frac{1-p}{p}}\,dxdt\right)^{\frac{1}{p}}. 
\end{align}
The estimate of the first integral on the right-hand side of \eqref{5.9} yields from Lemma \ref{lem-5-3}. Coming to estimate the second integral, there holds by using H\"{o}lder's inequality that
\begin{align}
\label{5.10}
& \int_{0}^{\tau}\int_{K_{\sigma\rho}}(u+\kappa)^{\frac{(p-1)(q+1)}{p}} t^{\frac{1-p}{p}}\,dxdt\nonumber\\
& \leq \int_0^\tau t^{\frac{1-p}{p}}\,dt \times \sup _{t \in[0, \tau]} \int_{K_{\sigma \rho} \times\{t\}}(u+\kappa)^{\frac{(p-1)(q+1)}{p}}\,dx\nonumber\\
& \leq \gamma \rho\left(\frac{\tau}{\rho^\lambda}\right)^{\frac{1}{p}}\left(\sup_{t \in[0,\tau]} \int_{K_{\sigma \rho} \times\{t\}} u^q\,dx+\kappa^q \rho^N\right)^{\frac{(p-1)(q+1)}{p q}}.
\end{align}
Still by H\"{o}lder's inequality, we compute
\begin{align}
\label{5.11}
&\int_{0}^{\tau}\int_{K_{\sigma \rho}}\int_{K_{\sigma \rho}}\frac{|u(x,t)-u(y,t)|^{p-1}}{|x-y|^{N+sp-1}}\,dxdydt\nonumber\\
\leq& \left(\int_{0}^{\tau}\int_{K_{\sigma \rho}}\int_{K_{\sigma \rho}}\frac{|u(x,t)-u(y,t)|^p}{|x-y|^{N+sp}}(u(x,t)+\kappa)^{-\frac{q+1}{p}} t^{\frac{1}{p}}\,dxdydt\right)^{\frac{p-1}{p}}\nonumber\\
&\times\left(\int_{0}^{\tau}\int_{K_{\sigma \rho}}\int_{K_{\sigma \rho}}\frac{(u(x,t)+\kappa)^{\frac{(p-1)(q+1)}{p}}}{|x-y|^{N+sp-p}}t^{\frac{1-p}{p}}\,dxdydt\right)^{\frac{1}{p}}.
\end{align}
The first integral on the right-hand side of \eqref{5.11} has been provided in Lemma \ref{lem-5-3}. We deal with the second integral as in \eqref{5.10} and obtain
\begin{align*}
&\int_{0}^{\tau}\int_{K_{\sigma \rho}}\int_{K_{\sigma \rho}}\frac{(u(x,t)+\kappa)^{\frac{(p-1)(q+1)}{p}}}{|x-y|^{N+sp-p}}t^{\frac{1-p}{p}}\,dxdydt\\
\leq &\gamma \rho\left(\frac{\tau}{\rho^\lambda}\right)^{\frac{1}{p}}\left(\sup_{t \in[0,\tau]} \int_{K_{\sigma \rho} \times\{t\}} u^q\,dx+\kappa^q \rho^N\right)^{\frac{(p-1)(q+1)}{p q}},
\end{align*}
where we used the assumption $\sigma\in(0,1)$, and the radius $\rho\in(0,1]$. Putting the above estimates together, employing Young's inequality as well as the definition of $\kappa$ in \eqref{5.1}, we infer that
\begin{align*}
& \int_{0}^{\tau}\int_{K_{\sigma \rho}}|\nabla u|^{p-1}\,dxdt+\int_{0}^{\tau}\int_{K_{\sigma \rho}}\int_{K_{\sigma \rho}}\frac{|u(x,t)-u(y,t)|^{p-1}}{|x-y|^{N+sp-1}}\,dxdydt \\
& \leq \frac{\gamma \rho}{(1-\sigma)^{N+p}}\left(\frac{\tau}{\rho^\lambda}\right)^{\frac{1}{p}}\left(\sup _{t \in[0, \tau]} \int_{K_{\rho} \times\{t\}} u^q \,dx+\kappa^q \rho^N\right)^{\frac{(p-1)(q+1)}{p q}} \\
& \leq \delta \rho \sup _{t \in[0, \tau]} \int_{K_{\rho} \times\{t\}} u^q\,dx +\frac{\gamma \rho}{\left[\delta^{q+1}(1-\sigma)^{p q}\right]^{\frac{N+p}{q+1-p}}}\left(\frac{\tau}{\rho^\lambda}\right)^{\frac{q}{q+1-p}},
\end{align*}
where the constant $\delta\in(0,1)$ depends on $p,q$. Diving both sides by $\rho$, we arrive at the claim.
\end{proof}

Hereafter, we give the proof of Proposition \ref{pro-5-1}.

\begin{proof}[\textbf{Proof of Proposition \ref{pro-5-1}}]
Assume that $(\bar{y},s)=(0,0)$. Denote
\begin{align*}
\rho_j=\sum_{n=0}^j \frac{\rho}{2^n}, \quad \widetilde{\rho}_j=\frac{\rho_j+\rho_{j+1}}{2}, \quad \widehat{\rho}_j=\frac{3\rho_j+\rho_{j+1}}{4}, \ \ j=0,1,2\ldots
\end{align*}
and
\begin{align*}
\quad K_j=K_{\rho_j}, \quad \widetilde{K}_j=K_{\widetilde{\rho}_j}, \quad \widehat{K}_j=K_{\widehat{\rho}_j}, \ \ j=0,1,2\ldots.
\end{align*}
Consider the function $\zeta\in C_0^1(\widetilde{K}_j;[0,1])$ that vanishes outside $\widehat{K}_j$, equals to $1$ in $K_j$ such that $|\nabla\zeta|\leq 2^{j+3}/\rho$. Testing \eqref{1.7} with $\zeta$, there holds for any $t_1, t_2 \in[0, \tau]$ that
\begin{align}
\label{5.12}
\int_{\widetilde{K}_j \times\{t_1\}} u^q \zeta\,dx \leq& \int_{\widetilde{K}_j \times\{t_2\}} u^q \zeta\,dx+\frac{2^{j+3}}{\rho} \int_{t_1}^{t_2}\int_{\widetilde{K}_j}|\nabla u|^{p-1}\,dxdt\nonumber\\
&+\frac{2^{j+3}}{\rho}\int_{t_1}^{t_2}\int_{\widetilde{K}_j}\int_{\widetilde{K}_j}\frac{|u(x,t)-u(y,t)|^{p-1}}{|x-y|^{N+sp-1}}\,dxdydt\nonumber\\
&+2\int_{t_1}^{t_2}\int_{\widetilde{K}_j}\int_{\mathbb{R}^N\backslash\widetilde{K}_j}\frac{|u(x,t)-u(y,t)|^{p-1}}{|x-y|^{N+sp}}\zeta(x)\,dxdydt.
\end{align}
Through direct computation,
\begin{align*}
\frac{|y|}{|y-x|}\leq 1+\frac{|x|}{|y-x|}\leq 1+\frac{\widehat{\rho}_j}{\widetilde{\rho}_j-\widehat{\rho}_j}\leq 2^{j+3}
\end{align*}
for any  $|x|\leq\widehat{\rho}_j$ and $|y|\geq\widetilde{\rho}_j$, thus the last term in \eqref{5.12} can be estimated as
\begin{align}
\label{5.13}
&\int_{t_1}^{t_2}\int_{\widetilde{K}_j}\int_{\mathbb{R}^N\backslash\widetilde{K}_j}\frac{|u(x,t)-u(y,t)|^{p-1}}{|x-y|^{N+sp}}\zeta(x)\,dxdydt\nonumber\\
\leq& \frac{\tau 2^{(j+3)(N+sp)}}{\rho^{p-N}} [\text{Tail}_\infty(u;0,\rho;0,\tau)]^{p-1}.
\end{align}
For $t_1>t_2$, we may test \eqref{1.7} by $-\zeta$ instead of $\zeta$. To proceed, we set
\begin{align*}
\int_{K_{2 \rho} \times\{t_2\}} u^q\,dx=\inf_{t\in[0,\tau]} \int_{K_{2\rho} \times\{t\}} u^q\,dx=: A .
\end{align*}
Define 
\begin{align*}
S_j:=\sup_{t\in[0,\tau]} \int_{K_j \times\{t\}} u^q \,dx.
\end{align*}
Since $t_1\in[0,\tau]$ is arbitrary, it tells from \eqref{5.12} and \eqref{5.13} that
\begin{align}
\label{5.14}
S_j \leq& A+\frac{2^{j+3}}{\rho}\int_{0}^{\tau} \int_{\widetilde{K}_j} |\nabla u|^{p-1}\,dxdt+\frac{2^{j+3}}{\rho}\int_{0}^{\tau}\int_{\widetilde{K}_j}\int_{\widetilde{K}_j}\frac{|u(x,t)-u(y,t)|^{p-1}}{|x-y|^{N+sp-1}}\,dxdydt\nonumber\\
&+\frac{\tau 2^{(j+4)(N+sp)}}{\rho^{p-N}} [\text{Tail}_\infty(u;0,\rho;0,\tau)]^{p-1}.
\end{align} 
From the definition of $\rho_j$, we can get $\rho\leq\rho_j<2\rho$. Selecting $\sigma$ small enough such that $1-\sigma\geq 2^{-(j+4)}$, and we deduce from Lemma \ref{lem-5-4} that
\begin{align*}
& \frac{1}{2 \rho} \int_{0}^{\tau} \int_{\widetilde{K}_j}|\nabla u|^{p-1}\,dxdt+\frac{1}{2 \rho}
\int_{0}^{\tau}\int_{\widetilde{K}_j}\int_{\widetilde{K}_j}\frac{|u(x,t)-u(y,t)|^{p-1}}{|x-y|^{N+sp-1}}\,dxdydt\\
\leq& \frac{1}{\rho_{j+1}}\int_{0}^{\tau} \int_{\widetilde{K}_j} |\nabla u|^{p-1}\,dxdt+\frac{1}{\rho_{j+1}}\int_{0}^{\tau}\int_{\widetilde{K}_j}\int_{\widetilde{K}_j}\frac{|u(x,t)-u(y,t)|^{p-1}}{|x-y|^{N+sp-1}}\,dxdydt\\
\leq& \delta \sup_{t \in[0,\tau]} \int_{K_{j+1} \times\{t\}} u^q\,dx +\frac{\gamma}{\left[\delta^{q+1}(1-\sigma)^{p q}\right]^{\frac{N+p}{q+1-p}}}\left(\frac{\tau}{\rho_{j+1}^\lambda}\right)^{\frac{q}{q+1-p}} \\
\leq& \delta S_{j+1}+\frac{\gamma 2^{j \frac{p q(N+p)}{q+1-p}}}{\delta^{\frac{(q+1)(N+p)}{q+1-p}}}\left(\frac{\tau}{\rho^\lambda}\right)^{\frac{q}{q+1-p}}.
\end{align*}
Multiplying both sides of the above inequality by $2^{j+4}$ leads to 
\begin{align*}
& \frac{2^{j+3}}{\rho} \int_{0}^{\tau} \int_{\widetilde{K}_j}|\nabla u|^{p-1}\,dxdt+\frac{2^{j+3}}{\rho}
\int_{0}^{\tau}\int_{\widetilde{K}_j}\int_{\widetilde{K}_j}\frac{|u(x,t)-u(y,t)|^{p-1}}{|x-y|^{N+sp-1}}\,dxdydt\\
\leq& \delta 2^{j+4} S_{j+1}+\frac{\gamma 2^{j\left[1+\frac{p q(N+p)}{q+1-p}\right] }}{\delta^{\frac{(q+1)(N+p)}{q+1-p}}}\left(\frac{\tau}{\rho^\lambda}\right)^{\frac{q}{q+1-p}}.
\end{align*}
For some $\varepsilon\in(0,1)$, letting $\delta=\varepsilon/2^{j+4}$ to get
\begin{align}
\label{5.15}
&\frac{ 2^{j+3}}{\rho} \int_{0}^{\tau}\int_{\widetilde{K}_j}|\nabla u|^{p-1}\,dxdt+\frac{2^{j+3}}{\rho}
\int_{0}^{\tau}\int_{\widetilde{K}_j}\int_{\widetilde{K}_j}\frac{|u(x,t)-u(y,t)|^{p-1}}{|x-y|^{N+sp-1}}\,dxdydt\nonumber\\
\leq& \varepsilon S_{j+1}+\gamma(\varepsilon) b^j\left(\frac{\tau}{\rho^\lambda}\right)^{\frac{q}{q+1-p}}
\end{align}
with $b=b(p,q,N)>1$. By virtue of \eqref{5.14} and \eqref{5.15}, we have for any $ j \in \mathbb{N}\cup \{0\}$ that
\begin{align}
\label{5.16}
S_j \leq \varepsilon S_{j+1}+\gamma(\varepsilon) b^j\left(A+\left(\frac{\tau}{\rho^\lambda}\right)^{\frac{q}{q+1-p}}+\tau \rho^{N-p}[\text{Tail}_\infty(u;0,\rho;0,\tau)]^{p-1}\right).
\end{align}
Iterating inequality \eqref{5.16} gives that
\begin{align*}
S_0 \leq \varepsilon^j S_j+\gamma(\varepsilon)\left(A+\left(\frac{\tau}{\rho^\lambda}\right)^{\frac{q}{q+1-p}}+\tau \rho^{N-p}[\text{Tail}_\infty(u;0,\rho;0,\tau)]^{p-1}\right) \sum_{i=0}^{j-1}(\varepsilon b)^i .
\end{align*}
Then, by letting $\varepsilon=1/2b$, we get $\sum_{i=0}^{j-1}(\varepsilon b)^i\leq 2$, and meanwhile $\varepsilon^j S_j \rightarrow 0$ as $j \rightarrow \infty$. The proof is completed by recalling the definitions of $S_j$ and $A$.
\end{proof}

Finally, Theorem \ref{thm-1-5} follows from a combination of Theorem \ref{thm-1-2} and Proposition \ref{pro-5-1}.

\section{Expansion of positivity}
\label{sec6}

This section aims to obtain the following result regarding the expansion of positivity, which derives the pointwise estimate for weak supersolutions from the measure theoretical condition.

\begin{proposition}
\label{pro-6-1}
Let $0<p-1\leq q$. Assume that $u\in L^\infty(\mathbb {R}^N\times(0,T))$ is a nonnegative, weak supersolution to \eqref{1.1}. If for some constants $M>0$, $\alpha \in(0,1)$ we have
\begin{align}
\label{6.1}
|[u(\cdot, t_0) \geq M] \cap K_{\rho}(x_0)| \geq \alpha|K_{\rho}|,
\end{align}
then there exist parameters $\delta, \eta \in(0,1)$ depending only on $N,p,s,q,\Lambda, \alpha$ such that
\begin{align*}
u \geq \eta M \quad \text { a.e. in } K_{2 \rho}(x_0) \times\left(t_0+\tfrac{1}{2} \delta M^{q+1-p} \rho^p, t_0+\delta M^{q+1-p} \rho^p\right],
\end{align*}
provided
\begin{align*}
K_{8 \rho}(x_0) \times(t_0, t_0+\delta M^{q+1-p} \rho^p] \subset E_T.
\end{align*}

\end{proposition}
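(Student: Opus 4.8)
The plan is to implement the De Giorgi--DiBenedetto \emph{expansion of positivity} scheme, adapted to the doubly nonlinear mixed structure. After a translation we may take $(x_0,t_0)=(0,0)$, and the whole argument will be run in the intrinsic geometry in which time is measured in units of $M^{q+1-p}\rho^p$; since $p-1\le q$ this exponent is nonnegative, and this is precisely the scaling that removes $M$ from every structural inequality (unlike the homogeneous case, we cannot simply divide $u$ by $M$, so the non‑homogeneity must be absorbed geometrically). The inclusion $K_{8\rho}(x_0)\times(t_0,t_0+\delta M^{q+1-p}\rho^p]\subset E_T$ is exactly the room needed to legitimize the test functions below: the annulus between $K_{4\rho}$ and $K_{8\rho}$ carries the cutoffs and keeps all nonlocal tail terms finite and correctly scaled.

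\emph{Step 1: expansion of positivity in time.} I would first show that the time‑slice measure information \eqref{6.1} propagates forward: there exist $\sigma,\delta\in(0,1)$, depending only on $N,p,s,q,\Lambda,\alpha$, with
\[
\big|[\,u(\cdot,t)\ge \sigma M\,]\cap K_{4\rho}(x_0)\big|\ \ge\ \tfrac{\alpha}{2\cdot 4^{N}}\,|K_{4\rho}|
\qquad\text{for all }t\in(t_0,\,t_0+\delta M^{q+1-p}\rho^p].
\]
This comes from a logarithmic energy estimate: one tests the weak supersolution formulation with $\Psi'(u)\,\zeta^p$, where $\Psi$ is the standard truncated logarithm built on the level $\sigma M$ and $\zeta$ is a cutoff with $\zeta\equiv1$ on $K_{4\rho}$ and support in $K_{8\rho}$, handling the $|u|^{q-1}u$ time term through the convexity of the auxiliary functions $\mathbf h_{\pm}$ from Lemma~\ref{lem-2-2}. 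The nonlocal contribution $\mathcal E(u,\Psi'(u)\zeta^p,t)$ is dealt with by a monotonicity/algebraic inequality in the spirit of Lemma~\ref{lem-2-3}, using that $u\ge0$ is globally bounded so its tail is controlled; choosing $\delta$ small makes the resulting non‑homogeneous error term negligible, which forces the stated lower bound on the good set.

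\emph{Step 2: reduction of the measure of the bad set.} Next I would run a De Giorgi iteration in $K_{2\rho}$ over the slab $I:=(t_0+\tfrac14\delta M^{q+1-p}\rho^p,\,t_0+\delta M^{q+1-p}\rho^p]$, applying the Caccioppoli inequality of Lemma~\ref{lem-4-1} to the truncations $(k_j-u)_+$ with $k_j=\sigma M\,2^{-j}$ (admissible since $u_{k_j}=k_j-(u-k_j)_-$ is again a weak supersolution by Proposition~\ref{pro-3-2}). Feeding the measure bound from Step~1 into the Poincar\'e‑type inequality of Lemma~\ref{lem-2-6} on each time slice, the usual isoperimetric argument yields that $\big|[\,u<k_j\,]\cap(K_{2\rho}\times I)\big|$ decays in $j$, so for $j_\ast=j_\ast(N,p,s,q,\Lambda,\alpha)$ large enough
\[
\big|[\,u< \sigma M\,2^{-j_\ast}\,]\cap(K_{2\rho}\times I)\big|\ \le\ \nu\,|K_{2\rho}\times I|,
\]
where $\nu$ is the (data‑dependent) smallness threshold of the De Giorgi–type Lemma~\ref{lem-6-2}. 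It is essential here that we work with the \emph{lower} truncation: from $u\ge0$ one has $(k_j-u)_+\le k_j\le\sigma M$ pointwise, so every tail term appearing in \eqref{4.1} is bounded by a multiple of $\sigma M$ and scales as required.

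\emph{Step 3: conclusion, and the main obstacle.} Applying Lemma~\ref{lem-6-2} with the level $k=\sigma M\,2^{-j_\ast}$ then gives $u\ge \tfrac12\sigma M\,2^{-j_\ast}=:\eta M$ a.e.\ in $K_{2\rho}(x_0)\times(t_0+\tfrac12\delta M^{q+1-p}\rho^p,\,t_0+\delta M^{q+1-p}\rho^p]$, which is the claim, with $\eta,\delta$ depending only on $N,p,s,q,\Lambda,\alpha$. I expect the genuine difficulty to be \textbf{Step~1}: producing the algebraic inequality that makes the nonlocal part of the logarithmic estimate cooperate while keeping all constants independent of $M$ and $\rho$. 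The degenerate term through $\mathbf h_{\pm}$ and the mismatch of scalings between the local ($\rho^{-p}$) and nonlocal ($\rho^{-sp}$) operators are the source of the bookkeeping, and it is the intrinsic time scaling $M^{q+1-p}\rho^p$ together with $\rho\le 1$‑type estimates on the tail that ultimately reconcile them.
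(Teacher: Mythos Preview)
Your three–step outline (time propagation of the measure, shrinking of the bad set, De Giorgi–type lemma) is exactly the paper's architecture, but you have interchanged the tools the paper actually uses in the first two steps. In the paper, Step~1 (Lemma~\ref{lem-6-3}) is \emph{not} a logarithmic estimate: it is obtained directly from the Caccioppoli inequality \eqref{4.1} for $(u-M)_-$, using a space–only cutoff; the nonlocal tail is harmless because $(u-M)_-\le M$, and no analogue of Lemma~\ref{lem-2-3} is invoked. The logarithmic machinery sits in Step~2 (Lemmas~\ref{lem-6-4}–\ref{lem-6-5}): one tests with $\zeta^p[u_k+ck]^{1-p}$, which produces the functions $\Phi_k$, $\Psi_k$ and, after the Poincar\'e inequality of Lemma~\ref{lem-2-6}, the differential inequality~\eqref{6.17}; iterating it over the geometric levels $k_j=c^jM$ yields the measure shrinking. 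Your Step~2, by contrast, proposes the classical De Giorgi isoperimetric shrinking with dyadic levels. That route can be made to work for this equation, but it is not what the paper does, and your identification of the ``genuine difficulty'' is therefore slightly misplaced: the delicate nonlocal bookkeeping lives in the logarithmic measure–shrinking step, and it is resolved there not via Lemma~\ref{lem-2-3} but by noting that the symmetric part of the nonlocal integral against the chosen test function has a sign and the remainder is controlled by $|u_k|\le k$. Finally, your cube calibration is off by one scale: the paper runs Steps~1–2 in $K_{4\rho}$ (starting from $|[u\ge M]\cap K_{4\rho}|\ge 4^{-N}\alpha|K_{4\rho}|$) and only drops to $K_{2\rho}$ when applying Lemma~\ref{lem-6-2}; as written, your Step~2 in $K_{2\rho}$ cannot use the Step~1 measure information, which lives in $K_{4\rho}$.
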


Before proving Proposition \ref{pro-6-1}, we give several preparatory estimates. The first one is a De Giorgi-type lemma.

\begin{lemma}
\label{lem-6-2}
Let $q>0$, $p>1$, and let $u\in L^\infty(\mathbb{R}^N\times(0,T))$ be a nonnegative, weak supersolution to \eqref{1.1}. For some constants $M>0$ and $\delta\in(0,1)$, assume that $\rho\in(0,1]$ and $(x_0,t_0)+Q_\rho(\theta)\subset E_T$, where $\theta=\delta M^{q+1-p}$. If there is a constant $\nu\in(0,1)$ only depending on $N, p, s, q, \Lambda$ and $\delta$ such that
\begin{align*}
\left|[u \leq M] \cap(x_0, t_0)+Q_{\rho}(\theta)\right| \leq \nu|Q_{\rho}(\theta)|,
\end{align*}
then we  have
\begin{align*}
u \geq \frac{1}{2} M  \quad \text {a.e. in }(x_0, t_0)+Q_{\frac{1}{2} \rho}(\theta).
\end{align*}
\end{lemma}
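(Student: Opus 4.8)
The plan is to run a De Giorgi iteration on intrinsic cylinders, built on the energy (Caccioppoli) estimate for \emph{supersolutions}: the analogue of Lemma~\ref{lem-4-1} with $(u-k)_+$ replaced by $(u-k)_-$, obtained by testing \eqref{1.7} (in its ``$\geq$'' form) with the admissible nonnegative function $\varphi=(u-k)_-\psi^p$. The decisive structural feature is that, since $u\geq 0$ and all levels $k$ used below satisfy $0<k\leq M$, one has $(u-k)_-\leq k\leq M$ pointwise; hence the nonlocal tail term appearing in the Caccioppoli inequality is bounded by $\gamma M^{p-1}\bigl((1-\sigma)\rho\bigr)^{-sp}$ and behaves like a harmless local term, in sharp contrast with the subsolution situation where $\mathrm{Tail}_\infty(u;\cdot)$ genuinely shows up.

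\textbf{Reduction to a normalized problem.} Since $q\neq p-1$ the equation is not invariant under $u\mapsto\lambda u$, so I would first eliminate the dependence on $M$ and $\rho$ through the intrinsic rescaling
\[
w(\xi,\tau):=\frac1M\,u\bigl(x_0+\rho\xi,\ t_0+\theta\rho^p\tau\bigr),\qquad \theta=\delta M^{q+1-p}.
\]
A direct computation shows that this is exactly the scaling for which $w$ becomes a nonnegative weak supersolution on $K_1\times(-1,0]$ of an equation of the same type, now carrying the constant coefficients $\delta$ and $\delta\rho^{(1-s)p}\leq\delta$ in front of the local and nonlocal diffusion (here $\rho\in(0,1]$ is used), with rescaled kernel $\widetilde K(\xi,\eta,\tau)=\rho^{N+sp}K(x_0+\rho\xi,x_0+\rho\eta,t_0+\theta\rho^p\tau)$ still obeying the two-sided bound with the same $\Lambda$, and with $w\leq\|u\|_{L^\infty}/M$ globally. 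The hypothesis becomes $\bigl|[w\leq1]\cap K_1\times(-1,0]\bigr|\leq\nu\,|K_1\times(-1,0]|$, and the claim becomes $w\geq\tfrac12$ a.e. in $K_{1/2}\times(-2^{-p},0]$; translating back recovers exactly $u\geq\tfrac12 M$ on $(x_0,t_0)+Q_{\frac12\rho}(\theta)$. It therefore suffices to prove a De Giorgi lemma for the normalized problem, where all constants may depend on $N,p,s,q,\Lambda,\delta$.

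\textbf{The iteration.} For the normalized problem, take nested cylinders $Q_j:=K_{\rho_j}\times(-s_j,0]$ with $\rho_j\downarrow\tfrac12$ and $s_j\downarrow 2^{-p}$, decreasing levels $k_j:=\tfrac12+2^{-j-1}\downarrow\tfrac12$, and cutoffs $\psi_j$ supported in $Q_j$, equal to $1$ on $Q_{j+1}$, with $|\nabla\psi_j|\leq\gamma 2^j$ and $|\partial_t\psi_j|\leq\gamma 2^j$. Inserting $\psi_j$ into the supersolution Caccioppoli inequality and invoking Lemma~\ref{lem-2-2} (which, because every $k_j\in[\tfrac12,1]$, gives $\bm{h}_-(w,k_j)\simeq(w-k_j)_-^2$ with constants depending only on $q$), I bound every right-hand term by $\gamma b^j\bigl|[w<k_j]\cap Q_j\bigr|$ using $(w-k_j)_-\leq1$. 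The two surviving left-hand terms — $\operatorname*{ess\,sup}_\tau\int(w-k_{j+1})_-^2\psi_j^p$ and $\iint|\nabla(w-k_{j+1})_-|^p\psi_j^p$ — are then combined through the parabolic Sobolev embedding Lemma~\ref{lem-2-7} with $m=2$, producing a bound for $\iint_{Q_{j+1}}(w-k_{j+1})_-^{p(N+2)/N}$. A Chebyshev estimate on $[w<k_{j+2}]\subset Q_{j+1}$, where $(w-k_{j+1})_-\geq k_{j+1}-k_{j+2}=2^{-j-3}$, turns this into the recursion
\[
Y_{j+1}\leq\gamma\,b^{\,j}\,Y_j^{\,1+\frac pN},\qquad Y_j:=\frac{\bigl|[w<k_j]\cap Q_j\bigr|}{|Q_j|},
\]
with $b>1$ and $\gamma$ depending only on $N,p,s,q,\Lambda,\delta$. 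Lemma~\ref{lem-2-4} gives $Y_j\to0$ as soon as $Y_0\leq\gamma^{-N/p}b^{-(N/p)^2}=:\nu$; since $Y_0\leq\nu$ is precisely the hypothesis, we get $\bigl|[w<\tfrac12]\cap K_{1/2}\times(-2^{-p},0]\bigr|=0$, i.e.\ $w\geq\tfrac12$ a.e.\ there.

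\textbf{Main obstacle.} The iteration itself is standard; the two delicate points are (i) writing out the supersolution Caccioppoli inequality and verifying that its nonlocal tail contribution is absorbed — this rests entirely on the bound $(w-k)_-\leq k\leq1$ and would fail for truncations from above — and (ii) the homogeneity bookkeeping, i.e.\ checking that $\theta=\delta M^{q+1-p}$ is exactly the intrinsic time scale turning \eqref{1.1} into an $M$-free equation with merely $\delta$-dependent coefficients, so that the threshold $\nu$ depends only on $N,p,s,q,\Lambda,\delta$. I expect (ii) to absorb most of the work.
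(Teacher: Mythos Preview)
Your proposal is correct and follows essentially the same De Giorgi iteration as the paper: Caccioppoli for $(u-k)_-$, the key observation that $(u-k)_-\le k\le M$ kills the nonlocal tail, the equivalence $\bm h_-(u,k_j)\simeq M^{q-1}(u-k_j)_-^2$ from Lemma~\ref{lem-2-2}, parabolic Sobolev with $m=2$, and Lemma~\ref{lem-2-4}. The only cosmetic difference is that you front-load the homogeneity bookkeeping via the intrinsic rescaling $w=u/M$, whereas the paper works directly with $u$ and carries the powers of $M,\rho,\theta$ through the iteration (obtaining the recursion exponent $1+\tfrac{1}{N+2}$ via H\"older-then-Sobolev rather than your $1+\tfrac{p}{N}$ via Sobolev-then-Chebyshev); both routes land on the same $\nu=\nu(N,p,s,q,\Lambda,\delta)$.
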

\begin{proof}
Let $(x_0,t_0)=(0,0)$. Take decreasing sequences
\begin{align*}
k_j=\frac{M}{2}+\frac{M}{2^{j+1}}, \ \ j=0,1,2\ldots.
\end{align*}
Denote
\begin{align*}
\rho_j=\frac{\rho}{2}+\frac{\rho}{2^{j+1}}, \quad \widetilde{\rho}_j=\frac{\rho_j+\rho_{j+1}}{2}, \quad  \widehat{\rho}_j=\frac{3\rho_j+\rho_{j+1}}{4}, \ \ j=0,1,2\ldots.
\end{align*}
Set the domains
\begin{align*}
K_j=K_{\rho_j}, \quad \widetilde{K}_j=K_{\widetilde{\rho}_j}, \quad \widehat{K}_j=K_{\widehat{\rho}_j}, \ \ j=0,1,2\ldots,
\end{align*}
and
\begin{align*}
Q_j=K_j \times\left(-\theta \rho_j^p, 0\right], \quad \widetilde{Q}_j=\widetilde{K}_j \times\left(-\theta \widetilde{\rho}_j^p, 0\right],\quad
\widehat{Q}_j=\widehat{K}_j \times\left(-\theta \widehat{\rho}_j^p, 0\right], \ \ j=0,1,2\ldots.
\end{align*}
Consider the cutoff function $0\leq\zeta\leq 1$ in $Q_j$ vanishing outside $\widehat{Q}_j$, and equals to $1$ in $\widetilde{Q}_j$, such that
\begin{align*}
|\nabla\zeta|\leq\frac{2^{j+4}}{\rho} \quad \text { and } \quad |\partial_t \zeta| \leq \frac{2^{p(j+4)}}{\theta \rho^p} .
\end{align*}
An application of the energy estimate \eqref{4.1} and Lemma \ref{lem-2-2} in this setting leads to
\begin{align}
\label{6.2}
& \operatorname*{ess\sup}_{-\theta \rho_j^p<t<0} \int_{K_j} \zeta^p(u+k_j)^{q-1}(u-k_j)_{-}^2\,dx+\iint_{Q_j} \zeta^p|\nabla(u-k_j)_-|^p\,dxdt \nonumber\\
\leq&\gamma \iint_{Q_j}(u+k_j)^{q-1}(u-k_j)_-^2|\partial_t \zeta|\,dxdt+\gamma \iint_{Q_j}(u-k_j)_-^p|\nabla \zeta|^p\,dxdt\nonumber\\
&+\gamma\int_{-\theta\rho_j^p}^{0} \int_{K_j} \int_{K_j} \max \{(u-k_j)_-(x, t),(u-k_j)_-(y,t)\}^p|\zeta(x,t)-\zeta(y,t)|^p\,d\mu dt \nonumber\\
&+\gamma \mathop{\mathrm{ess}\,\sup}_{\stackrel{-\theta\rho_j^p<t<0}{x\in \mathrm{supp}\,\zeta(\cdot,t)}}\int_{\mathbb{R}^N \backslash K_j} \frac{(u-k_j)_-^{p-1}(y,t)}{|x-y|^{N+sp}}\,dy\iint_{Q_j}(u-k_j)_-\zeta^p(x,t)\,dxdt\nonumber\\
 =:& G_1+G_2+G_3+G_4.
\end{align}

For the first term $G_1$, observe that when $u<k_j$, there holds
\begin{align}
\label{6.3}
\frac{1}{2}M\leq k_j\leq u+k_j\leq 2M.
\end{align}
Thus we have
\begin{align*}
G_1 &\leq \gamma \frac{2^{pj}}{\theta \rho^p} \iint_{Q_j}(u+k_j)^{q-1}(u-k_j)_{-}^2\,dxdt\\
&\leq \gamma \frac{2^{pj}}{\theta \rho^p} M^{q+1}|A_j|,
\end{align*}
where $A_j=[u<k_j]\cap Q_j$. With the properties of function $\zeta$, we get the estimates of $G_2$ and $G_3$ as below
\begin{align*}
G_2,G_3\leq\gamma\frac{2^{pj}}{\rho^p} M^p|A_j|.
\end{align*}
Besides, we apparently see
\begin{align*}
\frac{|y|}{|y-x|}\leq 1+\frac{|x|}{|y-x|}\leq 1+\frac{\widehat{\rho}_j}{\rho_j-\widehat{\rho}_j}\leq 2^{j+4}
\end{align*}
for any $|x|\leq\widehat{\rho}_j$ and  $|y|\geq\rho_j$. Hence, it holds that
\begin{align*}
G_4&\leq\gamma 2^{(j+4)(N+sp)} M^p|A_j|\left(\frac{\gamma 2^{spj}}{\rho^{sp}}+\operatorname*{ess \sup}_{-\theta\rho_j^{sp}<t<0}\int_{\mathbb{R}^N\backslash K_\rho}\frac{1}{|y|^{N+sp}}\,dy\right)\\ 
&\leq \gamma 2^{(j+4)(N+sp)}\frac{M^p}{\rho^p}|A_j|.
\end{align*}

Now, let us turn to estimate the left-hand side of \eqref{6.2}. By using \eqref{6.3} one can get
\begin{align*}
 \operatorname*{ess\sup}_{-\theta \rho_j^p<t<0} \int_{K_j} \zeta^p(u+k_j)^{q-1}(u-k_j)_{-}^2\,dx\geq\frac{M^{q-1}}{2^{|q-1|}}\operatorname*{ess\sup}_{-\theta \rho_j^p<t<0} \int_{K_j}\zeta^p(u-k_j)_-^2\,dx.
\end{align*}
We conclude from the above estimates that
\begin{align}
\label{6.4}
&\frac{M^{q-1}}{2^{|q-1|}}\operatorname*{ess\sup}_{-\theta \widetilde{\rho}_j^p<t<0} \int_{\widetilde{K}_j}(u-k_j)_-^2\,dx+\iint_{\widetilde{Q}_j}|\nabla(u-k_j)_-|^{p}\,dxdt\nonumber\\
\leq& \gamma \frac{2^{(N+p)j}}{\rho^p} M^p\left(1+\frac{M^{q+1-p}}{\theta}\right)|A_j|. 
\end{align}
H\"{o}lder's inequality in conjunction with Sobolev embedding Lemma \ref{lem-2-7} indicates that
\begin{align}
\label{6.5}
\frac{M}{2^{j+4}}& |A_{j+1}| \leq \iint_{\widetilde{Q}_j}(u-k_j)_{-} \,dxdt\nonumber\\
\leq&\left(\iint_{\widetilde{Q}_j}(u-k_j)_{-}^{p \frac{N+2}{N}}\,dxdt\right)^{\frac{N}{p(N+2)}}|A_j|^{1-\frac{N}{p(N+2)}}\nonumber \\
\leq& \gamma\left(\iint_{\widetilde{Q}_j}\left|\nabla(u-k_j)_{-} \right|^p\,dxdt\right)^{\frac{N}{p(N+2)}}\left(\operatorname*{ess \sup}_{-\theta \widetilde{\rho}_j^p<t<0} \int_{\widetilde{K}_j}(u-k_j)_{-}^2\,dx\right)^{\frac{1}{N+2}}|A_j|^{1-\frac{N}{p(N+2)}}\nonumber \\
\leq &\gamma\left[\frac{2^{(N+p) j}}{\rho^p} M^p\left(1+\frac{M^{q+1-p}}{\theta}\right)\right]^{\frac{N+p}{p(N+2)}}\left(\frac{2^{|q-1|}}{M^{q-1}}\right)^{\frac{1}{N+2}}|A_j|^{1+\frac{1}{N+2}},
\end{align}
where we used \eqref{6.4} in the last step. Denote $Y_j=|A_j|/|Q_j|$, we infer from \eqref{6.5} that
\begin{align*}
Y_{j+1} \leq \gamma b^j\left(1+\frac{M^{q+1-p}}{\theta}\right)^{\frac{N+p}{p(N+2)}}\left(\frac{\theta}{M^{q+1-p}}\right)^{\frac{1}{N+2}} Y_j^{1+\frac{1}{N+2}},
\end{align*}
where $b=b(N,s,p)>1$, and $\gamma> 0$ depends only on $N,p,s,q,\Lambda$. With the help of Lemma \ref{lem-2-4}, we get $Y_j\rightarrow 0$ as $j\rightarrow\infty$ if $Y_0\leq \nu$. Recalling the definition of $\theta$, we find $\nu$ only depends on $N,p,s,q,\Lambda$ and $\delta$.
\end{proof}

The following result is about extending the measure information of positivity forward in time direction.

\begin{lemma}
	\label{lem-6-3}
Let $q>0,p>1$ and $\rho\in(0,1]$. Suppose that $u$ is a nonnegative weak solution to \eqref{1.1}. If for some $M>0$ and $\alpha\in(0,1)$ there holds
\begin{align*}
|[u(\cdot, t_0)\geq M] \cap K_{\rho}(x_0)| \geq \alpha|K_{\rho}|,
\end{align*}
then there exist constants $\delta$ and $\varepsilon$ in $(0,1)$ depending on $N,p,s,q,\Lambda$ and $\alpha$ such that
\begin{align*}
|[u(\cdot, t)] \geq \varepsilon M] \cap K_{\rho}(x_0)| \geq \frac{1}{2} \alpha|K_{\rho}| \quad \text { for all } t \in\left(t_0, t_0+\delta M^{q+1-p} \rho^p\right],
\end{align*}
provided 
\begin{align*}
Q:=K_{\rho} \times(t_0,t_0+\delta M^{q+1-p} \rho^p]\subset E_T.
\end{align*}
\end{lemma}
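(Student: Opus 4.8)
The plan is to carry out the standard DiBenedetto ``expansion of positivity in time'' argument, with the doubly nonlinear non-homogeneity absorbed by the auxiliary function $\bm{h}_-$. Fix an arbitrary $t_1\in(t_0,t_0+\delta M^{q+1-p}\rho^p]$ and apply the energy estimate \eqref{4.1}, in its supersolution form (with $(u-k)_-$ and $\bm{h}_-$) as already used in the proof of Lemma~\ref{lem-6-2}, on the forward cylinder $K_\rho(x_0)\times(t_0,t_1]$ at level $k=M$, with a purely spatial cutoff $\psi=\psi(x)\in C_0^\infty(K_\rho(x_0))$ satisfying $0\le\psi\le1$, $\psi\equiv1$ on $K_{(1-\sigma)\rho}(x_0)$, $\operatorname{supp}\psi\subset K_{(1-\sigma/2)\rho}(x_0)$ and $|\nabla\psi|\le\gamma/(\sigma\rho)$, where $\sigma\in(0,1)$ will be chosen later. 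Because $\partial_t\psi\equiv0$, the term $\iint\bm{h}_-(u,M)|\partial_t\psi|$ vanishes, so the only boundary‑in‑time contribution on the right‑hand side is $\int_{K_\rho(x_0)\times\{t_0\}}\bm{h}_-(u,M)\psi^p\,dx$, and this is where the measure hypothesis enters.

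I would then estimate the two relevant quantities. For the initial term, a direct computation gives $\bm{h}_-(w,M)=q\int_w^M s^{q-1}(M-s)\,ds$ for $0\le w\le M$ and $\bm{h}_-(w,M)=0$ for $w\ge M$; hence $w\mapsto\bm{h}_-(w,M)$ is nonincreasing with maximum $\bm{h}_-(0,M)=M^{q+1}/(q+1)$, and since $|[u(\cdot,t_0)\ge M]\cap K_\rho(x_0)|\ge\alpha|K_\rho|$ we obtain
\[
\int_{K_\rho(x_0)\times\{t_0\}}\bm{h}_-(u,M)\psi^p\,dx\le\frac{1-\alpha}{q+1}\,M^{q+1}|K_\rho|.
\]
For the remaining right‑hand terms of \eqref{4.1} I would use that $u\ge0$ forces $(u-M)_-\le M$ pointwise; together with $\rho\in(0,1]$, $t_1-t_0\le\delta M^{q+1-p}\rho^p$, and the elementary fact that $|x-y|\ge\tfrac12\sigma\rho$ whenever $x\in\operatorname{supp}\psi$ and $y\notin K_\rho(x_0)$ (needed for the tail term), each of the local gradient term and the two nonlocal terms is bounded by $\gamma\,\delta\,\sigma^{-(N+p)}M^{q+1}|K_\rho|$. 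Dropping the nonnegative Dirichlet term on the left and using $\psi\equiv1$ on $K_{(1-\sigma)\rho}$, these estimates combine to
\[
\operatorname*{ess\,sup}_{t_0<t\le t_1}\int_{K_{(1-\sigma)\rho}(x_0)\times\{t\}}\bm{h}_-(u,M)\,dx\le\Big(\tfrac{1-\alpha}{q+1}+\gamma\delta\sigma^{-(N+p)}\Big)M^{q+1}|K_\rho|.
\]

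To extract the conclusion I would detect the set where $u(\cdot,t_1)$ is small. By monotonicity of $\bm{h}_-(\cdot,M)$, on $[u(\cdot,t_1)<\varepsilon M]$ one has $\bm{h}_-(u(\cdot,t_1),M)\ge\bm{h}_-(\varepsilon M,M)=\tfrac{M^{q+1}}{q+1}\big(1-(q+1)\varepsilon^q+q\varepsilon^{q+1}\big)$, where the factor in parentheses is positive and decreasing on $(0,1)$ and tends to $1$ as $\varepsilon\to0$. Evaluating the last display at $t=t_1$ gives
\[
\big|[u(\cdot,t_1)<\varepsilon M]\cap K_{(1-\sigma)\rho}(x_0)\big|\le\frac{1-\alpha+\gamma(q+1)\delta\sigma^{-(N+p)}}{1-(q+1)\varepsilon^q+q\varepsilon^{q+1}}\,|K_\rho|,
\]
whence
\[
\big|[u(\cdot,t_1)\ge\varepsilon M]\cap K_\rho(x_0)\big|\ge\Big((1-\sigma)^N-\frac{1-\alpha+\gamma(q+1)\delta\sigma^{-(N+p)}}{1-(q+1)\varepsilon^q+q\varepsilon^{q+1}}\Big)|K_\rho|.
\]
I would then fix the parameters in order: first choose $\sigma=\sigma(N,\alpha)\in(0,1)$ with $(1-\sigma)^N\ge1-\tfrac\alpha8$; then choose $\varepsilon=\varepsilon(q,\alpha)\in(0,1)$ so small that $\big(1-(q+1)\varepsilon^q+q\varepsilon^{q+1}\big)^{-1}(1-\alpha)\le(1-\alpha)+\tfrac\alpha8$; and finally choose $\delta=\delta(N,p,s,q,\Lambda,\alpha)\in(0,1)$ so small that the $\delta$‑dependent contribution does not exceed $\tfrac\alpha8$. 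This forces the right‑hand side above to be at least $\tfrac\alpha2|K_\rho|$, and since $t_1$ was arbitrary in $(t_0,t_0+\delta M^{q+1-p}\rho^p]$ the lemma follows.

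The main obstacle I anticipate is the accounting for the nonlocal terms: one must verify that, owing to $\rho\le1$ and the intrinsic choice of the time scale $\theta=\delta M^{q+1-p}$, every error term genuinely carries the factor $\delta M^{q+1}|K_\rho|$, so that it can be rendered negligible once $\sigma$ has been frozen. The doubly nonlinear structure is otherwise harmless here, provided one works with the exact expression for $\bm{h}_-$ rather than the two‑sided bound of Lemma~\ref{lem-2-2}, which loses a multiplicative constant and would prevent the quantitative measure inequality from closing.
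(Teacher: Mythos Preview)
Your proposal is correct and follows essentially the same route as the paper: a purely spatial cutoff in the supersolution form of the Caccioppoli estimate at level $k=M$, yielding the inequality
\[
\int_{K_\rho\times\{t\}}\bm{h}_-(u,M)\zeta^p\,dx\le\int_{K_\rho\times\{0\}}\bm{h}_-(u,M)\zeta^p\,dx+\gamma\frac{\delta M^{q+1}}{\sigma^{N+p}}|K_\rho|,
\]
after which the paper simply says ``the rest of the proof is standard'' and cites \cite{BDG23}. You have in fact supplied those standard details---the exact computation of $\bm{h}_-(\varepsilon M,M)$ and the ordered choice of $\sigma,\varepsilon,\delta$---more explicitly than the paper does, and your handling of the nonlocal error terms (via $(u-M)_-\le M$, $\rho\le1$, and the separation $|x-y|\ge\tfrac12\sigma\rho$ for the tail) is exactly what is needed.
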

\begin{proof}
Let $(x_0,t_0)=(0,0)$. Employing the energy estimate \eqref{4.1} in $Q$ with $k=M$. Choose the test function $\zeta(x,t)=\zeta(x)$ equals to $1$ in $K_{(1-\sigma)\rho}$, vanishing outside 
$K_{\frac{2-\sigma}{2}\rho}$ such that $|\nabla\zeta|\leq (\sigma\rho)^{-1}$,
where $\sigma\in(0,1)$ will be chosen later. Then we have for any $0<t\leq \delta M^{q+1-p}\rho^p$ that
\begin{align*}
&\int_{K_{\rho}\times\{t\}}\int_u^M s^{q-1}(s-M)_-\,ds \zeta^p\,dx\\
\leq& \int_{K_{\rho} \times\{0\}} \int_u^M s^{q-1}(s-M)_-\,ds \zeta^p\,dx +\gamma \iint_Q(u-M)_-^p|\nabla \zeta|^p\,dxdt\\
&+\int_{0}^{\delta M^{q+1-p}\rho^p}\!\!\int_{K_\rho}\int_{K_\rho}\max \{(u-M)_-(x, t),(u-M)_-(y,t)\}^p|\zeta(x,t)-\zeta(y,t)|^p\,d\mu dt \nonumber\\
&+\gamma \mathop{\mathrm{ess}\,\sup}_{\stackrel{0<t<\delta M^{q+1-p}\rho^p}{x\in \mathrm{supp}\,\zeta(\cdot,t)}}\int_{\mathbb{R}^N \backslash K_\rho} \frac{(u-M)_-^{p-1}(y,t)}{|x-y|^{N+sp}}\,dy\iint_Q(u-M)_-\zeta^p(x,t)\,dxdt\nonumber\\
\leq &\int_{K_{\rho} \times\{0\}} \int_u^M s^{q-1}(s-M)_-\,ds \zeta^p\,dx+\gamma\frac{\delta M^{q+1}}{\sigma^{N+p}}|K_\rho|,
\end{align*}
where $\gamma$ depends only on $N, p, s, q, \Lambda$. The rest of the proof is standard, and we refer the readers to Lemma 6.2 in \cite{BDG23} for details.
\end{proof}

Next, we introduce a measure shrinking lemma.

\begin{lemma}
\label{lem-6-4}
Let $0<p-1\leq q$, constants $\delta, \alpha\in(0,1)$ and $M>0$. Suppose that $u$ is a nonnegative, weak supersolution to \eqref{1.1}. Let $\rho \in(0,1)$ and $K_{2 \rho}(x_0) \times(t_0, t_0+\delta M^{q+1-p} \rho^p] \subset E_T$. For any $\nu\in(0,1)$, if 
\begin{align}
\label{6.6}
|[u(\cdot, t) \geq M] \cap K_{\rho}(x_0)| \geq \alpha|K_{\rho}| \quad \text { for all } t \in\left(t_0, t_0+\delta M^{q+1-p} \rho^p\right],
\end{align}
then there exists $\xi \in(0,1)$ depending on $N, p, s, q, \Lambda, \delta, \nu$ and $\alpha$ such that
\begin{align*}
|[u(\cdot, t) \leq \xi M] \cap K_{\rho}(x_o)| \leq \nu|K_{\rho}|
\end{align*}
for all time
\begin{align*}
t \in\left(t_0+\tfrac{1}{2} \delta M^{q+1-p} \rho^p, t_0+\delta M^{q+1-p} \rho^p\right].
\end{align*}
\end{lemma}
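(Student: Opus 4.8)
After translating so that $(x_0,t_0)=(0,0)$ and writing $\theta=\delta M^{q+1-p}$, I would establish this by a quantitative De~Giorgi measure–shrinking argument over a sequence of truncation levels decreasing to $\xi M$. The two structural inputs are: the Caccioppoli inequality of Lemma~\ref{lem-4-1}, applied only at truncation levels $k\le M$, so that $(u-k)_-\le k\le M$ pointwise — which is exactly what keeps the parabolic nonlocal tail finite and quantitatively controlled; and the hypothesis \eqref{6.6}, which I would use in the (equivalent) form: for every $k\le M$ and every $t\in(0,\theta\rho^p]$ the set $\{(u(\cdot,t)-k)_-=0\}=\{u(\cdot,t)\ge k\}\supseteq\{u(\cdot,t)\ge M\}$ has measure at least $\alpha|K_\rho|$. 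This is precisely the non-degeneracy needed to run the Poincar\'e inequality of Lemma~\ref{lem-2-6} — with $\varphi$ a cube cutoff, whose superlevel sets are convex — slice by slice in time.

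Concretely, I would fix levels $k_j\downarrow\xi M$ and cylinders $Q_j\subset K_{2\rho}\times(0,\theta\rho^p]$ whose time–sections shrink to $(\tfrac12\theta\rho^p,\theta\rho^p]$, set $A_j:=\{u<k_j\}\cap Q_j$, and apply Lemma~\ref{lem-4-1} at level $k_j$ on $Q_j$. The time term is handled through $\bm{h}_-$ and Lemma~\ref{lem-2-2}: since $k_j\le M$ and $\theta=\delta M^{q+1-p}$, the condition $q\ge p-1$ lets one absorb all powers of $M$ and arrive at a right-hand side of order $\delta^{-1}\rho^{-p}M^{p}\,b^{\,j}|A_j|$; the nonlocal tail term is estimated exactly as in the proofs of Lemmas~\ref{lem-5-3} and \ref{lem-6-3}, using $(u-k_j)_-\le M$, $\rho\le1$ and the decay of the kernel away from $K_{\rho_j}$, to give a comparable contribution. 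Combining this energy bound with the pointwise-in-time Poincar\'e inequality from Lemma~\ref{lem-2-6} (legitimate because $\{u(\cdot,t)\ge k_j\}$ has measure $\ge\alpha|K_\rho|$ for every $t$) and then with the parabolic Sobolev embedding of Lemma~\ref{lem-2-7} with exponent $m=2$, I expect a recursion of the form
\[
Y_{j+1}\ \le\ \frac{\gamma}{\delta\,\alpha^{\,\beta}}\,b^{\,j}\,Y_j^{\,1+p/N},\qquad Y_j:=\frac{|A_j|}{|Q_j|},
\]
where, after normalising $M$ to $1$ by the scaling $u\mapsto u/M$, $t\mapsto M^{q+1-p}t$, the constants $\gamma,b,\beta$ depend only on $N,p,s,q,\Lambda$.

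By the fast geometric convergence Lemma~\ref{lem-2-4}, $Y_j\to0$ — whence $u>\xi M$ a.e.\ on the limiting cylinder, which in particular yields the asserted slicewise bound with any prescribed $\nu$ — provided the starting value $Y_0\le 1-\alpha$ lies below the smallness threshold attached to this recursion. For $\alpha$ close to $1$ this is immediate; for a general $\alpha\in(0,1)$ I would precede the iteration by a short coarsening step — a logarithmic energy estimate, or a few applications of the De~Giorgi isoperimetric/Poincar\'e inequality between level $M$ and a fixed intermediate level — lowering $|\{u(\cdot,t)\le\xi_0M\}\cap K_\rho|$ below that threshold uniformly in $t$, and then start the above iteration from level $\xi_0M$ with the reduced datum, stopping after enough steps that $Y_j\le\nu$; this fixes $\xi=\xi(N,p,s,q,\Lambda,\delta,\nu,\alpha)$. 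I expect the genuinely delicate point to be the doubly-nonlinear time term in the non-homogeneous range $q\ge p-1$: the primitive produced by testing $\partial_t(u^q)$ against a function of $u$ degenerates as $u\to0$, so the truncations must be kept bounded away from $0$ and the $M$- and $\delta$-dependencies tracked with care — this is also where the restrictions on $q$ are used. The nonlocal contribution, by contrast, is routine once $(u-k_j)_-\le M$ and $\rho\le1$ are exploited as in the earlier sections.
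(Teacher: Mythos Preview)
Your plan conflates two distinct De~Giorgi mechanisms. The superlinear recursion $Y_{j+1}\le \gamma\,\delta^{-1}\alpha^{-\beta}b^{\,j}Y_j^{1+p/N}$ together with Lemma~\ref{lem-2-4} is the \emph{critical-mass} argument: it converts an already \emph{small} initial measure into a pointwise bound, and this is precisely what drives Lemma~\ref{lem-6-2}. Lemma~\ref{lem-6-4} is a \emph{measure-shrinking} statement: one starts only from $Y_0\le 1$---no smallness whatsoever---and must show that by lowering the level to $\xi M$ one can push the measure below any prescribed $\nu$. A superlinear recursion cannot do this, and you recognise the gap when you invoke a ``coarsening step''. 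But that step, which you leave as ``a logarithmic energy estimate, or a few applications of the De~Giorgi isoperimetric/Poincar\'e inequality'', is not a preliminary: it is the entire content of the lemma. Once it is carried out there is nothing left for your main iteration to do. Also, applying Lemma~\ref{lem-2-6} to $(u-k_j)_-$ itself only yields $\int\zeta^p(u-k_j)_-^p\le \gamma\rho^p\alpha^{-?}\int\zeta^p|\nabla(u-k_j)_-|^p$, which after Caccioppoli gives back a trivial bound (since $(u-k_j)_-\le k_j$) and does not produce a useful gain between successive levels.

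The paper proceeds instead through Lemma~\ref{lem-6-5} and a \emph{logarithmic} test function. One tests the equation for the truncation $u_k=k-(k-u)_+$ against $\zeta^p\,[u_k+ck]^{1-p}$, obtaining for a.e.\ $t$ the differential inequality
\[
\partial_t\!\int_{K_{2\rho}}\zeta^p\,\Phi_k(u)\,dx
+\frac{\alpha^p}{\gamma_*\rho^p}\int_{K_{2\rho}}\zeta^p\,\Psi_k^p(u)\,dx
\ \le\ \frac{\gamma}{\delta\rho^p}\ln\!\Big(\tfrac{1+c}{c}\Big)\,|K_{2\rho}|,
\]
with $\Psi_k(u)=\ln\bigl((1+c)k/((1+c)k-(k-u)_+)\bigr)$; here Lemma~\ref{lem-2-6} is applied to $\Psi_k(u)$, whose zero set is $[u\ge k]$ and which is \emph{large} (of order $\ln(1/c)$) on $[u<ck]$. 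It is this logarithmic gain that produces the \emph{linear} alternative
\[
Y_{j+1}\le\max\{\nu,\ \sigma Y_j\},\qquad
Y_j:=\sup_{t\in I}\frac{1}{|K_{2\rho}|}\int_{K_{2\rho}\times\{t\}}\zeta^p\,\chi_{[u<c^jM]}\,dx,
\]
with $\sigma=\sigma(N,p,s,q,\Lambda,\delta,\alpha,\nu)<1$, valid from any $Y_0\le1$; iterating $j_0$ times with $\sigma^{j_0}\le\nu$ yields the claim with $\xi=c^{j_0}$. Note too that the paper's $Y_j$ is a supremum over time slices, which directly matches the slicewise conclusion; your space--time ratio $|A_j|/|Q_j|$ would need a further argument to recover the bound at every $t$. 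Your remarks on controlling the nonlocal tail via $(u-k_j)_-\le M$ and $\rho\le1$, and on keeping the levels bounded away from $0$ so that $(u+k_j)^{q-1}$ does not degenerate, are correct and are indeed how those terms are handled.
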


At this point, some preparations are needed. Let $(x_0,t_0)=(0,0)$. Set
\begin{align*}
I=(0,\delta M^{q+1-p} \rho^p], \quad \lambda I=\left((1-\lambda) \delta M^{q+1-p} \rho^p, \delta M^{q+1-p} \rho^p\right],
\end{align*}
and
\begin{align*}
Q=K_{2 \rho} \times I, \quad \lambda Q=K_{\lambda 2 \rho} \times \lambda I,
\end{align*}
where $\lambda \in(0,1)$. For some $c\in(0,1)$ that will be selected later, taking the sequence
\begin{align}
\label{6.7}
k_j:=c^j M, \ \ j=0,1,2\ldots.
\end{align}
Denote
\begin{align}
\label{6.8}
Y_j:=\sup_{t \in I} \mint_{K_{2\rho} \times\{t\}} \zeta^p \chi_{[u<k_j]}\,dx.
\end{align}
Here, the cutoff function $\zeta(x,t)=\zeta_1(x)\zeta_2(t)$ is piecewise smooth in $Q$, such that
\begin{align}
\label{6.9}
\left\{\begin{array}{c}
0 \leq \zeta \leq 1  \text { in } Q,  \quad \zeta=1  \text { in } \frac{1}{2} Q, \quad \zeta=0   \text { in } Q \backslash \frac{3}{4} Q, \\
|\nabla \zeta_1| \leq \frac{2}{\rho}, \quad 0 \leq \partial_t \zeta_2 \leq \frac{4}{\delta M^{q+1-p} \rho^p}, \\
\text { the sets }[x \in K_{2 \rho}: \zeta_1(x)>a] \text { are convex for all } a \in(0,1).
\end{array}\right.
\end{align}

In the next step, we give the estimate of $Y_j$ as a crucial tool to prove Lemma \ref{lem-6-4}.

\begin{lemma}
	\label{lem-6-5}
Suppose the conditions in Lemma \ref{lem-6-4} hold. Then there exist $\sigma, c \in(0,1)$ depending on $N,p,s,q,\Lambda,\delta,\alpha$ and $\nu$, such that for every $\nu\in(0,1)$ there holds either
\begin{align*}
Y_j \leq \nu
\end{align*}
or
\begin{align*}
Y_{j+1} \leq \max \left\{\nu, \sigma Y_j\right\} 
\end{align*}
with $j\in\mathbb{N}\cup\{0\}$.
\end{lemma}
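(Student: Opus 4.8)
The plan is to run a De Giorgi iteration over the truncation levels $k_j=c^jM$, in the spirit of the intrinsic-scaling scheme of \cite{D93}, with two analytic inputs: the energy estimate of Lemma~\ref{lem-4-1} for the truncations $(u-k_j)_-$, and the isoperimetric-type inequality of Lemma~\ref{lem-2-6}, in which the density hypothesis \eqref{6.6} supplies, at every time level $t\in I$, a set of measure at least $\alpha|K_\rho|$ on which $(u(\cdot,t)-k_j)_-$ vanishes. I normalize $(x_0,t_0)=(0,0)$. Since $[u<k_{j+1}]\subseteq[u<k_j]$ forces $Y_{j+1}\le Y_j$, the alternative $Y_j\le\nu$ is automatically compatible with the asserted estimate, so it suffices to prove: whenever $Y_j>\nu$, one has $Y_{j+1}\le\nu$ or $Y_{j+1}\le\sigma Y_j$.

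First I would apply the energy estimate \eqref{4.1} to $(u-k_j)_-$ on $Q$ with the cutoff $\zeta=\zeta_1(x)\zeta_2(t)$ of \eqref{6.9}. Since $\zeta(\cdot,0)=0$, the slice term at the initial time drops; Lemma~\ref{lem-2-2} replaces $\bm{h}_-(u,k_j)$ by $(u+k_j)^{q-1}(u-k_j)_-^2$; on $[u<k_j]$ one has $k_j\le u+k_j\le 2k_j$ and $(u-k_j)_-\le k_j$; and the nonlocal tail is absorbed using that $u$ is globally bounded, $(u-k_j)_-^{p-1}\le k_j^{p-1}$, and $\int_{\mathbb{R}^N\setminus K_{2\rho}}|x-y|^{-N-sp}\,dy\le\gamma\rho^{-sp}$ on $\mathrm{supp}\,\zeta$. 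Retaining on the left both $\operatorname*{ess\,sup}_{t\in I}\int_{K_{2\rho}}\zeta^p(u+k_j)^{q-1}(u-k_j)_-^2\,dx$ and $\iint_Q\zeta^p|\nabla(u-k_j)_-|^p\,dxdt$, estimating the cutoff-derivative terms by $|\nabla\zeta|\le 2/\rho$ and $|\partial_t\zeta|\le 4/(\delta M^{q+1-p}\rho^p)$ on the collar $\tfrac34Q\setminus\tfrac12Q$, and using, wherever the weight $\zeta^p$ survives, the elementary bound $\iint_Q\zeta^p\chi_{[u<k_j]}\,dxdt\le|I|\,|K_{2\rho}|\,Y_j$, I expect a right-hand side of order $k_j^{q+1}|K_{2\rho}|\bigl(\delta^{-1}c^{\mu j}+Y_j\bigr)$ for a suitable $\mu>0$. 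In parallel, Lemma~\ref{lem-2-6} applied on the convex cube $K_{3\rho/2}$, with weight $\zeta_1^p$ and $v=\min\{(k_j-u(\cdot,t))_+,(1-c)k_j\}$, whose vanishing set meets $\{\zeta_1=1\}$ in a set of measure $\ge\alpha|K_\rho|$ by \eqref{6.6}, gives for almost every $t\in I$
\[
(1-c)^pk_j^p\int_{K_{2\rho}}\zeta_1^p\chi_{[u(\cdot,t)<k_{j+1}]}\,dx\le\frac{\gamma\rho^p}{\alpha^{p(N-1)/N}}\int_{K_{2\rho}}\zeta_1^p\bigl|\nabla(u-k_j)_-(\cdot,t)\bigr|^p\,dx .
\]

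To close the scheme I would couple the two: on the left, the time-supremum term of the energy estimate, bounded below through Lemma~\ref{lem-2-2} by $(u+k_j)^{q-1}(u-k_j)_-^2\ge\gamma^{-1}(1-c)^2k_j^{q+1}\chi_{[u<k_{j+1}]}$, controls $(1-c)^2k_j^{q+1}|K_{2\rho}|\,Y_{j+1}$; on the right, integrating the isoperimetric inequality over the sub-cylinder where $\zeta_2\equiv1$ and inserting the gradient bound from the energy estimate absorbs the remaining gradient energy. Dividing out the common powers of $k_j$ and $M$ and recalling $|I|=\delta M^{q+1-p}\rho^p$ should leave a recursion of the shape $Y_{j+1}\le\gamma_1 Y_j+\tfrac12\nu$, in which the coefficient $\gamma_1$ can be forced below $\tfrac12$ by a suitable choice of $c$ (with $\delta$ kept as in the hypothesis); the bifurcation remark then finishes, since $\gamma_1Y_j\le\tfrac12\nu$ forces $Y_{j+1}\le\nu$, while $\gamma_1Y_j>\tfrac12\nu$ forces $Y_{j+1}<2\gamma_1Y_j$, so $Y_{j+1}\le\max\{\nu,\sigma Y_j\}$ with $\sigma:=2\gamma_1<1$. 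I expect the main obstacle to be precisely the derivation of this recursion with a contraction coefficient $\gamma_1<\tfrac12$ uniform in $j$: because of the non-homogeneity the levels $k_j=c^jM\to 0$ cannot be scaled out of the equation, so one must keep careful track of how the factors $k_j^p$, $k_j^{q+1}$ and the cutoff powers interact, and in particular ensure that every contribution not carrying the weight $\zeta^p$ is genuinely lower order, namely $\delta^{-1}c^{\mu j}$ with $\mu>0$ — which is where the smallness of $\delta$ and the precise geometry of \eqref{6.9} enter.
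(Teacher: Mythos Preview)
Your approach differs from the paper's, and the discrepancy is not cosmetic: the paper does \emph{not} use the Caccioppoli estimate \eqref{4.1} for $(u-k_j)_-$ at all. It tests the supersolution inequality for the truncation $u_k=k-(k-u)_+$ (Proposition~\ref{pro-3-2}) against the \emph{logarithmic} test function
\[
\frac{\zeta^p}{\bigl[u_k+ck\bigr]^{p-1}},
\]
obtaining a pointwise-in-time differential inequality for $t\mapsto\int_{K_{2\rho}}\zeta^p\Phi_k(u)\,dx$ with a nonnegative dissipation $\int\zeta^p|\nabla\Psi_k(u)|^p$, where $\Psi_k(u)=\ln\frac{k(1+c)}{k(1+c)-(k-u)_+}$. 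The Poincar\'e inequality of Lemma~\ref{lem-2-6} is applied to $\Psi_k$, not to a truncated power of $u$. The point is that on $[u<k_{j+1}]=[u<ck_j]$ one has $\Psi_{k_j}(u)\ge\ln\frac{1+c}{2c}$, so the dissipation term carries an extra factor $(\ln\tfrac{1}{c})^{p}$, while the forcing on the right only grows like $\ln\tfrac{1}{c}$; this mismatch is precisely what manufactures the contraction constant $\sigma<1$ when $c$ is taken small.

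Your scheme, by contrast, cannot close. Trace the scales: the time--supremum term on the left of \eqref{4.1} gives at best $\gamma^{-1}(1-c)^2k_j^{q+1}|K_{2\rho}|\,Y_{j+1}$. On the right, the spatial collar term $\iint(u-k_j)_-^p|\nabla\zeta|^p$ is supported where $\zeta^p$ need not be $1$, hence cannot be bounded by $Y_j$; estimating it crudely by $\gamma\rho^{-p}k_j^p\,|I|\,|K_{2\rho}|=\gamma\delta\,k_j^pM^{q+1-p}|K_{2\rho}|$ and dividing by $k_j^{q+1}|K_{2\rho}|$ leaves the ratio $\gamma\delta\,(M/k_j)^{q+1-p}=\gamma\delta\,c^{-(q+1-p)j}$, which \emph{grows} geometrically because $q+1>p$. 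So the ``lower-order'' contribution you hope is $\delta^{-1}c^{\mu j}$ with $\mu>0$ is in fact $c^{-(q+1-p)j}$, blowing up. The time--collar term $\iint\bm h_-(u,k_j)|\partial_t\zeta|$ likewise produces an $O(1)$ constant (not carrying $\zeta^p$, hence not controlled by $Y_j$). No choice of $c$ or $\delta$ rescues this, and the isoperimetric step does not help either: it only converts the \emph{time-integrated} gradient energy, whereas $Y_{j+1}$ is a supremum in time. The logarithmic test function is the missing ingredient; it simultaneously (i) yields a genuine differential inequality in $t$, and (ii) renders all right-hand terms homogeneous of the same degree in $k$, so that the passage from $k_j$ to $k_{j+1}=ck_j$ costs only a fixed factor $\ln(1/c)$ independent of $j$.
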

\begin{proof}
We prove this lemma starts by showing an integral inequality under the assumption 
\begin{align}
\label{6.10}
\partial_t u^q \in C(I;L^1(K_{2 \rho})),
\end{align}
and drops it later. As a consequence of Proposition \ref{pro-3-2}, $u_k:=k-(k-u)_{+}$ with $k \in(0, M)$ is a nonnegative, weak supersolution to \eqref{1.1} in $Q$, reads that
\begin{align}
\label{6.11}
&\partial_t u_k^q-\operatorname{div} (|\nabla u_k|^{p-2}\nabla u_k)\nonumber\\
& \ \ \ \ \ \ + \mathrm{P.V.}\int_{\mathbb{R}^N}|u_k(x,t)-u_k(y,t)|^{p-2}(u_k(x,t)-u_k(y,t))\,dy\geq 0 \quad \text { weakly in } Q .
\end{align}
Testing \eqref{6.11} with the function
\begin{align}
\label{6.12}
\frac{\zeta^p}{\left[k-(k-u)_{+}+ck\right]^{p-1}},
\end{align}
where $\zeta$ is given in \eqref{6.9}, and constants $c\in(0,1),k\in(0,M)$ will be determined later. Then for a.e. $t\in I$, we have
\begin{align}
\label{6.13}
& \partial_t \int_{K_{2 \rho} \times\{t\}} \zeta^p \Phi_k(u)\,dx+\int_{K_{2\rho} \times\{t\}} \zeta^p\left|\nabla \Psi_k(u)\right|^p\,dx\nonumber\\
\leq&\int_{K_{2 \rho} \times\{t\}}\left|\nabla \Psi_k(u)\right|^{p-1} \zeta^{p-1}|\nabla \zeta|\,dx +\int_{K_{2 \rho} \times\{t\}} \Phi_k(u) \partial_t \zeta^p\,dx \nonumber\\
&+\int_{K_{2 \rho} \times\{t\}}\int_{K_{2\rho}}\frac{U_k(x,y,t)}{|x-y|^{N+sp}} \left(\frac{\zeta^p(x,t)}{[u_k(x,t)+ck]^{p-1}}-\frac{\zeta^p(y,t)}{[u_k(y,t)+ck]^{p-1}}\right)\,dxdy\nonumber\\
&+2\int_{K_{2 \rho} \times\{t\}}\int_{\mathbb{R}^N\backslash K_{2\rho}} \frac{\zeta^p(x,t)U_k(x,y,t)}{[u_k(x,t)+ck]^{p-1}|x-y|^{N+sp}}\,dxdy\nonumber\\
=:&J_1+J_2+J_3+J_4,
\end{align}
where
\begin{align*}
&U_k(x,y,t):=|u_k(x,t)-u_k(y,t)|^{p-2}(u_k(x,t)-u_k(y,t)),\\
& \Phi_k(u):=\int_0^{(k-u)_{+}} \frac{q(k-s)^{q-1}}{(k-s+c k)^{p-1}}\,ds, \\
& \Psi_k(u):=\ln \left(\frac{k(1+c)}{k(1+c)-(k-u)_{+}}\right).
\end{align*}
The existence of the term containing time derivative is ensured by assumption \eqref{6.10}. 

Next, we estimate $J_1$--$J_4$ in \eqref{6.13}, separately. 

\textbf{Estimate of $J_1$:} By Young's inequality, it holds that
\begin{align*}
J_1\leq  \frac{1}{2}\int_{K_{2 \rho} \times\{t\}}\left|\nabla \Psi_k(u)\right|^p \zeta^p\,dx+\gamma(p) \int_{K_{2 \rho}}|\nabla \zeta|^p\,dx.
\end{align*}

\textbf{Estimate of $J_2$:} Since $0<p-1\leq q$, we compute 
\begin{align}
\label{6.14}
\Phi_k(u) &\leq \int_0^k \frac{q(k-s)^{q-1}}{(k-s+c k)^{p-1}}\,ds\nonumber\\
&=k^{q+1-p} \int_0^1 \frac{q s^{q-1}}{(s+c)^{p-1}}\,ds\nonumber\\
&\leq \gamma(p,q) k^{q+1-p} \ln \left(\frac{1+c}{c}\right).
\end{align}
By using \eqref{6.14}, \eqref{6.9} and $k\in(0, M)$, it follows that
\begin{align*}
J_2 \leq \gamma \frac{k^{q+1-p}}{\delta M^{q+1-p} \rho^p} \ln \left(\frac{1+c}{c}\right)|K_{2 \rho}| \leq \frac{4 \gamma(p,q)}{\delta \rho^p} \ln \left(\frac{1+c}{c}\right)|K_{2 \rho}| .
\end{align*}

\textbf{Estimate of $J_3$:} $J_3$ can be estimated in virtue of \eqref{6.9} as
\begin{align*}
J_3\leq&\int_{K_{2 \rho} \times\{t\}}\int_{K_{2\rho}}\frac{|u_k(x,t)-u_k(y,t)|^{p-2}(u_k(x,t)-u_k(y,t))}{|x-y|^{N+sp}}\times \frac{\zeta^p(x,t)-\zeta^p(y,t)}{[u_k(x,t)+ck]^{p-1}}\,dxdy\\
\leq&\gamma(N,p,s,q,\Lambda)\frac{1}{\rho^p}|K_{2\rho}|,
\end{align*}
where we drop the non-positive term
\begin{align*}
\int_{K_{2 \rho} \times\{t\}}\int_{K_{2\rho}}\frac{U_k(x,y,t)}{|x-y|^{N+sp}}\left(\frac{\zeta^p(y,t)}{[u_k(x,t)+ck]^{p-1}}-\frac{\zeta^p(y,t)}{[u_k(y,t)+ck]^{p-1}}\right)\,dxdy.
\end{align*}

\textbf{Estimate of $J_4$:}
Note that 
\begin{align*}
\frac{|y|}{|y-x|}\leq1+\frac{|x|}{|y-x|}\leq 1+\frac{3/2\rho}{1/2\rho}\leq 4
\end{align*}
for any $|x|\leq \frac{3}{2}\rho$  and $|y|\geq 2\rho$. Thus we evaluate
\begin{align*}
J_4&\leq\gamma\int_{\mathbb{R}^N\backslash K_{2\rho}}\frac{1}{|y|^{N+sp}}\,dy\int_{K_{2 \rho} \times\{t\}}\frac{u_k(x,t)^{p-1}\zeta^p(x,t)}{[u_k(x,t)+ck]^{p-1}}\,dx\\
&\leq\gamma(N,p,s,q,\Lambda)\frac{1}{\rho^p}|K_{2\rho}|.
\end{align*}
Inserting estimates $J_1$--$J_4$ into \eqref{6.13}, we derive for a.e. $t\in I$ that
\begin{align}
\label{6.15}
\partial_t \int_{K_{2 \rho} \times\{t\}} \zeta^p \Phi_k(u)\,dx+\int_{K_{2 \rho} \times\{t\}} \zeta^p\left|\nabla \Psi_k(u)\right|^p\,dx \leq \frac{\gamma}{\delta \rho^p} \ln \left(\frac{1+c}{c}\right)\left|K_{2 \rho}\right|
\end{align}
with $\gamma$ depending only on $N, p, s, q, \Lambda$. Here, we can chose $c\in(0,\frac{1}{3})$ such that $\ln \left(\frac{1+c}{c}\right) \geq 1$. From the measure information \eqref{6.6} and $k<M$, we have
\begin{align*}
\left|[\Psi_k(u)=0 ]\cap K_{\rho}\right| \geq \alpha 2^{-N}|K_{2 \rho}| \quad \text { for all } t \in I .
\end{align*}
Employing Poincar\'{e} inequality in Lemma \ref{lem-2-6}, it gives that
\begin{align}
\label{6.16}
\int_{K_{2 \rho} \times\{t\}} \zeta^p \Psi_k^p(u)\,dx \leq \frac{\gamma_* \rho^p}{\alpha^p} \int_{K_{2 \rho} \times\{t\}} \zeta^p\left|\nabla \Psi_k(u)\right|^p\,dx \quad \text { for a.e. } t \in I,
\end{align}
where $\gamma_*$ is the Sobolev constant depending only on $p,N$. Taking \eqref{6.15} and \eqref{6.16} into account, we arrive at
\begin{align}
\label{6.17}
\partial_t \int_{K_{2 \rho} \times\{t\}} \zeta^p \Phi_k(u)\,dx +\frac{\alpha^p}{\gamma_* \rho^p} \int_{K_{2 \rho} \times\{t\}} \zeta^p \Psi_k^p(u)\,dx \leq \frac{\gamma}{\delta \rho^p} \ln \left(\frac{1+c}{c}\right)|K_{2 \rho}|,
\end{align}
which is the desired integral inequality.

Since the integral inequality \eqref{6.17} has been obtained, we can proceed as in  \cite[\S 6.3.1.2-\S 6.3.1.4]{BDG23} to prove there holds $Y_{j+1}\leq \max \left\{\nu, \sigma Y_j\right\}$ under the assumption \eqref{6.10}. Then, we can remove \eqref{6.10} by following the argument in  \cite[\S 6.3.1.5]{BDG23}. Moreover, the convergence of the nonlocal term can be verified by Lemma \ref{lem-2-8} $(\rm{{\romannumeral 1}})$. We omit the proof for short.
\end{proof}
\begin{proof}[\textbf{Proof of Lemma \ref{lem-6-4}}]
Iterating Lemma \ref{lem-6-3} yields that
\begin{align*}
Y_{j_0} \leq \max \left\{\nu, \sigma^{j_0} Y_0\right\} \quad \text{for}\ j_0\in\mathbb{N}.
\end{align*}
From the definition of $Y_j$ in \eqref{6.8}, we know $Y_0\leq 1$. Now, choosing $j_0$ such that $\sigma^{j_0} \leq \nu$, further to obtain $Y_{j_0} \leq \nu$. In view of \eqref{6.7}, \eqref{6.8} and \eqref{6.9}, we have
\begin{align*}
\frac{1}{|K_{2 \rho}|}\left|[u(\cdot, t)\leq c^{j_0} M]  \cap K_{\rho}\right| \leq Y_{j_0} \leq \nu \quad \text { for all } t \in \frac{1}{2} I .
\end{align*}
Let $\xi=c^{j_0}$, we conclude the proof by replacing $\nu$ by $2^{-N} \nu$ and adjusting some constants.
\end{proof}

We now proceeding in proving Proposition \ref{pro-6-1}.
\begin{proof}[\textbf{Proof of Proposition \ref{pro-6-1}}] The measure theoretical hypothesis \eqref{6.1} implies that
\begin{align*}
\left|[u(\cdot,t_0)\geq M] \cap K_{4 \rho}(x_0)\right| \geq 4^{-N} \alpha\left|K_{4 \rho}\right|.
\end{align*}
This along with Lemma \ref{lem-6-3} gives there exist $\delta$ and $\varepsilon$ in $(0,1)$ depending only on $N,p,s,q,\Lambda,\alpha$ such that
\begin{align}
\label{6.20}
\left|[u(\cdot,t)\geq \varepsilon M] \cap K_{4 \rho}(x_0)\right| \geq \frac{1}{2} 4^{-N} \alpha|K_{4 \rho}|
\end{align}
for any
\begin{align*}
t \in(t_0,t_0+\delta M^{q+1-p}(4 \rho)^p].
\end{align*}
With the help of \eqref{6.20}, we can employ Lemma \ref{lem-6-4} to derive that for given $\nu \in(0,1)$, there exists $\xi \in(0,1)$ depending only on $N, p, s, q, \Lambda, \nu, \delta, \alpha$ such that
\begin{align*}
\left|[u(\cdot,t)\leq \xi\varepsilon M] \cap K_{4 \rho}(x_0)\right|
\leq \nu\left|K_{4 \rho}\right|
\end{align*}
for
\begin{align*}
t \in\left(t_0+\tfrac{1}{2} \delta(\varepsilon M)^{q+1-p}(4 \rho)^p, t_0+\delta(\varepsilon M)^{q+1-p}(4 \rho)^p\right].
\end{align*}
Notice for every time
\begin{align*}
\bar{t} \in\left(t_0+\tfrac{3}{4} \delta(\varepsilon M)^{q+1-p}(4 \rho)^p, t_0+\delta(\varepsilon M)^{q+1-p}(4 \rho)^p\right],
\end{align*}
we have
\begin{align*}
\left(x_0, \bar{t}\right)+Q_{4 \rho}(\theta)\subset K_{4 \rho}(x_0) \times\left(t_0+\tfrac{1}{2} \delta(\varepsilon M)^{q+1-p}(4 \rho)^p, t_0+\delta(\varepsilon M)^{q+1-p}(4 \rho)^p\right]
\end{align*}
with $\theta=\frac{1}{4} \delta(\xi \varepsilon M)^{q-1+p}$. Thus, we deduce for any $\bar{t}$ that
\begin{align}
\label{6.21}
\left|[u \leq \xi \varepsilon M]\cap(x_0, \bar{t})+Q_{4 \rho}(\theta)\right| \leq \nu|Q_{4 \rho}(\theta)|.
\end{align}
Thanks to the measure information \eqref{6.21}, it permits us to exploit the De Giorgi-type Lemma \ref{lem-6-2} in $(x_0, \bar{t})+Q_{4 \rho}(\theta))$ with $M$ replaced by $\xi\varepsilon M$, we also choose $\nu$ from Lemma \ref{lem-6-2}, then we deduce that
\begin{align*}
u \geq \frac{1}{2} \xi \varepsilon M \quad \text{ a.e. in}  (x_0, \bar{t})+Q_{2 \rho}(\theta).
\end{align*}
Due to the arbitrariness of $\bar{t}$, we get the desired results.
\end{proof}

\section{Harnack inequality}
\label{sec7}
In this section, we devote to establishing the Harnack inequality in Theorem \ref{thm-1-6}. The first step is going to scale functions.

\subsection{Scaling functions} Change the variables
\begin{align*}
z \rightarrow \frac{x-x_0}{\rho}, \quad z^{\prime} \rightarrow \frac{y-x_0}{\rho}, \quad \tau \rightarrow [u(x_0, t_0)]^{p-q-1} \frac{t-t_0}{\rho^p} .
\end{align*}
Consider the following rescaled function
\begin{align}
\label{7.1}
v(z,\tau):=\frac{u\left(x_0+\rho z, t_0+[u(x_0, t_0)]^{q+1-p} \rho^p \tau\right)}{u(x_0,t_0)} \quad \text { in } \widetilde{Q}_8=K_8 \times(-8^p, 8^p).
\end{align}
Via formula calculation, we can check that $v(0,0)=1$ and $v$ is a bounded continuous nonnegative solution to
\begin{align}
\label{7.2}
&\partial_\tau v^q-\mathrm{div}(|\nabla v|^{p-2} \nabla v)\nonumber\\
&\ \ \ \ \ +\mathrm{P.V.} \int_{\mathbb{R}^N} \widetilde{K}(z,z',\tau)|v(z,\tau)-v(z',\tau)|^{p-2}(v(z,\tau)-v(z',\tau))\,dz'=0 \quad \text { in } \widetilde{Q}_8
\end{align}
with 
\begin{align*}
\widetilde{K}(z,z',\tau)=\rho^{N+p}K(x_0+\rho z,x_0+\rho z', t_0+[u(x_0,t_0)]^{q+1-p}\rho^p \tau),
\end{align*}
satisfying
\begin{align*}
\frac{\rho^{p-sp}\Lambda^{-1}}{|z-z'|^{N+sp}}\leq\widetilde{K}(z,z',\tau)\leq \frac{\rho^{p-sp}\Lambda}{|z-z'|^{N+sp}}.
\end{align*}

Now, our intention turns to demonstrate there exist $\gamma>1$ and $\sigma \in(0,1)$ depending only on $N,p,s,q,\Lambda$ and $\|u\|_{L^\infty(\mathbb{R}^N\times(0,T))}$ such that
\begin{align}
\label{7.3}
v \geq \gamma^{-1} \quad \text { in } K_1 \times(-\sigma, \sigma).
\end{align}
Notice the Harnack inequality on the left-hand side is a straightforward consequence of \eqref{7.3}. 

\subsection{The supremum of function $v$ in $K_1$.} For $\tau\in(0,1)$, define
\begin{align*}
M_\tau:=\sup_{K_\tau} v(\cdot,0), \quad N_\tau:=(1-\tau)^{-\beta}, \quad \text { where } \beta=\frac{p}{q+1-p} .
\end{align*}
Observe that $M_0=N_0=1$, $N_\tau\rightarrow\infty$ as $\tau\rightarrow 1$, and $M_\tau$ is bounded due to $v$ is a bounded function. Therefore, due to $M_\tau=N_\tau$ must have roots, we denote the largest one by $\tau_*$, specifically,
\begin{align*}
M_{\tau_*}=N_{\tau_*} \quad \text { and } \quad M_\tau \leq N_\tau \quad \text { for all } \tau \geq \tau_*.
\end{align*}
Pick $\bar{\tau}\in\left(\tau_*,1\right)$ such that
\begin{align*}
N_{\bar{\tau}}=(1-\bar{\tau})^{-\beta}=4(1-\tau_*)^{-\beta}, \quad \text { i.e., } \quad \bar{\tau}=1-4^{-\frac{1}{\beta}}\left(1-\tau_*\right),
\end{align*}
and let
\begin{align*}
2 r:=\bar{\tau}-\tau_*=(1-4^{-\frac{1}{\beta}})(1-\tau_*).
\end{align*}
Clearly, $M_{\tau_*}$ can be achieved at some $\bar{x} \in K_{\tau_*}$ because $v$ is a continuous function. Hence, there holds $K_{2r}(\bar{x}) \subset K_{\bar{\tau}}, M_{\bar{\tau}} \leq N_{\bar{\tau}}$, and
\begin{align}
\label{7.4}
\sup_{K_{\tau_*}} v(\cdot,0) & =M_{\tau_*}=v(\bar{x}, 0) \leq \sup _{K_{2 r}(\bar{x})} v(\cdot,0) \nonumber\\
& \leq \sup_{K_{\bar{\tau}}} v(\cdot, 0)=M_{\bar{\tau}} \leq N_{\bar{\tau}}=4(1-\tau_*)^{-\beta}.
\end{align}
\subsection{Expanding of positivity of $v$.}
Set the cylinder
\begin{align*}
\widetilde{Q}_r(\theta_*):=K_r(\bar{x}) \times(-\theta_* r^p, \theta_* r^p), \quad \theta_*:=(1-\tau_*)^{-\beta(q+1-p)}
\end{align*}
with center $(\bar{x}, 0)$.
By the choices of $\beta$ and $r$, we compute
\begin{align}
\label{7.5}
\theta_* r^p=(1-\tau_*)^{-p} r^p=2^{-p}\left(1-4^{-\frac{1}{\beta}}\right)^p=:c,
\end{align}
which implies that $$\widetilde{Q}_{2 r}\left(\theta_*\right)=K_{2 r}(\bar{x}) \times\left(-2^p c, 2^p c\right). $$
Obviously,
\begin{align}
\label{7.6}
r=c^{\frac{1}{p}}\left(1-\tau_*\right).
\end{align}

We next discuss the estimate of the supremum of $v$ in $\widetilde{Q}_r\left(\theta_*\right)$.

\begin{lemma}
\label{lem-7-1}
Let $v$ be defined as in \eqref{7.1} and $0<p-1<q<\min\left\{p^2-1,\frac{N(p-1)}{(N-p)_{+}}\right\}$. There exists a constant $\gamma$ depending only on $N, p, s, q, \Lambda,\|u\|_{L^\infty(\mathbb{R}^N\times(0, T))}$ such that 
\begin{align*}
\sup_{\widetilde{Q}_r\left(\theta_*\right)} v \leq \gamma(1-\tau_*)^{-\beta} .
\end{align*}
\end{lemma}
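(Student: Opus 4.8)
The plan is to combine the integral-type Harnack inequality (Theorem \ref{thm-1-5}, applied to the rescaled solution $v$) with the sup-bound \eqref{7.4} of $v(\cdot,0)$ on $K_{2r}(\bar x)$ and the measure-to-pointwise machinery, but the simplest route is a direct appeal to Theorem \ref{thm-1-5} on a cylinder of the form $K_{2r}(\bar x)\times(t_0-s,t_0]$ with a judicious choice of $s$. First I would note that $v$ solves \eqref{7.2}, whose kernel $\widetilde K$ has ellipticity constant $\rho^{p-sp}\Lambda$; since $\rho\in(0,1]$ this only improves the bounds relative to $\Lambda$ (up to renaming constants we may treat $v$ as a solution of the same class of equations, with $\gamma$ now also allowed to depend on $\|u\|_{L^\infty}$ to absorb the tail terms — more on that below). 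The key geometric facts are \eqref{7.5}–\eqref{7.6}: $\theta_* r^p = c$ is a fixed constant depending only on $p,q$, and $r = c^{1/p}(1-\tau_*)$. Hence the cylinder $\widetilde Q_r(\theta_*) = K_r(\bar x)\times(-c,c)$ has time-length comparable to $r^p$ with the intrinsic scaling factor $\theta_*=(1-\tau_*)^{-\beta(q+1-p)}$ already built in, which is exactly the scaling under which the non-homogeneity of \eqref{7.2} is neutralized.

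The central step is to apply Theorem \ref{thm-1-5} (or, more directly, Theorem \ref{thm-1-2} with $r=q$) on the cylinder $Q_{2r,S}(\bar x)$ with $S \simeq \theta_*(2r)^p = 2^p c$, centered so that its top slice sits at time $\tau = c$ (so that $\widetilde Q_r(\theta_*)$ is contained in the lower-left sub-cylinder $Q_{r,c}$). This yields
\[
\sup_{\widetilde Q_r(\theta_*)} v \;\le\; \gamma\Big(\tfrac{(2r)^p}{S}\Big)^{\frac{N}{\lambda_q}}\Big(\inf_{t}\ \mint_{K_{2r}(\bar x)\times\{t\}} v^q\,dx\Big)^{\frac{p}{\lambda_q}} + \gamma\Big(\tfrac{S}{(2r)^p}\Big)^{\frac{1}{q+1-p}} + (\text{tail terms}).
\]
Here $(2r)^p/S \simeq \theta_*^{-1} = (1-\tau_*)^{\beta(q+1-p)} = (1-\tau_*)^{p}$ by the definition of $\beta$, so the second term is $\gamma\,(1-\tau_*)^{p/(q+1-p)} = \gamma(1-\tau_*)^{\beta}$, which is bounded and hence $\le \gamma(1-\tau_*)^{-\beta}$ trivially (since $1-\tau_*<1$). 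For the first term, bound $\inf_t \mint_{K_{2r}} v^q\,dx \le \mint_{K_{2r}(\bar x)\times\{0\}} v^q\,dx \le \big(\sup_{K_{2r}(\bar x)} v(\cdot,0)\big)^q \le \big(4(1-\tau_*)^{-\beta}\big)^q$ by \eqref{7.4}; raising to the power $p/\lambda_q$ and multiplying by $(1-\tau_*)^{Np/\lambda_q}$ gives exponent $-\beta q p/\lambda_q + Np/\lambda_q = p(N-\beta q)/\lambda_q$. Since $\lambda_q = N(p-1-q) + qp$ and $\beta q = qp/(q+1-p)$, one computes $N - \beta q = (N(q+1-p)-qp)/(q+1-p) = -\lambda_q/(q+1-p)$ — wait, sign check: this equals $-\lambda_q/(q+1-p)$ only if $q+1-p>0$, which holds; so the exponent of $(1-\tau_*)$ in the first term works out to $-\beta$, matching the claim. (The arithmetic here is the one routine computation I would actually write out carefully.)

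The main obstacle is the tail terms. $\mathrm{Tail}_\infty(v;\bar x,r;\cdot)$ involves $v$ on all of $\mathbb R^N$, and $v$ is globally bounded by $\|u\|_{L^\infty(\mathbb R^N\times(0,T))}/u(x_0,t_0)$ — but $u(x_0,t_0)$ is not controlled from below, so one cannot bound $v$ globally by a constant depending only on the listed data. The resolution is that the tail appears multiplied by $S/(2r)^p \simeq (1-\tau_*)^p$ and by $\rho^{p-sp}$-type factors: more precisely, in the rescaled equation the nonlocal term carries the factor $\rho^{p-sp}\le 1$, and the contribution of $v$ far from $K_r$ to the tail, after undoing the scaling, is $\rho^{p}\,R^{-sp}$-weighted integrals of $u^{p-1}$ over $\mathbb R^N$, which are bounded by $\|u\|_{L^\infty(\mathbb R^N\times(0,T))}^{p-1}$ times a geometric constant (using $\rho\le 1$ and that the relevant spatial scale $\rho r$ stays bounded). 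Thus each tail term is bounded by a constant depending on $N,p,s,q,\Lambda,\|u\|_{L^\infty}$, times a nonnegative power of $(1-\tau_*)$, hence is $\le \gamma(1-\tau_*)^{-\beta}$. Collecting the three (bounded or correctly-scaled) contributions and absorbing them into a single constant $\gamma=\gamma(N,p,s,q,\Lambda,\|u\|_{L^\infty(\mathbb R^N\times(0,T))})$ completes the proof. I would double-check that the hypothesis $q<\min\{p^2-1,N(p-1)/(N-p)_+\}$ is precisely what guarantees $\lambda_q>0$ (needed for Theorem \ref{thm-1-5}) and $0<\tfrac{q+1-p}{p}<p-1$ (needed for the Lemma \ref{lem-2-3} step inside Theorem \ref{thm-1-5}), so that the cited theorem is applicable to $v$ verbatim.
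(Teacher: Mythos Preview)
Your approach is the paper's: apply Theorem~\ref{thm-1-5} to $v$ on $\widetilde Q_r(\theta_*)\subset \widetilde Q_{2r}(\theta_*)$, bound the integral at time $0$ via \eqref{7.4}, absorb the tail into $\gamma(\|u\|_{L^\infty})$, and verify the exponents. One arithmetic slip to fix: you have inverted $S/(2r)^p$ with $(2r)^p/S$. In fact $S/(2r)^p=\theta_*=(1-\tau_*)^{-p}$, so the second term of Theorem~\ref{thm-1-5} is exactly $(1-\tau_*)^{-\beta}$ (not $(1-\tau_*)^{\beta}$), and the tail terms carry the \emph{large} prefactor $(1-\tau_*)^{-p}$, not a small one. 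The argument still closes because $p/q\le\beta$ and $p(p-N)/\lambda_q\le\beta$; these are precisely the comparisons \eqref{7.8}--\eqref{7.9} in the paper (written there for $v^q$).

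On the tail: the paper does exactly what you first suggest---it bounds $[\mathrm{Tail}_\infty(v;\cdot)]^{p-1}$ by a constant absorbed into $\gamma$ depending on $\|u\|_{L^\infty}$, and moves on. Your more elaborate ``undo the scaling'' computation does not actually cancel the factor $u(x_0,t_0)^{-(p-1)}$ that you (correctly) flag; after changing variables one finds $\rho^{p-sp}[\mathrm{Tail}_\infty(v;\bar x,r)]^{p-1}=[\mathrm{Tail}_\infty(u;\bar y,\rho r)]^{p-1}/u(x_0,t_0)^{p-1}$, so the dependence persists. The paper does not address this point either.
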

\begin{proof}
An application of Theorem \ref{thm-1-5} to $v$ over the cylinder  $\widetilde{Q}_r\left(\theta_*\right) \subset \widetilde{Q}_{2 r}\left(\theta_*\right)$ leads to 
\begin{align}
\label{7.7}
\sup _{\widetilde{Q}_r(\theta_*)} v^q  \leq& \frac{\gamma}{c^{\frac{N}{\lambda}}}\left(\int_{K_{2r}(\bar{x})} v^q(x,0)\,dx\right)^{\frac{p}{\lambda}}+\gamma\left(\frac{c}{r^p}\right)^{\frac{q}{q+1-p}}\nonumber\\
&+\gamma\frac{c}{r^p}\left[\mathrm{Tail}_\infty(u;\bar{x},r;-2^p c,2^p c)\right]^{p-1}
\!+\!\gamma \left(\frac{c}{r^p}\right)^{\frac{p-N}{\lambda}}[\mathrm{Tail}_\infty(u;\bar{x},r;-2^p c,2^p c)]^{\frac{p(p-1)}{\lambda}} \nonumber\\
\leq&\frac{\gamma}{c^{\frac{N}{\lambda}}}\left(\int_{K_{2r}(\bar{x})} v^q(x,0)\,dx\right)^{\frac{p}{\lambda}}+\gamma\left(\frac{c}{r^p}\right)^{\frac{q}{q+1-p}}+\gamma\frac{c}{r^p}+\gamma \left(\frac{c}{r^p}\right)^{\frac{p-N}{\lambda}}\nonumber\\
\leq&\gamma\left[(1-\tau_*)^{-\frac{pq}{q+1-p}}+(1-\tau_*)^{-p}+(1-\tau_*)^\frac{-p(p-N)}{\lambda}\right],
\end{align}
where $\gamma$ depends only on $N,p,s,q,\Lambda,\|u\|_{L^\infty(\mathbb{R}^N\times(0,T))}$. In the last step of \eqref{7.7}, we used \eqref{7.4} and \eqref{7.6}.

By the definitions of $\beta, \lambda$ and the assumption $0<p-1<q$, we compute
\begin{align}
\label{7.8}
-p\geq\frac{-pq}{q+1-p}=-\beta q
\end{align}
and
\begin{align}
\label{7.9}
\frac{-p(p-N)}{\lambda}=\frac{pq(p-N)}{N(q+1-p)-pq}\geq\frac{-pqN}{N(q+1-p)}=-\beta q.
\end{align}
Since $\tau_*\in(0,1)$, the claim follows from \eqref{7.7}--\eqref{7.9}.
\end{proof}

With Lemma \ref{lem-7-1}, we present the following result which provides the condition of the expansion of positivity.

\begin{lemma}
\label{lem-7-2}
Let $v$ be defined as in \eqref{7.1}. Let $p>N$ and $0<p-1<q<p^2-1$. There exist constants $\delta, \bar{c}, \alpha\in(0,1)$ just depending on $N,p,s,q,\Lambda,\|u\|_{L^\infty(\mathbb{R}^N\times(0,T))}$ such that 
\begin{align*}
\left|\left[v(\cdot, t) \geq \bar{c}(1-\tau_*)^{-\beta}\right] \cap K_r(\bar{x})\right| \geq \alpha|K_r| \quad \text { for all } t\in\left[-\delta \theta_*r^p, \delta\theta_*r^p\right].
\end{align*}
\end{lemma}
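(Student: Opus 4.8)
The plan is to exploit the fact that the supremum $M_{\tau_*}=v(\bar{x},0)=(1-\tau_*)^{-\beta}$ is \emph{attained} at $(\bar x,0)$, combine it with the sup-bound $\sup_{\widetilde Q_r(\theta_*)}v\le\gamma(1-\tau_*)^{-\beta}$ from Lemma \ref{lem-7-1}, and argue that a function whose maximum value is comparable to its maximum over a whole parabolic cylinder cannot be small on too large a portion of the cylinder — otherwise continuity (via the local boundedness/De Giorgi machinery) would force the pointwise value at $(\bar x,0)$ to drop below $M_{\tau_*}$. Concretely, I would argue by contradiction: suppose that for \emph{every} choice of $\alpha$, $\bar c$ (to be fixed below) there is a time $t\in[-\delta\theta_* r^p,\delta\theta_* r^p]$ with
\[
\left|\left[v(\cdot,t)\ge \bar c(1-\tau_*)^{-\beta}\right]\cap K_r(\bar x)\right|<\alpha|K_r|,
\]
equivalently $|[v(\cdot,t)\le \bar c(1-\tau_*)^{-\beta}]\cap K_r(\bar x)|>(1-\alpha)|K_r|$.

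First I would normalize: set $w:= v/(1-\tau_*)^{-\beta}$, so $w(\bar x,0)=1$, and by Lemma \ref{lem-7-1} one has $\sup_{\widetilde Q_r(\theta_*)}w\le \gamma_0$ for a structural constant $\gamma_0$. Since $w$ is a nonnegative (super)solution of the rescaled equation \eqref{7.2} on $\widetilde Q_{2r}(\theta_*)$ — note the scaling is exactly the intrinsic one, $\theta_*=(1-\tau_*)^{-\beta(q+1-p)}$ matches the natural time-scaling of $w$ — the De Giorgi-type Lemma \ref{lem-6-2} applies to the function $\gamma_0 - w$ (or directly to $w$ with level $M=$ a small multiple of $1$). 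The heart of the argument: if the "bad" measure condition above held with $\alpha$ small and $\bar c$ small, then on a subcylinder $K_{r/2}(\bar x)\times(-\theta_* (r/2)^p,0]$ (after propagating the time-slice information forward using the energy estimate, as in Lemma \ref{lem-6-3}) the set $[w\le \bar c]$ would occupy more than a $(1-\nu)$-fraction, where $\nu$ is the De Giorgi threshold; Lemma \ref{lem-6-2} then yields $w\le \tfrac12$ on an even smaller cylinder around $(\bar x,0)$, contradicting $w(\bar x,0)=1$ and the continuity of $v$. Choosing $\delta$ to be the constant furnished by Lemma \ref{lem-6-3}/\ref{lem-6-2} and then $\bar c,\alpha$ small enough (depending only on $N,p,s,q,\Lambda$ and $\|u\|_{L^\infty}$ through $\gamma_0$) closes the loop.

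The step I expect to be the main obstacle is the bookkeeping needed to go from a \emph{single-time} smallness statement to a \emph{full-cylinder} smallness statement suitable for Lemma \ref{lem-6-2}: the contradiction hypothesis only gives a bad time slice $t$, and I must propagate "most of $K_r(\bar x)$ has $v$ small" either forward or backward in time over an interval of length $\sim\delta\theta_* r^p$ to fill a parabolic cylinder. This is exactly the content of the forward-in-time expansion Lemma \ref{lem-6-3} (applied here "downward", to the supersolution property of $\gamma_0-v$, or to $v$ itself controlling the super-level set), but one has to be careful that the relevant constants $\delta$ are compatible and that the geometry $K_{2r}(\bar x)\times(-2^pc,2^pc)\subset\widetilde Q_8$ — guaranteed by \eqref{7.5} and $\bar x\in K_{\tau_*}\subset K_1$ — leaves enough room for the cube-doubling in Lemmas \ref{lem-6-3} and \ref{lem-6-2}. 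A secondary technical point is checking that the rescaled kernel $\widetilde K$ still satisfies the structural bounds uniformly (it does, with an extra harmless factor $\rho^{p-sp}\le 1$ since $\rho\in(0,1]$), so all cited lemmas apply to \eqref{7.2} with constants independent of $\rho$; the only place $\|u\|_{L^\infty(\mathbb R^N\times(0,T))}$ enters is through $\gamma_0$ in Lemma \ref{lem-7-1}, hence through $\bar c$ and $\alpha$.
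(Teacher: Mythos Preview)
Your approach has a genuine gap. You propose to argue by contradiction and feed the resulting ``most of $K_r(\bar x)$ has $v$ small at some time $t$'' into the De Giorgi machinery (Lemma~\ref{lem-6-2}), possibly after a time-propagation step via Lemma~\ref{lem-6-3}. Neither step works as written. First, Lemma~\ref{lem-6-2} is a \emph{supersolution} lemma: its hypothesis is that the sub-level set $[u\le M]$ has \emph{small} measure and its conclusion is $u\ge \tfrac12 M$. Your contradiction hypothesis says the sub-level set $[v\le \bar c(1-\tau_*)^{-\beta}]$ has \emph{large} measure; to turn this into ``$v$ is small on a subcylinder'' you would need the dual (subsolution) De Giorgi lemma, which the paper does not state in Section~\ref{sec6}. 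Your suggested workaround of applying Lemma~\ref{lem-6-2} to $\gamma_0-w$ fails because the equation is genuinely doubly nonlinear: the map $u\mapsto c-u$ does \emph{not} take supersolutions to supersolutions (the term $\partial_t|u|^{q-1}u$ does not transform linearly), so $\gamma_0-w$ is not a weak supersolution to anything resembling \eqref{7.2}. Second, even granting a suitable subsolution De Giorgi lemma, you only have the smallness information at a \emph{single} time slice; Lemma~\ref{lem-6-3} propagates \emph{largeness} of supersolutions forward in time, not smallness of solutions, and there is no available tool in the paper to fill a full parabolic cylinder from one bad slice.

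The paper's proof is entirely different and avoids all of this: it is a \emph{direct} (not contradiction) argument based on the integral Harnack inequality, Theorem~\ref{thm-1-5}. Applying Theorem~\ref{thm-1-5} to $v$ on $\widetilde Q_{r/2}(\delta\theta_*)\subset\widetilde Q_r(\delta\theta_*)$ and evaluating the left side at $(\bar x,0)$ gives, for \emph{every} $t\in[-\delta\theta_*r^p,\delta\theta_*r^p]$,
\[
(1-\tau_*)^{-\beta q}=v^q(\bar x,0)\le \frac{\gamma}{(\delta\theta_*r^p)^{N/\lambda}}\Bigl(\int_{K_r(\bar x)}v^q(x,t)\,dx\Bigr)^{p/\lambda}+\gamma\,\delta^{(p-N)/\lambda}(1-\tau_*)^{-\beta q},
\]
where the tail and power terms are absorbed using $p>N$ and the choice $\delta^{(p-N)/\lambda}\le\tfrac12$. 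This yields a \emph{lower} bound $\int_{K_r(\bar x)}v^q(\cdot,t)\,dx\ge c_*(1-\tau_*)^{-\beta q}r^N$ uniformly in $t$. Now split the integral over $[v<\bar c(1-\tau_*)^{-\beta}]$ and $[v\ge\bar c(1-\tau_*)^{-\beta}]$; the first piece is at most $\bar c^{\,q}(1-\tau_*)^{-\beta q}|K_r|$, and on the second piece one uses the sup-bound of Lemma~\ref{lem-7-1}. Choosing $\bar c$ small so the first piece is absorbed leaves a lower bound on the measure of the super-level set, which is exactly the claim. The key tool you are missing is Theorem~\ref{thm-1-5}: it converts the pointwise information $v(\bar x,0)=(1-\tau_*)^{-\beta}$ directly into integral information at \emph{every} time level, making both the contradiction setup and the time-propagation step unnecessary.
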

\begin{proof}
By using Theorem \ref{thm-1-5} to function $v$ over the cylinder $\widetilde{Q}_{\frac{1}{2} r}(\delta \theta_*) \subset \widetilde{Q}_r(\delta \theta_*)$ yields
\begin{align*}
(1-\tau_*)^{-\beta q}  =&v^q(\bar{x}, 0) \leq \sup_{K_{\frac{1}{2} r}(\bar{x})} v^q(\cdot,0) \\
 \leq& \frac{\gamma}{\left(\delta \theta_* r^p\right)^{\frac{N}{\lambda}}}\left(\int_{K_r(\bar{x})} v^q(x,t)\,dx \right)^{\frac{p}{\lambda}}+\gamma(\delta \theta_*)^{\frac{q}{q+1-p}}+\gamma\delta \theta_*+\gamma(\delta \theta_*)^{\frac{p-N}{\lambda}} \\
 \leq&\frac{\gamma}{\left(\delta \theta_* r^p\right)^{\frac{N}{\lambda}}}\left(\int_{K_r(\bar{x})} v^q(x,t)\,dx\right)^{\frac{p}{\lambda}}+\gamma \delta^{\frac{q}{q+1-p}}\left(1-\tau_*\right)^{-\beta q}\\
&+\gamma\delta(1-\tau_*)^{-\beta(q+1-p)}+\gamma \delta^{\frac{p-N}{\lambda}}(1-\tau_*)^{-\beta(q+1-p)\frac{p-N}{\lambda}}\\
\leq&\frac{\gamma}{\left(\delta \theta_* r^p\right)^{\frac{N}{\lambda}}}\left(\int_{K_r(\bar{x})} v^q(x,t)\,dx\right)^{\frac{p}{\lambda}}+\gamma\delta^{\frac{p-N}{\lambda}}(1-\tau_*)^{-\beta q}
\end{align*}
for all $t\in\left[-\delta\theta_*r^p,\delta\theta_*r^p\right]$. In the above display, we also employed the definition of $\theta_*$ and the fact $0<(p-N)/\lambda<1$. With taking $\delta$ such that $\delta^{\frac{p-N}{\lambda}}\leq\frac{1}{2}$, we have
\begin{align*}
(1-\tau_*)^{-\beta q}\leq \gamma\left(\int_{K_r(\bar{x})} v^q(x,t)\,dx\right)^{\frac{p}{\lambda}},
\end{align*}
where $\gamma$ depends only on $N, p, s, q, \Lambda$ and $\|u\|_{L^\infty(\mathbb{R}^N\times(0,T))}$, since $\theta_* r^p=c$ depends only on $p$ and $q$.
In addition, we deduce from Lemma \ref{lem-7-1} that
\begin{align*}
 \int_{K_r(\bar{x})} v^q(x,t)\,dx=&\int_{K_r(\bar{x})} v^q(x, t) \chi_{\left[v<\bar{c}(1-\tau_*)^{-\beta}\right]}\,dx+\int_{K_r(\bar{x})} v^q(x,t) \chi_{\left[v \geq \bar{c}(1-\tau_*)^{-\beta}\right]}\,dx \\
\leq&  \bar{c}^q(1-\tau_*)^{-\beta q}(2r)^N+\gamma^q(1-\tau_*)^{-\beta q}|[v \geq \bar{c}\left(1-\tau_*\right)^{-\beta}] \cap K_r(\bar{x})|
\end{align*}
with $\bar{c}\in(0,1)$ to be specified. In light of the convexity of $s^{\frac{p}{\lambda}}$, we can derive
\begin{align*}
(1-\tau_*)^{-\beta q} \leq & \gamma \left[\bar{c}^q(1-\tau_*)^{-\beta q}(2 r)^N\right]^{\frac{p}{\lambda}}+\gamma  (1-\tau_*)^{-\beta q \frac{p}{\lambda}}\left|\left[v \geq \bar{c}(1-\tau_*)^{-\beta}\right] \cap K_r(\bar{x})\right|^{\frac{p}{\lambda}} \\
\leq & \gamma \bar{c}^{q \frac{p}{\lambda}}(1-\tau_*)^{-\beta q}+\gamma  (1-\tau_*)^{-\beta q \frac{p}{\lambda}}\left|\left[v \geq \bar{c}(1-\tau_*)^{-\beta}\right] \cap K_r(\bar{x})\right|^{\frac{p}{\lambda}}.
\end{align*}
Fixing $\bar{c}$ such that $\gamma\bar{c}^{q\frac{p}{\lambda}}=\frac{1}{2}$, we obtain
\begin{align*}
(1-\tau_*)^{-\beta q} \leq \gamma(1-\tau_*)^{-\beta q \frac{p}{\lambda}}\left|\left[v \geq \bar{c}(1-\tau_*)^{-\beta}\right] \cap K_r(\bar{x})\right|^{\frac{p}{\lambda}}
\end{align*}
with $\gamma$ depending only on $N,p,s,q,\Lambda$ and $\|u\|_{L^\infty(\mathbb{R}^N\times(0,T))}$. Thus, we complete the proof based on the definitions of $r,\beta$ and $\lambda$.
\end{proof}

The next pointwise estimate is a direct consequence of Proposition \ref{pro-6-1} and Lemma \ref{lem-7-2}.

\begin{lemma}
\label{lem-7-3}
Let $v$ be defined as in \eqref{7.1}. Let $p>N$ and $0<p-1<q<p^2-1$. There exist constants $\eta, \delta \in(0,1)$ only depending on $N, p, s, q, \Lambda, \|u\|_{L^\infty(\mathbb{R}^N\times(0, T))}$, such that
\begin{align*}
v \geq \eta(1-\tau_*)^{-\beta} \quad \text { in } K_{2 r}(\bar{x}) \times\left[-\tfrac{1}{2} \delta \theta_* r^p, \delta \theta_* r^p\right].
\end{align*} 
\end{lemma}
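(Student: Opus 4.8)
The plan is to feed the measure lower bound of Lemma~\ref{lem-7-2} into the expansion of positivity, Proposition~\ref{pro-6-1}, applied to the rescaled function $v$. First I would record that, by the change of variables in \S\ref{sec7}, $v$ is a bounded, continuous, nonnegative weak solution to \eqref{7.2} on $\widetilde Q_8$ (in particular a weak supersolution), that the bounds on $\widetilde K$ recorded there together with $\rho\le1$ place \eqref{7.2} within the scope of Proposition~\ref{pro-6-1} with constants depending only on $N,p,s,q,\Lambda$, and that the cubes and time intervals below all lie inside $\widetilde Q_8$: indeed $r=c^{1/p}(1-\tau_*)\le c^{1/p}<\tfrac12$ and $\bar x\in K_{\tau_*}\subset K_1$ give $K_{8r}(\bar x)\subset K_5\subset K_8$, while all relevant time increments are bounded by a fixed multiple of $c<1$.

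Next I would fix the level $M:=\bar c(1-\tau_*)^{-\beta}$ and compute, using $\beta(q+1-p)=p$, $r^p=c(1-\tau_*)^p$ (from \eqref{7.6}) and $\theta_*=(1-\tau_*)^{-p}$, that
\[
M^{q+1-p}r^p=\bar c^{\,q+1-p}\,\theta_*r^p=\bar c^{\,q+1-p}c .
\]
Hence one application of Proposition~\ref{pro-6-1} with cube radius $r$ and level $M$ has natural time scale $\kappa\,\theta_*r^p$, where $\kappa:=\delta_0\bar c^{\,q+1-p}$ and $\delta_0$ denotes the constant $\delta$ produced by Proposition~\ref{pro-6-1}; it depends only on $N,p,s,q,\Lambda$ and the density $\alpha$ of Lemma~\ref{lem-7-2}, hence only on $N,p,s,q,\Lambda,\|u\|_{L^\infty(\mathbb{R}^N\times(0,T))}$, and in particular is independent of $\bar c$. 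Since the conclusion of Lemma~\ref{lem-7-2} only improves when $\bar c$ is decreased, I would shrink $\bar c$ once and for all so that $\kappa<\delta_1$, where $\delta_1$ is the constant $\delta$ of Lemma~\ref{lem-7-2}.

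Now, for every base time $t_0\in[-\delta_1\theta_*r^p,\delta_1\theta_*r^p]$, Lemma~\ref{lem-7-2} supplies hypothesis \eqref{6.1} at $t_0$, so Proposition~\ref{pro-6-1} yields
\[
v\ \ge\ \eta_0\bar c(1-\tau_*)^{-\beta}\qquad\text{a.e. in } K_{2r}(\bar x)\times\bigl(t_0+\tfrac12\kappa\theta_*r^p,\ t_0+\kappa\theta_*r^p\bigr],
\]
with $\eta_0$ the constant $\eta$ of Proposition~\ref{pro-6-1}. Letting $t_0$ range over a countable dense subset of $[-\delta_1\theta_*r^p,\delta_1\theta_*r^p]$ and taking the union, the time slices fill $\bigl((-\delta_1+\tfrac12\kappa)\theta_*r^p,\ (\delta_1+\kappa)\theta_*r^p\bigr)$; since $\kappa<\delta_1$ this interval contains $[-\tfrac12\delta\theta_*r^p,\delta\theta_*r^p]$ as soon as $\delta\le 2\delta_1-\kappa$, so $\delta:=\delta_1/2$ works. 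Because $v$ is continuous, the a.e.\ bound becomes pointwise on this closed set, and the lemma follows with $\eta:=\eta_0\bar c$ and $\delta:=\delta_1/2$.

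The only genuine obstacle is a bookkeeping one: reconciling the forward-only nature of the expansion of positivity with the two-sided time interval in the statement. This forces one to anchor the expansion at sufficiently negative base times, which is possible precisely because Lemma~\ref{lem-7-2} delivers its measure bound uniformly over a symmetric time interval and because the single-step time scale $\kappa\theta_*r^p$ can be made strictly shorter than that interval by the one-time shrinking of $\bar c$. A secondary technical point is the passage from ``a.e.\ on each time slice'' to ``a.e.\ on the full cylinder'', which is handled by using a countable dense set of base times and then invoking the continuity of $v$.
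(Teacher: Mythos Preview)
Your argument is correct and is exactly the fleshing-out the paper has in mind when it says Lemma~\ref{lem-7-3} is ``a direct consequence of Proposition~\ref{pro-6-1} and Lemma~\ref{lem-7-2}''. The only additional content you supply beyond that one-line indication is the bookkeeping that makes the time scales compatible: freezing $\alpha$ and $\delta_1$ from Lemma~\ref{lem-7-2}, then shrinking $\bar c$ (which only enlarges the super-level set, so $\alpha$ persists) so that the single-step time scale $\kappa\theta_*r^p=\delta_0\bar c^{\,q+1-p}\theta_*r^p$ of Proposition~\ref{pro-6-1} is strictly shorter than the interval on which Lemma~\ref{lem-7-2} supplies the measure hypothesis, and finally sliding the base time $t_0$ to cover the two-sided interval in the statement. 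This is the natural way to combine the two ingredients, and the paper offers no alternative route.
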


In the next step, we expand the pointwise positivity of $v$ defined as in \eqref{7.1} to  $K_2(\bar{x})$, and thus justify \eqref{7.3}. This proof proceeds by a comparison argument.
\subsection{A comparison argument}
We give the following initial-boundary value problem
\begin{align}
\label{7.10}
\left\{\begin{array}{cl}
\partial_t w^q-\mathrm{div}(|\nabla w|^{p-2} \nabla w)+\mathcal{L} w=0 & \text { in } K_4(\bar{x}) \times(-\sigma, 1], \\
w=0 & \text { in } \mathbb{R}^N\backslash K_4(\bar{x}) \times(-\sigma, 1], \\
w^q(\cdot,-\sigma)=\eta(1-\tau_*)^{-N} \chi_{K_{2 r}(\bar{x})}(\cdot) & \text { in } K_4(\bar{x}),
\end{array}\right.
\end{align}
where $\mathcal{L}$ is given by \eqref{1.2}, $\eta$ is defined as in Lemma \ref{7.3}, $\sigma \in\left(0, \frac{1}{2} \delta c\right)$ to be determined and $c$ is defined as in \eqref{7.5}.

\begin{lemma}
\label{lem-8-4}
Let $0<p-1<q$ and $p>N$. Let $v$ defined as in \eqref{7.1} be a nonnegative weak solution to \eqref{7.2} and $w$ be a nonnegative weak solution to \eqref{7.10}. Then we have $w \leq v$ a.e. in $K_4(\bar{x}) \times[-\sigma, 1]$.
\end{lemma}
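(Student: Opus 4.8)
The plan is to apply the comparison principle (Proposition \ref{pro-3-4}) on the cylinder $K_4(\bar{x})\times(-\sigma,1]$, treating $v$ as the nonnegative weak supersolution (indeed solution) of \eqref{7.2} on the whole space and $w$ as the solution of the Cauchy--Dirichlet problem \eqref{7.10}. The comparison is legitimate only if the hypotheses of Proposition \ref{pro-3-4} are met, namely that $w$ lies below $v$ both on the lateral boundary $\mathbb{R}^N\setminus K_4(\bar{x})\times(-\sigma,1]$ and at the initial time slice $t=-\sigma$; the conclusion $w\le v$ a.e. in $K_4(\bar{x})\times[-\sigma,1]$ then follows verbatim from that proposition (after translating the time origin to $-\sigma$, which only changes the length of the time interval). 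So the work reduces to verifying these two inequalities.

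First I would check the lateral boundary: by definition $w=0$ on $\mathbb{R}^N\setminus K_4(\bar{x})\times(-\sigma,1]$, while $v$ is nonnegative everywhere on $\mathbb{R}^N\times(-8^p,8^p)$ because $v$ was obtained in \eqref{7.1} by rescaling the nonnegative solution $u$. Hence $w\le v$ holds trivially outside $K_4(\bar{x})$. Second I would check the initial datum: the initial value of $w$ is $w^q(\cdot,-\sigma)=\eta(1-\tau_*)^{-N}\chi_{K_{2r}(\bar{x})}$, i.e. $w(\cdot,-\sigma)=\eta^{1/q}(1-\tau_*)^{-N/q}\chi_{K_{2r}(\bar{x})}$; since the rescaled exponent $\beta=p/(q+1-p)$ and $N$ are related through the choice in \eqref{7.10} (the exponent $-N$ in the initial datum is chosen precisely so that $\eta^{1/q}(1-\tau_*)^{-N/q}$ does not exceed $\eta(1-\tau_*)^{-\beta}$ for $\tau_*$ near $1$, after possibly shrinking $\eta$), Lemma \ref{lem-7-3} gives $v(\cdot,t)\ge \eta(1-\tau_*)^{-\beta}$ on $K_{2r}(\bar{x})$ for all $t\in[-\tfrac12\delta\theta_*r^p,\delta\theta_*r^p]$, and in particular at $t=-\sigma$ provided $\sigma\in(0,\tfrac12\delta\theta_*r^p)=(0,\tfrac12\delta c)$, which is exactly the restriction imposed on $\sigma$ in \eqref{7.10}. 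Thus $v(\cdot,-\sigma)\ge w(\cdot,-\sigma)$ a.e. in $K_{2r}(\bar{x})$, and outside $K_{2r}(\bar{x})$ the initial datum of $w$ vanishes while $v\ge 0$, so $v(\cdot,-\sigma)\ge w(\cdot,-\sigma)$ a.e. in $K_4(\bar{x})$.

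With both boundary comparisons in hand, Proposition \ref{pro-3-4} applies directly (taking there ``$w$'' $=v|_{K_4(\bar{x})\times(-\sigma,1]}$ as the ambient supersolution of \eqref{1.1} in the rescaled geometry and ``$v$'' $=w$ as the solution of the Cauchy--Dirichlet problem with initial datum dominated by the supersolution), and yields $w\le v$ a.e. in $K_4(\bar{x})\times[-\sigma,1]$. I would also remark that the rescaled kernel $\widetilde K$ satisfies the same structural bounds as $K$ up to the harmless factor $\rho^{p-sp}$ with $\rho\in(0,1]$, so that all the results of Section \ref{sec3} apply to \eqref{7.2} and \eqref{7.10} without change.

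The main obstacle, and the only genuinely non-routine point, is the bookkeeping between the two normalizations of the initial datum: Lemma \ref{lem-7-3} controls $v$ from below by $\eta(1-\tau_*)^{-\beta}$, whereas \eqref{7.10} prescribes $w^q(\cdot,-\sigma)=\eta(1-\tau_*)^{-N}\chi_{K_{2r}(\bar{x})}$, so one must confirm that $\eta^{1/q}(1-\tau_*)^{-N/q}\le \eta(1-\tau_*)^{-\beta}$ on $K_{2r}(\bar{x})$ — equivalently $\eta^{(q-1)/q}\le (1-\tau_*)^{N/q-\beta}$ — which, because $1-\tau_*\le 1$, holds once the exponent $N/q-\beta$ is nonpositive (true in the regime $p>N$, $0<p-1<q<p^2-1$ after noting $\beta=p/(q+1-p)$), and otherwise is arranged by replacing $\eta$ with a smaller constant, still depending only on the admissible data. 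Once this elementary comparison of the two initial profiles is recorded, the rest is an immediate citation of Proposition \ref{pro-3-4}.
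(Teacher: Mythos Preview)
Your approach is essentially the same as the paper's: verify the lateral comparison $w=0\le v$ from nonnegativity, verify the initial comparison on $K_{2r}(\bar{x})$ via Lemma~\ref{lem-7-3} together with the key exponent relation $\beta q>N$ (equivalently $N/q-\beta<0$, which indeed follows from $p>N$ and $q>p-1$), and then invoke Proposition~\ref{pro-3-4}. Your discussion of the $\eta$-bookkeeping is in fact more explicit than the paper's one-line appeal to $\beta q>N$; the adjustment ``replace $\eta$ by a smaller constant'' (effectively $\eta\mapsto\eta^q$) is exactly what makes the initial comparison work uniformly in $q$, and is harmless for the rest of the argument.
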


\begin{proof}
Since $v$ is a nonnegative solution to \eqref{7.2}, we know that $w \leq v$ in $\mathbb{R}^N\backslash K_4(\bar{x}) \times(-\sigma, 1]$. Thanks to the assumptions on $p,q$, we have $\beta q>N$, this combines with Lemma \ref{lem-7-3} gives $w(\cdot,-\sigma)\leq v(\cdot,-\sigma)$ a.e. in $K_4(\bar{x})$, and thus we derive from Proposition \ref{pro-3-4} that  $w \leq v$ a.e. in $K_4(\bar{x}) \times[-\sigma, 1]$.
\end{proof}

Finally, we end this section with proving Theorem \ref{thm-1-6}.

\begin{proof}[\textbf{Proof of Theorem \ref{thm-1-6}}]
In view of Lemma \ref{lem-8-4}, we only need to obtain the pointwise positivity of $w$ as below,
\begin{align}
\label{7.11}
w \geq \gamma^{-1} \quad \text { in } K_2(\bar{x}) \times\left[-\tfrac{1}{4} \sigma, \tfrac{1}{4}\sigma\right]
\end{align}
with $\gamma>1$ and $\sigma \in\left(0, \frac{1}{2} \delta c\right)$ depending only on $N,p,s,q,\Lambda,\|u\|_{L^\infty(\mathbb{R}^N\times(0,T))}$. In order to get \eqref{7.11}, we utilize Proposition \ref{pro-5-1} over the cylinder $K_2(\bar{x}) \times[-\sigma, \sigma]$. Then for all $t \in[-\sigma, \sigma]$, there holds
\begin{align}
\label{7.12}
\int_{K_1(\bar{x})} w^q(x,-\sigma)\,dx &\leq \gamma \int_{K_2(\bar{x})} w^q(x,t)\,dx +\gamma \sigma^{\frac{q}{q+1-p}} +\gamma\sigma\left[\mathrm{Tail}_\infty(w;\bar{x},1;-\sigma,\sigma)\right]^{p-1}\nonumber\\
&\leq  \gamma \int_{K_2(\bar{x})} w^q(x,t)\,dx+\gamma\sigma,
\end{align}
where we used $\sigma\in(0,1)$ and $p>1$. Then, by virtue of the initial data in \eqref{7.10}, we evaluate the left-hand side of \eqref{7.12} as
\begin{align}
\label{7.13}
\int_{K_1(\bar{x})} w^q(x,-\sigma)\,dx=\eta(1-\tau_*)^{-N}(4r)^N=2^N \eta\left(1-4^{-\frac{1}{\beta}}\right)^N=:\eta c_0 .
\end{align}
Now, we pick $\sigma$ such that $\gamma\sigma=\frac{1}{2}\eta c_0$, a combination of \eqref{7.12} and \eqref{7.13} leads to
\begin{align*}
\frac{1}{2} \eta c_0 \leq \gamma \int_{K_2(\bar{x})} w^q(x, t)\,dx  \quad \text { for all } t \in[-\sigma, \sigma].
\end{align*}
Applying Theorem \ref{thm-1-5} in $K_2(\bar{x}) \times\left[-\frac{1}{4} \sigma, \frac{1}{4} \sigma\right]\subset K_4(\bar{x}) \times\left[-\sigma, \frac{1}{2} \sigma\right]$ along with the initial data in \eqref{7.10} gives 
\begin{align} 
\label{7.14}
\sup _{K_2(\bar{x}) \times\left[-\frac{1}{4} \sigma, \frac{1}{4} \sigma\right]} w^q &\leq \frac{\gamma}{\sigma^{\frac{N}{\lambda}}}\left(\eta c_0\right)^{\frac{p}{\lambda}}+\gamma \sigma^{\frac{q}{q+1-p}}+\gamma\sigma+\gamma\sigma^{\frac{p-N}{\lambda}}\nonumber\\
&\leq \frac{\gamma}{\sigma^{\frac{N}{\lambda}}}\left(\eta c_0\right)^{\frac{p}{\lambda}}+\gamma\sigma^{\frac{p-N}{\lambda}}=\gamma \sigma\times\sigma^{\frac{p-N}{\lambda}-1}\leq \gamma_1\sigma,
\end{align}
where $\gamma_1$ depends only on $N,p,s,q,\Lambda,\|u\|_{L^\infty(\mathbb{R}^N\times(0,T))}$ due to the choice of $\sigma$. In light of \eqref{7.14}, we derive for all $t \in\left[-\frac{1}{4} \sigma, \frac{1}{4} \sigma\right]$ that
\begin{align}
\label{7.16}
\frac{1}{2} \eta c_0 & \leq \gamma \int_{K_2(\bar{x})} w^q(x,t)\,dx\nonumber\\
&=\gamma \int_{K_2(\bar{x}) \cap[w<b]} w^q(x,t)\,dx +\gamma \int_{K_2(\bar{x}) \cap[w \geq b]} w^q(x,t)\,dx\nonumber\\
& \leq \gamma b^q|K_2|+\gamma \gamma_1 \eta c_0\big|[w(\cdot, t) \geq b]\cap K_2(\bar{x})\big|
\end{align}
with some $b>0$. In fact, we can choose $b$ such that
$\gamma b^q|K_2|=\tfrac{1}{4} \eta c_0$,
which together with \eqref{7.16} indicates 
\begin{align*}
|[w(\cdot, t) \geq b] \cap K_2(\bar{x})| \geq \frac{1}{4\gamma_2} \quad \text { for all } t \in\left[-\tfrac{1}{4} \sigma, \tfrac{1}{4} \sigma\right]
\end{align*}
with $\gamma_2=\gamma\gamma_1>1$ only depending on $N,p,s,q,\Lambda$ and $\|u\|_{L^\infty(\mathbb{R}^N\times(0,T))}$. With the above measure information at hand, the pointwise estimate of the function $w$ follows from Proposition \ref{pro-6-1}, consequently, the estimate of $v$. Finally, we establish the left-hand side Harnack inequality in Theorem \ref{thm-1-6} by the definition of $v$, and by choosing $\gamma,\sigma$ properly.

The argument to obtain Harnack inequality on the right-hand side is standard by using the continuity of weak solutions together with the Harnack inequality on the left-hand side, we refer the readers to \cite[pages $85$--$86$]{BDG23}. Up to now, we have finished the proof.
\end{proof}

\subsection*{Conflict of interest} 
The authors declare that there is no conflict of interest. 

\subsection*{Data availability}
No data was used for the research described in the article.

\subsection*{Acknowledgments}
This work was supported by the National Natural Science Foundation of China (No. 12071098) and the Fundamental Research Funds for the Central Universities (No. 2022FRFK060022). The paper was done when the second author visited Department of Mathematics, University of Craiova.  She would like to thank its hospitality during her stay and thank the China Scholarship Council (No. 202306120202). The research of V.D.~R\u adulescu was supported by the grant ``Nonlinear Differential Systems in Applied Sciences" of the Romanian Ministry of Research, Innovation and Digitization, within PNRR-III-C9-2022-I8/22.

\end{document}